\definecolor{black}{rgb}{0.0, 0.0, 0.0}
\definecolor{red}{rgb}{1.0, 0.5, 0.5}
\numberwithin{equation}{section}
\title[A revisit to the pressureless Euler--Navier--Stokes system]{A revisit to the pressureless Euler--Navier--Stokes system in the whole space and its optimal temporal decay}
\author[Choi]{Young-Pil Choi}
\address[Young-Pil Choi]{\newline Department of Mathematics\newline
Yonsei University, 50 Yonsei-Ro, Seodaemun-Gu, Seoul 03722, Republic of Korea}
\email{ypchoi@yonsei.ac.kr}
\author[Jung]{Jinwook Jung}
\address[Jinwook Jung]{\newline Department of Mathematics and Institute of Pure and Applied Mathematics \newline Jeonbuk National University, 567 Baekje-daero, Deokjin-gu, Jeonju-si, Jeollabuk-do 54896, Republic of Korea}
\email{2jwook12@gmail.com}
\author[Kim]{Junha Kim}
\address[Junha Kim]{\newline School of Mathematics
\newline Korea Institute for Advanced Study, Seoul 02455, Republic of Korea}
\email{junha02@kias.re.kr}
\numberwithin{equation}{section}
\newtheorem{theorem}{Theorem}[section]
\newtheorem{lemma}{Lemma}[section]
\newtheorem{corollary}{Corollary}[section]
\newtheorem{proposition}{Proposition}[section]
\newtheorem{remark}{Remark}[section]
\newcommand{\R}{\mathbb R}
\newcommand{\N}{\mathbb N}
\newcommand{\ls}{\lesssim}
\newcommand{\mc}{\mathcal C}
\newcommand{\bq}{\begin{equation}}
\newcommand{\eq}{\end{equation}}
\newcommand{\e}{\varepsilon}
\newcommand{\lt}{\left}
\newcommand{\rt}{\right}
\newcommand{\pa}{\partial}
\newcommand{\md}{\mathcal{D}}
\newcommand{\sfI}{\mathsf{I}}
\newcommand{\sfJ}{\mathsf{J}}
\newcommand{\sfK}{\mathsf{K}}
\newcommand{\sfL}{\mathsf{L}}
\newcommand{\mm}{\mathcal M}
\newcommand{\intr}{\int_{\R^d}}
\newcommand\rwhat[1]{%
\savestack{\tmpbox}{\stretchto{%
  \scaleto{%
    \scalerel*[\widthof{\ensuremath{#1}}]{\kern-.6pt\bigwedge\kern-.6pt}%
    {\rule[-\textheight/2]{1ex}{\textheight}}
  }{\textheight}%
}{0.5ex}}%
\stackon[1pt]{#1}{\tmpbox}%
}
\begin{document}
\allowdisplaybreaks

\date{\today}

\keywords{Multiphase flows, pressureless Euler equations, incompressible Navier--Stokes equations, global well-posedness, optimal temporal decay.}

\begin{abstract} In this paper, we present a refined framework for the global-in-time well-posedness theory for the pressureless Euler--Navier--Stokes system and the optimal temporal decay rates of certain norms of solutions. Here the coupling of two systems, pressureless Euler system and incompressible Navier--Stoke system, is through the drag force. We construct the global-in-time existence and uniqueness of regular solutions for the pressureless Euler--Navier--Stokes system without using a priori large time behavior estimates. Moreover, we seek for the optimal Sobolev regularity for the solutions. Concerning the temporal decay for solutions, we show that the fluid velocities exhibit the same decay rate as that of the heat equations. In particular, our result provides that the temporal decay rate of difference between two velocities, which is faster than the fluid velocities themselves, are at least the same as the second-order derivatives of fluid velocities. 
\end{abstract}

\maketitle \centerline{\date}


%
%
%
%
\section{Introduction}\label{sec:1}
\setcounter{equation}{0}
In this work, we revisit the global-in-time well-posedness and the large time behavior estimates for a coupled hydrodynamic system in the whole space. Our main system consists of the pressureless Euler equations and incompressible Navier--Stokes equations (in short, pressureless ENS system), which are coupled through the drag force, also often called friction force. More precisely, let $\rho = \rho(x,t)$ and $u = u(x,t)$ be the density and velocity for the pressureless Euler fluid flow, respectively, and let $v = v(x,t)$ be the velocity for the incompressible Navier--Stokes fluid flow. Then our main system reads as
\begin{align}
\begin{aligned}\label{A-1}
&\partial_t \rho + \nabla_x \cdot (\rho u) = 0,\quad x \in \R^d, \ t > 0,\\
&\partial_t (\rho u) + \nabla_x \cdot (\rho u \otimes u) = -\rho( u-v),\\
&\partial_t v + (v \cdot\nabla_x )v + \nabla_x p -\Delta_x v = \rho(u-v),\\
&\nabla_x \cdot v =0,
\end{aligned}
\end{align}
subject to initial data:
\bq\label{A-1_ini}
(\rho(x,0), u(x,0), v(x,0)) = (\rho_0(x), u_0(x), v_0(x)), \quad x\in\R^d.
\eq

The system \eqref{A-1} can be formally derived from the Vlasov--Navier--Stokes system, which describes the behavior of a large cloud of particles interacting with the incompressible fluid \cite{OR81, Wi58}, in the case of mono-kinetic particle distributions, see \cite{CJ21} for more details. On the other hand, if the pressureless Euler equations in \eqref{A-1} are replaced by the isothermal Euler equations, i.e., the pressure term $\nabla_x \rho$ is added, then the resulting system, Euler--Navier--Stokes system, can be rigorously derived from Vlasov--Fokker--Planck--Navier--Stokes system by considering a singular parameter in the nonlinear Fokker--Planck operator \cite{CCK16}. For the interactions with compressible fluid, the derivation of Euler--Navier--Stokes system, can also be rigorously derived \cite{CJpre}. We also refer to \cite{Gou01, GJV04, GJV04_2, MV08} for the hydrodynamic limits for Vlasov--Navier--Stokes system in various regimes. 

The main purpose of the current work is to improve the previous work \cite{CJ21}, where the global Cauchy problem for the system \eqref{A-1} is discussed. Under suitable smallness and regularity assumptions on the initial data, the global-in-time existence and uniqueness of classical solutions to the system \eqref{A-1} are established in \cite{CJ21} when $d\geq 3$. Since the pressureless Euler equations in \eqref{A-1} may develop a finite-time formation of singularities, in order to prevent the possible finite-time breakdown of smoothness of solutions, {\it a priori} large time behavior estimate of solutions is appropriately used in \cite{CJ21}, see also \cite{CK16, HKK14} for the case of periodic domain. In particular, the large time behavior estimate gives the time-integrability of $\|(\nabla_x u)(\cdot,t)\|_{H^s}$, and thus this combined with classical energy estimates provides the uniform-in-time bound on $\|\rho(\cdot,t)\|_{H^s}$. To be more specific, the following temporal decay estimate of solutions is obtained in \cite{CJ21}:
\bq\label{intro1}
\|u(\cdot,t)\|_{H^{s+1}(\R^d)}^2 + \|v(\cdot,t)\|_{H^s(\R^d)}^2 \leq \frac{C}{(1+t)^{\alpha}} \quad \forall\,t>0,
\eq
for any $\alpha \in (0,d/2)$ and some $C>0$ independent of $t$. Then we find
\[
\int_0^t \|(\nabla_x u)(\cdot,\tau)\|_{H^s(\R^d)}\,d\tau \leq \lt(\int_0^t \|(1+\tau)^\alpha(\nabla_x u)(\cdot,\tau)\|_{H^s(\R^d)}^2\,d\tau \rt)^{1/2} \lt(\int_0^t (1+\tau)^{-2\alpha}\,d\tau \rt)^{1/2},
\]
and the first term on the right-hand side of the above is bounded uniformly in time due to the decay estimate \eqref{intro1}. The second term is uniformly bounded if $\alpha > 1$, and thus this imposes the condition on the dimension $d >2$. It is worth noticing that we cannot obtain decay estimates for the density $\rho$ in time due to the absence of pressure. In the presence of pressure, i.e., Euler--Navier--Stokes system, as established in \cite{C15, C16}, it is not necessarily required to use the a priori estimate of large time behavior of solutions to have the global classical solutions. 

Main contributions of the present work are two-fold. First, we establish a refined framework for the global-in-time existence and uniqueness of regular solutions for the system \eqref{A-1} without employing {a priori} large time behavior estimates for velocities $u$ and $v$. We also seek for the optimal Sobolev regularity for the triplet $(\rho, u,v)$. In \cite{CJ21}, the initial data $(\rho_0, u_0, v_0)$ lie in $H^s(\R^d)\times H^{s+2}(\R^d)\times H^{s+1}(\R^d)$ with $s \ge 2[d/2]+1$, where $[\,\cdot\,]$ stands for the floor function. We relax this regularity condition so that the initial data lies in $H^s(\R^d)\times H^m(\R^d) \times H^s(\R^d)$ where $m>d/2+1$ and $s \in (m-2,m-1]$. Note that $s$ can be chosen as $s = d/2 - 1 + \epsilon$ for any sufficiently small $\epsilon > 0$, which is almost the critical $L^2$-regularity for the global well-posedness of the incompressible Navier-Stokes system \cite{FK64}. Moreover, $\rho$ and $\nabla v$ may not be bounded with this regularity. We would also like to point out that $L^\infty$ bound on the gradient of $u$ appears in the usual $L^p$ estimate of $\rho$, and hence our solution space for $u$ is also quite natural due to the classical Sobolev inequality $\|\nabla u\|_{L^\infty} \leq C\|\nabla u \|_{H^{m-1}}$ with $m > d/2 + 1$. Thus we can say that the solution we constructed has almost the critical $L^2$ regularity if $m$ and $s$ are chosen very close to $\frac d2+1$ and $m-2$, respectively. Furthermore, we require lower Sobolev regularities of solutions compared to the result of \cite{CJ21} to establish the global-in-time well-posedness for the system \eqref{A-1}.  For the critical regularity in Besov spaces, we refer to \cite{ZCLZ23}. 

For the desired estimates, we fully use the dissipative effect from the viscosity in the Navier--Stokes equations to have the desired regularity of solutions to the pressureless Euler equations through the drag forcing term, which is the coupling term. Precisely, by transforming the commutator estimate to suit our setting, we control $H^s$-norm of $\rho$ by time integrals of some $L^p$ norms of derivatives of $u$ and $v$ and $u-v$. For higher-order derivatives of $u$, we consider the drag forcing term, which plays a role as a relative damping, in the pressureless Euler equations as the linear damping in $u$ and have the dissipation rate for $u$. On the other hand, we make use of a delicate analysis of several Sobolev norms of $v$ based on the mild form of solution via Fourier transform, together with a deep understanding of the structure of the coupled system, to control the aforementioned terms. A proper combination of those estimates enables us to have the uniform-in-time bound for solutions $(\rho, u, v)$ in $H^s(\R^d)\times H^m(\R^d)\times H^s(\R^d)$. In regard to the uniqueness of solutions in our solution space, the low regularity of solutions $\rho$ causes some difficulties in estimating the stability of solutions for the system \eqref{A-1} in the usual $L^2$ space. In order to handle it, we employ a negative Sobolev space for the estimate of $\rho$ and establish the stability of solutions, see Section \ref{ssec_sta} for details. Clearly, if $m$ is large enough so that $\rho$ belongs to a sufficiently regular space, then the stability of solutions $\rho$ can be obtained in the $L^2$ space, see Remark \ref{rmk_l2_sta}. 

Our second main result concerns a better decay rate of convergences of solutions compared to the previous work \cite{CJ21, GWZ23}. We first show that the estimate \eqref{intro1} can reach the case $\alpha = d/2$, and moreover, $u$ and $v$  exhibit a temporal decay of order $t^{-(\frac d4 +\frac k2)}$ in $\dot{H}^k$ when $k \le \min\{m, \frac d2+2\}$, equivalent to the decay rate of the heat equation. Furthermore, we find out that the difference $(u-v)$ exhibits a temporal decay of order $t^{-(\frac d4 +\frac\theta 2 +1)}$ in $\dot{H}^{\theta}$ when $\theta \le \min\{m-2, \frac d2\}$, i.e., it decays to zero as fast as $\Delta v$. The main idea of proof is a bit similar to that of the existence proof. We introduce Sobolev norms of velocities $u$, $v$, and relative velocity $u-v$ with polynomial weights with respect to time and provide some relations among them. The velocity $u$ can be estimated rather readily by taking into account the drag force as the linear damping. Then again, we analyze the velocity $v$ of the Navier-Stokes equations elaborately to have uniform-in-time bound estimates of the aforementioned weighted Sobolev norms. This finally provides the optimal temporal decay rate of solutions, obviously improving the previous result \eqref{intro1}.

%
%
%
\subsection{Notation} For a function $f=f(x)$, $k \in \R$, and $p\in[1,\infty]$, $\|f\|_{W^{k,p}}$ represents the usual $W^{k,p}(\R^d)$-norm, in particular, $H^k(\R^d) = W^{k,2}(\R^d)$. For $\ell \in \R$, we denote $\pa^\ell$ by any partial derivative of order $\ell$ and $\nabla^\ell$ by the $\ell$-th order total derivative. For simplicity, we omit $x$-dependence of differential operators, i.e., $\nabla f:= \nabla_x f$ and $\Delta f := \Delta_x f$. We denote by $C$ a generic positive constant depending only on the norms of the data, but independent of $t$. Finally, $f \ls g$ represents that there exists a positive constant $C>0$ such that $f \leq Cg$.
%
%
%
\subsection{Main result}
We state our main theorems on the global-in-time existence and uniqueness of regular solutions to the system \eqref{A-1} and the optimal temporal decay. 

\begin{theorem}[Global-in-time well-posedness]\label{main_thm1}
	Let $d \in \N$ with $d \geq 2$, $m>\frac {d}{2} + 1$, and $s \in (m-2,m-1]$. Suppose that the initial data $(\rho_0, u_0, v_0)$ satisfy
\begin{itemize}
\item[(i)] $\rho_0(x)>0$ for every $x\in\R^d$ and 
\item[(ii)] $(\rho_0, u_0, v_0) \in (L^1\cap H^s)(\R^d) \times H^m(\R^d) \times (L^1 \cap H^s)(\R^d)$.
\end{itemize}
If
\[
\| \rho_0 \|_{H^s} + \| u_0 \|_{H^m} + \| v_0 \|_{H^{s} \cap L^1} < \e_0
\]
for $\e>0$ sufficiently small,  the Cauchy problem \eqref{A-1}-\eqref{A-1_ini} has a unique global classical solution	
\[
(\rho, u, v) \in \mc([0,+\infty);H^s(\R^d)) \times \mathcal{C}([0,+\infty);H^m(\R^d)) \times \mathcal{C}([0,+\infty);H^s(\R^d)).
\]
Moreover, we have
\[
\| \nabla^2 u \|_{L^{1}(0,\infty;H^{m-2})}  + \| \nabla^2 v \|_{L^{1}(0,\infty;H^{m-2})} + \| \nabla u\|_{L^{1}(0,\infty;L^{\infty})} + \| \nabla v \|_{L^{1}(0,\infty;L^{\infty})} + \| \nabla v \|_{L^2(0,\infty;H^{s})} <\infty.
\]
\end{theorem}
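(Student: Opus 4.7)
The plan is to follow the standard \emph{local existence $+$ uniform a priori estimates $+$ continuation} paradigm, with the crux being that the dissipation from the Navier--Stokes equation and the relative damping from the drag force together compensate for the absence of pressure in the Euler part. First I would rewrite the Euler block in the non-conservative form
\[
\pa_t \rho + u\cdot\nabla\rho + \rho\,\nabla\cdot u = 0,\qquad \pa_t u + (u\cdot\nabla)u = -(u-v),
\]
which is valid as long as $\rho>0$, a property propagated along the characteristic flow. Local well-posedness in $H^s\times H^m\times H^s$ I would obtain by a standard iteration scheme: $(\rho^{n+1},u^{n+1})$ is driven by $u^n$ via the linearized Euler block, while $v^{n+1}$ solves a linear Navier--Stokes equation forced by $\rho^n(u^n-v^n)$; convergence in a slightly weaker norm combined with weak-$*$ compactness yields a local strong solution on some interval $[0,T_*)$.

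The heart of the argument is to construct a nonlinear quantity of the form
\[
\me(t) = \sup_{0\le\tau\le t}\bigl(\|\rho\|_{H^s}^2+\|u\|_{H^m}^2+\|v\|_{H^s\cap L^1}^2\bigr)+\int_0^t \bigl(\|\nabla^2 u\|_{H^{m-2}}+\|\nabla v\|_{H^s}^2+\|u-v\|_{H^{m-1}}^2\bigr)\,d\tau,
\]
and to close the inequality $\me(t)\le \me(0)+C\me(t)^{3/2}$, which, under the smallness assumption on the initial data, propagates $\me(t)\ls\e_0^2$ for all $t>0$ by the usual continuity argument. The three building blocks are: (i) a Kato--Ponce commutator estimate on the transport equation for $\rho$, giving $\tfrac{d}{dt}\|\rho\|_{H^s}^2\ls \|\nabla u\|_{L^\infty}\|\rho\|_{H^s}^2+\|\nabla u\|_{H^s}\|\rho\|_{H^s}\|\rho\|_{L^\infty}$, whose right-hand side is controlled once $\nabla u\in L^1_tL^\infty_x$ (using $\|\nabla u\|_{L^\infty}\ls\|\nabla u\|_{H^{m-1}}$, since $m-1>d/2$); (ii) an $H^m$ energy estimate for $u$ in which the drag is split as $-(u-v)=-u+v$, producing a genuine linear damping $-\|u\|_{H^m}^2$ on the left balanced by the forcing $v$ and the transport commutator $[u\cdot\nabla,\nabla^m]u$; and (iii) for $v$, the mild form
\[
v(t) = e^{t\Delta}v_0 + \int_0^t e^{(t-\tau)\Delta}\bbp\bigl(-(v\cdot\nabla)v+\rho(u-v)\bigr)d\tau,
\]
handled via sharp $L^p$--$L^q$ smoothing estimates for the heat semigroup, exploiting the $L^1$ datum to produce the required low-frequency decay and the parabolic smoothing to gain $\|\nabla v\|_{L^2_tH^s}$.

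The hard part will be extracting $\nabla u\in L^1_tL^\infty_x$ at the borderline regularity $m>d/2+1$ without invoking the a priori temporal decay used in \cite{CJ21}. The resolution is structural: one should simultaneously analyze the relative velocity $w:=u-v$, whose equation reads schematically
\[
\pa_t w + (w\cdot\nabla)u + (v\cdot\nabla)w = -\Delta v + \nabla p - (1+\rho)w,
\]
so that the absolute damping $-(1+\rho)w$ combined with the heat-equation decay of $v$ (which inherits time-integrability from the $L^1$ datum $v_0$) yields the sought-for integrability of $w$ in $H^{m-1}$; then $\nabla u=\nabla w+\nabla v$ together with the embedding $H^{m-1}\hookrightarrow L^\infty$ closes the $\rho$-estimate, and global existence follows by continuation of the local solution.

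For uniqueness, since $s$ may fall below $d/2$ the density $\rho$ is not pointwise bounded, and the $L^2$ difference of two densities cannot absorb the $\rho\,\nabla\cdot u$-type terms appearing in the stability estimate. Following the strategy flagged in the introduction, I would estimate the $\rho$-difference in a negative Sobolev space $H^{-\sigma}$ for some small $\sigma>0$ (so that duality against the $H^\sigma$ regularity of $u$ goes through), coupled with the usual $L^2$ estimate for the velocity differences via Grönwall's inequality, to conclude uniqueness in the stated solution class.
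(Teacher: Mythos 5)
Your overall architecture (local existence plus a priori bounds closed by continuation, the drag treated as linear damping for $u$, the mild formulation and heat-semigroup smoothing for $v$ exploiting the $L^1$ datum, and a negative Sobolev norm for the $\rho$-stability) coincides with the paper's. However, two of your key steps break down at the stated regularity. First, your $\rho$-estimate invokes $\|\rho\|_{L^\infty}$: the term $\|\nabla u\|_{H^s}\|\rho\|_{H^s}\|\rho\|_{L^\infty}$ is what the standard Kato--Ponce commutator produces, but since $s$ may be as low as $d/2-1+\epsilon$, membership $\rho\in H^s$ does \emph{not} give $\rho\in L^\infty$, and no $L^\infty$ bound on $\rho$ is propagated anywhere in your scheme. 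The paper must prove a tailored commutator inequality (estimate \eqref{comm_est_s}) that charges $\rho$ only with its $H^s$ norm and charges $u$ with $\||\xi|\hat u\|_{L^1_\xi}+\|\nabla^m u\|_{L^2}$, precisely to avoid $\|\rho\|_{L^\infty}$.

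Second, and more seriously, your route to $\nabla u\in L^1(0,\infty;L^\infty)$ does not close. Damping in the equation for $w=u-v$ can at best yield time-integrability of $w$ in spaces where the forcing $-\Delta v$ is time-integrable; since $v$ lives in $H^s$ with $s\le m-1$ and parabolic smoothing only yields $\nabla^2 v\in L^1_tH^{m-2}$, you cannot obtain $w\in L^1_tH^{m-1}$ from this equation (that would require $\Delta v\in L^1_tH^{m-1}$, i.e.\ $v$ with $m+1$ derivatives integrable in time). Even granting $w\in L^1_tH^{m-1}$, the embedding $H^{m-1}\hookrightarrow L^\infty$ controls $w$ itself, not $\nabla w$; bounding $\nabla w$ in $L^\infty$ would need $w\in H^{m}$ or $m>d/2+2$, neither of which is available, and $\nabla v\in L^\infty$ is likewise not given by Sobolev embedding since $\nabla v\in H^{s-1}$ with $s-1<d/2$ (the paper explicitly warns that $\nabla v$ need not be bounded). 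The paper's resolution is structurally different: it damps $\nabla u$ directly via an $L^p$ estimate on the gradient of the $u$-equation followed by $p\to\infty$, reducing everything to $\nabla v\in L^1_tL^\infty$, which is then extracted from $\int_0^T\||\xi|\hat v(t)\|_{L^1_\xi}\,dt$ through a delicate Fourier-side analysis of the Duhamel formula (heat kernel, $L^1$ datum, and a dimension-dependent interpolation for $\|\hat v\|_{L^1}^2$). Without this step, or an equivalent substitute, your Grönwall bound for $\|\rho\|_{H^s}$ is not uniform in time and the continuation argument fails.
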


\begin{remark}
The solution space of $u$ covers all subcritical Sobolev spaces of Euler equations. We also remark that the $L^1$-condition for $v_0$ can be replaced by $L^q$ with any $q \in [1,2)$ in Theorem \ref{main_thm1}. In that case, we need to adjust corresponding estimates in Section \ref{sec:3}. 
\end{remark}

 Next, we discuss the temporal decay estimates.

\begin{theorem}[Temporal decay estimates]\label{main_thm2}
Let the assumptions of Theorem \ref{main_thm1} be satisfied. For $k \in [0,\min\{\frac d2 +2, m\}]$ and $\theta\in [0,\min\{ \frac d2, m-2\}]$, we have
\[
\| \nabla^k u(t)\|_{L^2} + \| \nabla^k  v(t)\|_{L^2}  \le Ct^{-\frac{d}{4}-\frac k2} 
\]
and
\[
\|\nabla^\theta(u-v)(t)\|_{L^2} \le Ct^{-\frac{d}{4}-\frac{\theta}{2}-1}
\]
for $t > 0$. Furthermore, if $m\ge \frac d2+2$ we  also have
\[
 \|\Delta v(t)\|_{L^\infty} + \|(u-v)(t)\|_{L^\infty} \le Ct^{-\frac d2 -1},
\]
and if $m>\frac d2+2$ or  $\| \Delta  u_0\|_{L^\infty}<\infty$ when $m=\frac d2+2$, 
\[
\|\Delta u(t)\|_{L^\infty} \le Ct^{-\frac d2 -1}.
\]
Moreover, for $\ell \in [\frac d2+2, m]$,
\[
\| \nabla^\ell u(t)\|_{L^2} + \| \nabla^\ell  v(t)\|_{L^2} + \|\nabla^{\ell-2}(u-v)(t)\|_{L^2}  \le Ct^{-\frac{d}{2}- 1}  \quad \mbox{for }\ t>0.
\]
\end{theorem}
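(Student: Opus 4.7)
The plan is to carry out a time-weighted energy analysis, using the mild form of the Navier--Stokes equation via Fourier splitting to extract the base-level heat-kernel decay and then bootstrapping up through the derivative orders and finally to the faster decay of $u-v$. I would introduce the weighted energies
$\me_k(t)=\sup_{0\le\tau\le t}(1+\tau)^{\frac d2+k}\bigl(\|\nabla^k u(\tau)\|_{L^2}^2+\|\nabla^k v(\tau)\|_{L^2}^2\bigr)$
for $k\in[0,\min\{\frac d2+2,m\}]$ and an analogous weighted norm with weight $(1+\tau)^{\frac d2+\theta+2}$ for $\|\nabla^\theta(u-v)\|_{L^2}^2$, and aim to show each is bounded uniformly in $t$.

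First I would establish the base estimate $\|v(t)\|_{L^2}\ls t^{-d/4}$ from the mild form
$v(t)=e^{t\Delta}v_0+\int_0^t e^{(t-s)\Delta}\bbp\bigl[\rho(u-v)-v\cdot\nabla v\bigr](s)\,ds,$
using the $L^1\to L^2$ heat-kernel bound $\|e^{t\Delta}f\|_{L^2}\ls t^{-d/4}\|f\|_{L^1}$. The initial piece is immediate from $v_0\in L^1$; for the Duhamel forcing a Fourier-splitting (Schonbek-type) argument together with the uniform-in-time $H^s$ control of $\rho$ from Theorem \ref{main_thm1} and the Cauchy--Schwarz bounds $\|\rho(u-v)\|_{L^1}\le\|\rho\|_{L^2}\|u-v\|_{L^2}$ and $\|v\otimes v\|_{L^1}\le\|v\|_{L^2}^2$ closes the base decay of $v$. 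The decay of $u$ at the base level is then inferred from rewriting the pressureless Euler equation as $\pa_t u+u\cdot\nabla u+u=v$: the linear damping term $+u$ acts as an integrating factor that transfers the decay of $v$ to $u$, the transport being harmless thanks to the $L^1(0,\infty;L^\infty)$ bound on $\nabla u$ from Theorem \ref{main_thm1}.

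For higher derivatives $\nabla^k$ with $k\le\min\{\frac d2+2,m\}$, I would perform weighted $\dot H^k$ energy estimates for $u$ and $v$ jointly: differentiate the equations, test against $\nabla^k u$ and $\nabla^k v$, extract the $\|\nabla^{k+1}v\|_{L^2}^2$ dissipation from the viscosity, handle the commutators via Kato--Ponce-type estimates and the $L^1(0,\infty;W^{1,\infty})$ bound on $u,v$, multiply the resulting differential inequality by $(1+t)^{\frac d2+k}$, and integrate; the previously established base decay enters as input on the right-hand side, and the argument closes $\me_k$ iteratively in $k$. For the faster decay of $\zeta:=u-v$, subtract the momentum equations to obtain
$\pa_t\zeta+(1+\rho)\zeta=-u\cdot\nabla u+v\cdot\nabla v+\nabla p-\Delta v,$
in which the damping $-(1+\rho)\zeta$ (valid since $\rho\ge 0$) transfers the decay of the right-hand side to $\zeta$, and the dominant source $-\Delta v$ supplies the extra two powers of $t^{-1/2}$ compared to $v$ itself; the quadratic terms $u\cdot\nabla u$, $v\cdot\nabla v$ and the pressure gradient (handled through Helmholtz decomposition) decay at least as fast by the previous step. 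A weighted $\dot H^\theta$ energy estimate then delivers the claimed rate for $\nabla^\theta\zeta$.

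The $L^\infty$ rates in the last two displays follow by applying $\|e^{t\Delta}f\|_{L^\infty}\ls t^{-d/4}\|f\|_{L^2}$ to the mild form of $v$, combined with the $L^2$ decays already established; for $\Delta u$ we propagate along the flow of $u$ via the same damping structure as for the base estimate, which is why the hypothesis $m>\frac d2+2$ or $\Delta u_0\in L^\infty$ is required to start the argument. The saturation at $t^{-d/2-1}$ for $\ell\in[\frac d2+2,m]$ reflects that the Duhamel forcing cannot generate decay faster than $t^{-d/2-1}$ in the relevant weighted norms, so the otherwise faster heat-kernel decay $t^{-d/4-\ell/2}$ is capped at this level. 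The principal obstacle is the careful ordering of the bootstrap: the pressureless Euler equation carries no intrinsic dissipation, so every decay of $u$ must come through the drag coupling, and the coupling involves $\rho$ which is only controlled in $L^\infty_tH^s_x$ (not in $L^\infty_{t,x}$); consequently every nonlinear source in the Duhamel/energy analysis must be estimated using only decay rates previously established at an earlier stage of the induction, and the rates of $u$, $v$, and $u-v$ must be bootstrapped in a tightly interleaved fashion.
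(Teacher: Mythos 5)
Your proposal follows essentially the same architecture as the paper's proof: time-weighted supremum norms for $\nabla^k u$, $\nabla^k v$ and $\nabla^\theta(u-v)$, the $L^1\to L^2$ heat-semigroup bound applied to the Duhamel form of the Navier--Stokes equation, the linear damping in the pressureless Euler equation transferring the decay of $v$ to $u$, the identification of $-\Delta v$ as the dominant source giving $u-v$ the extra factor $t^{-1}$, and a simultaneous bootstrap/continuity argument closed by the smallness of the data. The only notable variation is that for the intermediate derivative orders of $v$ you propose weighted $\dot H^k$ energy estimates with Schonbek-type Fourier splitting, whereas the paper estimates every order directly from the Fourier-side Duhamel representation (splitting the time integral at $t/2$ and the frequencies at $|\xi|=1$); both are standard and interchangeable here.
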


\begin{remark}
Here we provide some remarks on the results in Theorem \ref{main_thm2}.
\begin{enumerate}
\item[(i)] 
Note that we obtained temporal $L^2$ decay of $v$ up to the same regularity as $u$. Moreover, since the decay rate for $v$  corresponds to the large time behavior of the heat equation, we can say it is an optimal rate. As stated before, we cannot have any decay estimates for $\rho$ since there is no dissipative effect on the density.

\item[(ii)]
The decay rate of $u-v$ is faster than  $u$ and $v$ and the same as that of $\Delta v$, which is a much faster rate than previous results \cite{CJ21, GWZ23}.

\item[(iii)]
For $\theta \in [0, \min\{d/2,m-2\}]$ with $d/2-\theta \le s$, one can get
\[
\|\rho(u-v)\|_{L^2} \le Ct^{-\frac{d}{4}-\frac{\theta}{2}-1},
\]
and hence the drag force $\rho(u-v)$ exhibits a faster temporal decay than the relaitve velocity $(u-v)$ in $L^2(\R^d)$.

\end{enumerate}
	\end{remark}

%
%
%
\subsection{Organization of paper}
The rest of this paper is organized as follows. In Section \ref{sec:pre}, we present several inequalities used throughout this paper, large time behavior estimates of the total energy, and local-in-time existence and uniqueness of solutions for system \eqref{A-1} in our desired Sobolev spaces. In Section \ref{sec:3}, we provide the {\it a priori} estimates for the regular solutions and establish the global existence result resulting in the proof of Theorem \ref{main_thm1}. In Section \ref{sec:4}, we establish the temporal decay estimates of solutions obtained in Theorem \ref{main_thm1} proving Theorem \ref{main_thm2}.

%
%
%
\section{Preliminaries}\label{sec:pre}
In this section, we present several useful Sobolev and interpolation inequalities and time decay estimate of the total energy for the pressureless ENS system \eqref{A-1} that will be significantly and frequently used throughout the paper. We also state the local-in-time well-posedness for the system \eqref{A-1}.

\subsection{Useful inequalities}

We first list some classical inequalities in the lemma below.
\begin{lemma}\label{lem_moser} 
\begin{itemize}
\item[(i)] For any pair of functions $f,g \in (H^k \cap L^\infty)(\R^d)$, we obtain
\[
\|\nabla^k (fg)\|_{L^2} \le C\lt(\|f\|_{L^\infty} \|\nabla^k g\|_{L^2} + \|\nabla^k f\|_{L^2}\|g\|_{L^\infty}\rt).
\]
Furthermore, if $\nabla f \in L^\infty(\R^d)$, we have
\[
\|\nabla^k(fg) - f\nabla^k g\|_{L^2} \le C\lt(\|\nabla f\|_{L^\infty}\|\nabla^{k-1} g\|_{L^2} + \|g\|_{L^\infty}\|\nabla^k f\|_{L^2}\rt).
\]
Here $C>0$ only depends on $k$ and $d$.

\item[(ii)] For $f \in H^{[d/2]+1}(\R^d)$ with $d\ge 3$, we have
\[
\|f\|_{L^\infty} \leq C\|\nabla f\|_{H^{[d/2]}}.
\]
\item[(iii)] For $ 0\leq \ell \leq (d-1)/p$ and $f \in W^{p,\ell}(\R^d)$, we have
\[
\|f\|_{L^{\frac{1}{\frac1p-\frac{\ell}{d}}}} \le C \|\nabla^\ell f\|_{L^p}.
\]
\item[(iv)] For $s$, $s_1$ and $s_2$ satisfying $s \le \min\{ s_1, s_2, s_1+s_2-d/2\}$ and $(f,g) \in H^{s_1}(\R^d) \times H^{s_2}(\R^d)$,
\[
\|fg\|_{H^s} \le C\|f\|_{H^{s_1}}\|g\|_{H^{s_2}}.
\]
\item[(v)] Let $q,r$ be any numbers satisfying $1 \leq q,r \leq \infty$, and let integers $j,m$ satisfy $0 \leq j \leq m$. If $f \in (W^{m,r} \cap L^q)(\R^d)$, then
\[
\|\nabla^j f\|_{L^p} \leq C\|\nabla^m f\|_{L^r}^\alpha \|f\|_{L^q}^{1-\alpha},
\]
where 
\[
\frac1p = \frac jd + \alpha\lt(\frac1r - \frac md \rt) + (1-\alpha) \frac1q
\]
for all $\alpha$ with
\[
\frac jm \leq \alpha \leq 1.
\]
Here $C>0$ is independent of $f$.

\item[(vi)]\cite{GO14} Let $\frac 12 < r<\infty$ and $1 <p_1, p_2, q_1, q_2 \le \infty$ satisfy $\frac1r = \frac{1}{p_1} + \frac{1}{q_1} = \frac{1}{p_2} + \frac{1}{q_2}$. Given $s > \max\{0, \frac{d}{r}-d\}$ or $s \in 2\N$, there exists $C= C(d,s,r,p_1, p_2, q_1, q_2)$ such that for all $f, g \in \mathscr{S}(\R^d)$, where $\mathscr{S}(\R^d)$ denotes the Schwartz space,
\[
\|\nabla^s (fg)\|_{L^r} \le C\lt( \|\nabla^s f\|_{L^{p_1}}\|g\|_{L^{q_1}} + \|f\|_{L^{p_2}}\|\nabla^s g\|_{L^{q_2}}\rt).
\]
\item[(vii)]\cite{JKLpre} For $p,q>0$, there exists $C=C(p,q)>0$ satisfying
\[
\| f\|_{L^1} \le C \lt\||x|^{\frac d2 + p} f\rt\|_{L^2}^{\frac{q}{p+q}} \lt\| |x|^{\frac d2-q}f \rt\|_{L^2}^{\frac{p}{p+q}}.
\]
\end{itemize}
\end{lemma}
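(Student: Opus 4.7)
The statement is a collection of classical harmonic-analysis inequalities, so the plan is not to prove anything genuinely new but to identify which standard machinery handles each item. I would treat (i)--(vi) by citing the corresponding classical sources and briefly sketching the mechanism, and give a short self-contained argument for (vii), which is the only piece whose proof does not sit verbatim in a textbook.

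For the tame estimate and commutator bound in (i), the plan is to expand $\nabla^k(fg)$ by the Leibniz rule, apply H\"older in paired exponents with $\tfrac1p+\tfrac1q=\tfrac12$, and use Gagliardo--Nirenberg interpolation to exchange derivatives for $L^\infty$ endpoints; this is exactly the Moser/Kato--Ponce calculation in Majda--Bertozzi. The commutator form follows because the top term (all derivatives on $g$) cancels, so only summands carrying at least one derivative of $f$ survive, and their extremes are $\|\nabla f\|_{L^\infty}\|\nabla^{k-1}g\|_{L^2}$ and $\|g\|_{L^\infty}\|\nabla^k f\|_{L^2}$. For (ii) I would use $\|f\|_{L^\infty}\le\|\widehat f\|_{L^1}$ together with Cauchy--Schwarz against $\langle\xi\rangle^{-[d/2]-1}$, which is in $L^2(\R^d)$ since $[d/2]+1>d/2$; applied to $\nabla f$ this yields the stated $H^{[d/2]}$-norm on the right. (iii) is the standard Gagliardo--Nirenberg--Sobolev embedding, and (iv) is the classical product rule in Sobolev spaces, obtained from Bony's paraproduct decomposition $fg=T_fg+T_gf+R(f,g)$ and Bernstein's inequalities, the condition $s\le\min\{s_1,s_2,s_1+s_2-d/2\}$ being exactly what guarantees convergence of the dyadic sums. (v) is the classical Gagliardo--Nirenberg interpolation, and (vi) is quoted from \cite{GO14}.

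The only piece requiring explicit work is (vii), and it is a one-line optimization. For any $R>0$ I would split
\[
\|f\|_{L^1}=\int_{|x|<R}|f|\,dx+\int_{|x|\ge R}|f|\,dx
\]
and apply Cauchy--Schwarz on each piece using the weights $|x|^{-(d/2-q)}$ on the inner ball and $|x|^{-(d/2+p)}$ on the exterior, which are locally/globally $L^2$ on the respective regions precisely because $p,q>0$. Direct spherical integration gives
\[
\|f\|_{L^1}\le \frac{R^q}{\sqrt{2q}}\bigl\||x|^{d/2-q}f\bigr\|_{L^2}+\frac{R^{-p}}{\sqrt{2p}}\bigl\||x|^{d/2+p}f\bigr\|_{L^2}.
\]
Minimizing the right-hand side over $R$ (the critical point satisfies $R^{p+q}\propto \||x|^{d/2+p}f\|_{L^2}/\||x|^{d/2-q}f\|_{L^2}$) produces the claimed geometric-mean interpolation with a constant depending only on $p$ and $q$.

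There is no real obstacle here: the only bookkeeping subtlety is checking in (i), (iv), and (vi) that the admissibility relations among exponents actually close so that H\"older and Bernstein apply, and in (vii) that the weight exponents $d-2q$ and $d+2p$ land on the correct side of the integrability threshold, which they do under $p,q>0$.
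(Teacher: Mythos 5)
Your proposal is correct and matches the paper's treatment: the paper offers no proof of this lemma at all, presenting items (i)--(v) as classical facts and citing \cite{GO14} and \cite{JKLpre} for (vi) and (vii), exactly as you do. Your self-contained argument for (vii) --- splitting $\|f\|_{L^1}$ at radius $R$, applying Cauchy--Schwarz with the weights $|x|^{-(d/2-q)}$ and $|x|^{-(d/2+p)}$ (square-integrable on the inner and outer regions precisely because $q>0$ and $p>0$), and optimizing over $R$ --- is a correct and complete derivation of the stated interpolation inequality.
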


\subsection{Energy estimates \& local well-posedness} The total energy for the system \eqref{A-1} is given by
\[
E(t):=   \frac12\intr \rho(x,t)|u(x,t)|^2\,dx + \frac12\intr |v(x,t)|^2\,dx.
\]
Then it can be easily checked that $E$ satisfies 
\bq\label{energy_est}
\frac{d}{dt} E(t) + D(t) = 0,
\eq
under the sufficient regularity assumptions on the solutions, where $D$ is the dissipation rate given by 
\[
D(t):= \intr |\nabla v(x,t)|^2\,dx + \intr \rho(x,t)|(u-v)(x,t)|^2\,dx.
\]
The above estimate only gives that the total energy is not increasing in time, however, by using almost the same argument as in \cite[Proposition 3.1]{CJ21}, we can have the following estimate of the large time behavior of classical solutions to the system \eqref{A-1}.
\begin{proposition}\label{large_time_0}
For $T>0$ and $d\ge 2$, let $(\rho,u,v)$ be a classical solution to the pressureless ENS system \eqref{A-1} on the time interval $[0,T]$ satisfying $\|\rho\|_{L^\infty(\R^d \times (0,T))}<\infty$. Then, there exists a constant $C>0$ independent of $T$ such that for every $\alpha \in (0,d/2)$,
\[
E(t)(1+t)^\alpha + \int_0^t (1+\tau)^\alpha D(\tau)\,d\tau \le C\lt(E(0)+\|v_0\|_{L^1}^2\rt)
\]
for all $t \in [0,T]$.
\end{proposition}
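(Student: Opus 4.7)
\emph{Proof plan.} I will adapt the Schonbek Fourier-splitting scheme essentially as in \cite[Proposition~3.1]{CJ21}. Multiplying the energy identity \eqref{energy_est} by $(1+t)^\alpha$ and integrating in time gives
\begin{equation*}
(1+t)^\alpha E(t)+\int_0^t(1+\tau)^\alpha D(\tau)\,d\tau = E(0) + \alpha\int_0^t(1+\tau)^{\alpha-1}E(\tau)\,d\tau,
\end{equation*}
so the problem reduces to dominating the last integral by $C(E(0)+\|v_0\|_{L^1}^2)$, modulo a term absorbable into $\int_0^t(1+\tau)^\alpha D(\tau)\,d\tau$ on the left-hand side.

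First I estimate the integrand pointwise in time. Writing $|u|^2\le 2|u-v|^2+2|v|^2$ and using the hypothesis $\|\rho\|_{L^\infty(\R^d\times(0,T))}<\infty$, I obtain $E(\tau)\le D(\tau)+C\|v(\tau)\|_{L^2}^2$. Next I control $\|v(\tau)\|_{L^2}^2$ by Fourier splitting on the time-dependent ball of radius $R(\tau)=(1+\tau)^{-1/2}$:
\begin{equation*}
\|v(\tau)\|_{L^2}^2\le CR(\tau)^d\|\hat v(\cdot,\tau)\|_{L^\infty(|\xi|\le R(\tau))}^2+R(\tau)^{-2}\|\nabla v(\tau)\|_{L^2}^2,
\end{equation*}
so that the high-frequency piece becomes $(1+\tau)\|\nabla v(\tau)\|_{L^2}^2$, which after multiplication by $\alpha(1+\tau)^{\alpha-1}$ produces a multiple of $(1+\tau)^\alpha D(\tau)$ that can be absorbed into the left-hand side.

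The main obstacle will be the uniform pointwise bound $\|\hat v(\cdot,\tau)\|_{L^\infty}\le C(\|v_0\|_{L^1}+\sqrt{E(0)})$. I will read it off from the Duhamel formula for the Navier--Stokes block,
\begin{equation*}
\hat v(\xi,\tau)=e^{-|\xi|^2\tau}\hat v_0(\xi)+\int_0^\tau e^{-|\xi|^2(\tau-s)}\mathcal{F}\bigl[\bbp\bigl(\rho(u-v)-\nabla\cdot(v\otimes v)\bigr)\bigr](\xi,s)\,ds,
\end{equation*}
combined with $|\hat v_0(\xi)|\le\|v_0\|_{L^1}$, $|\mathcal{F}[\nabla\cdot(v\otimes v)]|\le|\xi|\|v\|_{L^2}^2\le 2|\xi|E(0)$, and the mass conservation $\|\rho(\cdot,s)\|_{L^1}=\|\rho_0\|_{L^1}$, which yields $\|\rho(u-v)\|_{L^1}\le\|\rho_0\|_{L^1}^{1/2}D(s)^{1/2}$. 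A Cauchy--Schwarz in time on the drag contribution with $\int_0^\tau D(s)\,ds\le E(0)$, plus a careful use of the heat-kernel smoothing $\int_0^\tau e^{-|\xi|^2(\tau-s)}|\xi|\,ds\le\min(|\xi|^{-1},\tau|\xi|)$ on the convective contribution, closes this bound by a short bootstrap. Once in hand, substituting back delivers
\begin{equation*}
\alpha(1+\tau)^{\alpha-1}E(\tau)\le C(1+\tau)^{\alpha-1-d/2}+C(1+\tau)^\alpha D(\tau);
\end{equation*}
the first term is integrable on $[0,\infty)$ precisely when $\alpha<d/2$, and the second is absorbed (tracking the coefficient, with a short-time reduction to handle the initial layer where $\alpha(1+\tau)^{-1}$ need not be small). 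The delicate point throughout is that the low-frequency control of $\hat v$ is coupled via the drag to the very $L^2$-decay of $v$ that we are trying to establish, which is exactly where the coupled structure of \eqref{A-1} is exploited.
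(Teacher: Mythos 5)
Your overall strategy --- the weighted energy identity, Schonbek's Fourier splitting on the ball $|\xi|\le R(\tau)$ with $R(\tau)^2\sim(1+\tau)^{-1}$, and the Duhamel representation of $\hat v$ to control the low frequencies --- is exactly the route the paper intends (it simply invokes the argument of \cite[Proposition 3.1]{CJ21}). Two of your steps are fine modulo bookkeeping: the reduction $E(\tau)\le D(\tau)+C\|v(\tau)\|_{L^2}^2$, and the absorption of the high-frequency piece, for which you must take $R(\tau)^2=K/(1+\tau)$ with $K$ large depending on $\alpha$ and $\|\rho\|_{L^\infty}$, since with $K=1$ the absorbed term carries a coefficient $C\alpha$ that need not be below $1$ (your ``tracking the coefficient'' remark presumably covers this).

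The genuine gap is the claimed uniform bound $\|\hat v(\cdot,\tau)\|_{L^\infty}\le C(\|v_0\|_{L^1}+\sqrt{E(0)})$. The Cauchy--Schwarz you describe gives, for the drag contribution, only $\int_0^\tau e^{-|\xi|^2(\tau-s)}\|\rho(u-v)(s)\|_{L^1}\,ds\le C\min\{\tau^{1/2},|\xi|^{-1}\}\|\rho_0\|_{L^1}^{1/2}E(0)^{1/2}$, and the convective contribution is likewise only $O(\min\{\tau|\xi|,|\xi|^{-1}\})$; neither is uniformly bounded on the splitting ball. Inserting $\min\{\tau^{1/2},|\xi|^{-1}\}$ into $\int_{|\xi|\le R(\tau)}|\hat v|^2\,d\xi$ produces $C\tau(1+\tau)^{-d/2}$, so the resulting source term $C(1+\tau)^{\alpha-d/2}$ is integrable only for $\alpha<d/2-1$. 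To reach all $\alpha<d/2$ one must genuinely iterate: each established bound $\int_0^t(1+s)^{\gamma}D(s)\,ds\le C$ is fed back into the Duhamel estimate via $\int_0^\tau e^{-|\xi|^2(\tau-s)}\|\rho(u-v)\|_{L^1}\,ds\le(\int_0^\tau e^{-2|\xi|^2(\tau-s)}(1+s)^{-\gamma}\,ds)^{1/2}(\int_0^\tau(1+s)^{\gamma}\|\rho(u-v)\|_{L^1}^2\,ds)^{1/2}$ (with an analogous improvement for the $\|v\|_{L^2}^2$ factor in the convective term), raising the admissible $\alpha$ by almost one at each stage; only after the exponent passes $1$ does your uniform $L^\infty_\xi$ bound become available. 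A ``short bootstrap'' aimed directly at that single uniform bound is circular, since the bound requires $\gamma>1$, which requires the proposition for some $\alpha>1$. This finite induction over exponents is the actual content of \cite[Proposition 3.1]{CJ21} and must be set up explicitly. Note also that for $d=2$ the exponent never exceeds $1$, so the uniform bound on $\hat v$ is never available and the case $d=2$, which the present proposition claims, needs an additional argument beyond your sketch.
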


Finally, in the theorem below, we give the local-in-time existence and uniqueness of classical solutions to the system \eqref{A-1}.  One may deduce its proof from a slight modification of \cite{CK16,HKK14} according to the arguments in this paper.
\begin{theorem}\label{thm_local} Let $d \ge 2$, $m > d/2+1$ and $s\in (m-2, m-1]$. Suppose that the initial data $(\rho_0, u_0, v_0)$ satisfy the assumptions (i) and (ii) in Theorem \ref{main_thm1}. Then for any positive constants $N < M$, there exists a positive constant $T_0$ depending only on $N$ and $M$ such that if 
\[
 \|\rho_0\|_{H^s} + \|u_0\|_{H^m} + \|v_0\|_{H^s} + \|v_0\|_{L^1} < N, 
 \]
then the pressureless ENS system \eqref{A-1}-\eqref{A-1_ini} admits a unique solution 
\[
(\rho, u, v) \in \mc([0,T_0];H^s(\R^d)) \times \mathcal{C}([0,T_0];H^m(\R^d)) \times \mathcal{C}([0,T_0];H^s(\R^d))
\] 
satisfying $\rho(x,t) > 0$ for all $(x,t) \in \R^d \times [0,T_0]$,
\[
\sup_{0 \leq t \leq T_0}\lt(  \|u(\cdot,t)\|_{H^m} + \|v(\cdot,t)\|_{H^s} \rt) \leq M, \quad \mbox{and} \quad \sup_{0 \leq t \leq T_0} \|\rho(\cdot,t)\|_{H^s}<\infty.
\]
\end{theorem}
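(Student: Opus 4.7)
My plan is to construct the solution by a Picard-type iteration together with uniform Sobolev bounds, following the template of \cite{CK16,HKK14} but adapted to the lower regularity class $(\rho,u,v) \in H^s \times H^m \times H^s$ with $s\in(m-2,m-1]$ and $m>d/2+1$ used here. Set $(\rho^0, u^0, v^0)=(\rho_0,u_0,v_0)$ and, given $(\rho^n,u^n,v^n)$, define $(\rho^{n+1},u^{n+1},v^{n+1})$ as the solution of the \emph{linearized} system
\begin{align*}
&\pa_t \rho^{n+1} + \nabla\cdot(\rho^{n+1} u^n) = 0,\\
&\pa_t u^{n+1} + (u^n\cdot\nabla) u^{n+1} = -(u^{n+1}-v^n),\\
&\pa_t v^{n+1} + (v^n\cdot\nabla) v^{n+1} + \nabla p^{n+1} - \Delta v^{n+1} = \rho^n(u^n - v^{n+1}), \quad \nabla\cdot v^{n+1}=0,
\end{align*}
with initial data $(\rho_0,u_0,v_0)$. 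Each subsystem is a standard linear problem: a transport equation for $\rho^{n+1}$, a transport equation with linear damping for $u^{n+1}$, and a Stokes-type equation for $v^{n+1}$, so solvability of each step is classical once $u^n, v^n$ are Lipschitz and $\rho^n \in H^s$, which our induction will guarantee since $m>d/2+1$ forces $\nabla u^n, \nabla v^n \in L^\infty$ by Sobolev embedding.

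\textbf{Uniform bounds.} I would set up the functional
\[
\mt_n(t) := \|\rho^n(t)\|_{H^s}^2 + \|u^n(t)\|_{H^m}^2 + \|v^n(t)\|_{H^s}^2 + \|v^n(t)\|_{L^1}^2 + \int_0^t \|\nabla v^n(\tau)\|_{H^s}^2\,d\tau,
\]
and prove by induction that $\mt_n(t)\le M^2$ on $[0,T_0]$ for $T_0=T_0(N,M)$ small. For $\rho^{n+1}$ in $H^s$, apply $\nabla^\sigma$ for $|\sigma|\le s$, pair with $\nabla^\sigma\rho^{n+1}$, and use the commutator estimate from Lemma \ref{lem_moser}(i) along with $\|\nabla u^n\|_{L^\infty}\ls\|u^n\|_{H^m}$ to obtain $\tfrac{d}{dt}\|\rho^{n+1}\|_{H^s}^2 \ls \|\nabla u^n\|_{L^\infty}\|\rho^{n+1}\|_{H^s}^2$. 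For $u^{n+1}$ in $H^m$, the damping term contributes a good sign, and the transport commutator is handled by Lemma \ref{lem_moser}(i) since $\nabla u^n\in L^\infty$. For $v^{n+1}$ in $H^s$, parabolic energy estimates give
\[
\tfrac12\tfrac{d}{dt}\|v^{n+1}\|_{H^s}^2 + \|\nabla v^{n+1}\|_{H^s}^2 \ls \|\nabla v^n\|_{L^\infty}\|v^{n+1}\|_{H^s}^2 + \|\rho^n(u^n-v^{n+1})\|_{H^s}\|v^{n+1}\|_{H^s},
\]
where the forcing is controlled by Lemma \ref{lem_moser}(iv) using $s>d/2-1$ and $m>s+1$. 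The $L^1$ bound for $v^{n+1}$ follows from the Duhamel representation via the heat semigroup together with Lemma \ref{lem_moser}(vii). Summing these estimates and applying Grönwall yields $\mt_{n+1}(t)\le N^2 + C T_0 M^4 \cdot (\text{polynomial in }M)$ on $[0,T_0]$; choosing $T_0$ small closes the induction. Positivity of $\rho^{n+1}$ is preserved since it solves a linear transport equation and can be written as $\rho_0$ composed with the backward flow of $u^n$ times the Jacobian.

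\textbf{Convergence and uniqueness.} Next I would show that $(\rho^{n+1}-\rho^n,u^{n+1}-u^n,v^{n+1}-v^n)$ is Cauchy in a weaker norm. Because the density sits at regularity $H^s$ and the natural $L^2$-difference estimate for $\rho$ loses one derivative of $u$, I would estimate the $\rho$-difference in $H^{-1}$ (or apply the negative Sobolev argument hinted at in Section \ref{ssec_sta}) while keeping $u$ and $v$ differences in $L^2$; the resulting closed system of inequalities contracts on $[0,T_0]$ after shrinking $T_0$ if necessary. Passing to the limit, the uniform high-regularity bounds together with standard Aubin–Lions compactness upgrade the limit $(\rho,u,v)$ to a strong solution with the stated regularity, and continuity in time follows from the equations themselves (for $\rho$, continuity in $H^s$ comes from the transport structure; for $v$, from parabolic regularity; for $u$, from the ODE-like equation along characteristics). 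Uniqueness is obtained by the same weak-norm stability estimate applied directly to the difference of two solutions.

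\textbf{Main obstacle.} The genuine difficulty is the regularity gap: $\rho$ is only in $H^s$ with $s<m-1$, so the transport commutator $[\nabla^s,u^n\cdot\nabla]\rho^{n+1}$ sits right at the edge of what Lemma \ref{lem_moser}(i) can handle, and crucially forces the use of $\|\nabla u^n\|_{L^\infty}\ls\|u^n\|_{H^m}$ (which requires $m>d/2+1$, exactly our assumption). The accompanying difference-estimate for $\rho$ in $L^2$ would cost another derivative of $u$ we do not have, which is why the contraction must be performed in a negative Sobolev space; this is the same mechanism invoked for uniqueness in the global theorem and is the only nonstandard ingredient in an otherwise routine linearization argument.
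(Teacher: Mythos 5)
The paper offers no written proof of Theorem \ref{thm_local}; it only remarks that the result follows from a ``slight modification of \cite{CK16,HKK14} according to the arguments in this paper,'' and Section \ref{ssec_sta} confirms that the intended uniqueness/contraction mechanism is exactly the negative-Sobolev stability estimate you invoke. So your overall architecture (linearized Picard iteration, uniform high-norm bounds, contraction of the differences with $\rho$ measured in $\dot H^{-\alpha}$ and $u,v$ in $L^2$/$H^1$) is the right one and matches the paper's intent.

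However, the way you propose to close the energy estimates has a concrete gap precisely at the point that makes this regularity class nontrivial. For the $H^s$ bound on $\rho^{n+1}$ you appeal to Lemma \ref{lem_moser}(i), but that commutator estimate reads
\[
\|\nabla^{k}(fg)-f\nabla^{k}g\|_{L^2}\le C\lt(\|\nabla f\|_{L^\infty}\|\nabla^{k-1}g\|_{L^2}+\|g\|_{L^\infty}\|\nabla^{k}f\|_{L^2}\rt),
\]
and with $f=u^n$, $g=\rho^{n+1}$, $k=s+1$ it produces the factor $\|\rho^{n+1}\|_{L^\infty}$, which is \emph{not} controlled by $\|\rho^{n+1}\|_{H^s}$: the paper explicitly allows $s<d/2$ (e.g.\ $s=d/2-1+\epsilon$), so $H^s\not\hookrightarrow L^\infty$, and moreover $s+1$ is generally non-integer so (i) does not even apply as stated. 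Your conclusion $\frac{d}{dt}\|\rho^{n+1}\|_{H^s}^2\ls\|\nabla u^n\|_{L^\infty}\|\rho^{n+1}\|_{H^s}^2$ therefore does not follow from the cited lemma; one needs the Fourier-side commutator estimate \eqref{comm_est_s}, which is exactly the ``transformed commutator estimate'' the paper builds for this purpose and which trades $\|\rho\|_{L^\infty}$ for $\|\rho\|_{H^s}$ at the price of $\||\xi|\hat u\|_{L^1}+\|\nabla^m u\|_{L^2}$ (both finite since $u^n\in H^m$ with $m>d/2+1$). The same issue recurs in your $v^{n+1}$ estimate: the transport commutator $[\nabla^s,v^n\cdot\nabla]v^{n+1}$ cannot be bounded by $\|\nabla v^n\|_{L^\infty}\|v^{n+1}\|_{H^s}^2$ alone via Lemma \ref{lem_moser}(i) (the symmetric term would require $\|\nabla v^{n+1}\|_{L^\infty}$ or $\|v^{n+1}\|_{L^\infty}$, neither controlled by $H^s$), and the drag term $\rho^n v^{n+1}$ is not an $H^s\times H^s\to H^s$ product when $s<d/2$; as in Lemma \ref{v_Hs_est}, it must be estimated at the level $\|\nabla^{s-1}(\rho(u-v))\|_{L^2}\ls\|\rho\|_{H^s}(\|u-v\|_{L^2}+\|\nabla^{s+1}u\|_{L^2}+\|\nabla^{s+1}v\|_{L^2})$ and the $\|\nabla^{s+1}v^{n+1}\|_{L^2}$ contribution absorbed into the parabolic dissipation by Young's inequality. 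Once these two points are repaired with the paper's refined estimates, your iteration and negative-Sobolev contraction go through.
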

%
%
%

\section{A priori estimates}\label{sec:3}

In this section, we provide the a priori estimates for the derivatives of $\rho$, $u$, and $v$, which will be used to derive our desired global-in-time existence estimates. To this end, let $T>0$, $d \ge2$, $m > d/2+1$, and $s \in (m-2, m-1]$ and define
\[
\mathfrak{X}(s;T) := \sup_{0 \le t \le T}\lt( \|\rho(\cdot,t)\|_{H^s}^2 + \|u(\cdot,t)\|_{H^m}^2 + \|v(\cdot,t)\|_{H^s}^2\rt) + \|v\|_{L^{\frac{d+4}{2}}(0,T;L^2)}^2< \e_1^2 \ll 1,
\]
\[
\mathfrak{X}^*(s;T) := \sup_{0 \le t \le T}\lt(\|u(\cdot,t)\|_{H^m}^2 + \|v(\cdot,t)\|_{H^s}^2\rt) + \|v\|_{L^{\frac{d+4}{2}}(0,T;L^2)}^2,
\]
\begin{align*}
\mathfrak{D}(s;T) &:= \int_0^T \|\nabla u(t)\|_{L^\infty}\,dt + \int_0^T \||\xi| \hat v(t)\|_{L^1_\xi}\,dt + \int_0^T \|\nabla^2 u(t)\|_{H^{m-2}}\,dt + \int_0^T \|\nabla^2 v(t)\|_{H^{m-2}}\,dt\\
&\quad + \int_0^T \|(u-v)(t)\|_{L^2}\,dt + \|v\|_{L^{\frac{d+4}{2}}(0,T;L^2)},
\end{align*}
and
\[
\mathfrak{X}_0(s) := \|\rho_0\|_{H^s}^2 + \|u_0\|_{H^m}^2 + \|v_0\|_{H^s}^2.
\]
Throughout this section, we assume that for $\e_1>0$ small enough,
\[
\mathfrak{X}(s;T) < \e_1^2 \ll 1.
\]
The idea of proof is mainly organized into three steps. 
\begin{itemize}
\item {\bf (Step I)} We first show that there exists $C>0$ independent of $T$ such that
\[
\sup_{0 \leq t \leq T} \|\rho(t)\|_{H^s} \leq \|\rho_0\|_{H^s} \exp\lt(C\mathfrak{D}(s;T) \rt)
\]
and
\[
\mathfrak{X}^*(s;T) \leq C(\mathfrak{X}_0(s) + \|v_0\|_{L^1}^2) + C\e_1 \mathfrak{D}(s;T) + C\e_1 \mathfrak{D}(s;T)^2.
\]

\item {\bf (Step II)} We next show that here exists $C>0$ independent of $T$ such that
\[
\mathfrak{D}(s;T) \le C\lt(\sqrt{\mathfrak{X}_0(s)} + \|v_0\|_{L^1} \rt) + C\mathfrak{X}_0(s).
\]

\item {\bf (Step III)} Finally, we use the bound estimates obtained in {\bf Steps I \& II} together with a standard continuation argument to construct the global solution. 
\end{itemize}

\subsection{Estimates of $\mathfrak{X}(s;T)$}
First, we begin with the $H^s$-estimate of $\rho$.
\begin{lemma}\label{rho_est_up}
For $T>0$, there exists $C>0$ independent of $T>0$ such that
\[
\sup_{0 \leq t \leq T} \|\rho(t)\|_{H^s} \leq \|\rho_0\|_{H^s} \exp\lt(C\mathfrak{D}(s;T) \rt).
\]
\end{lemma}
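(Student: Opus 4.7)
The continuity equation rewrites as $\pa_t \rho + u \cdot \nabla \rho = -\rho \,\nabla \cdot u$, so the lemma is a Gronwall-type estimate on the transport of $\rho$ by the velocity $u$. I would split $\|\rho\|_{H^s}^2 = \|\rho\|_{L^2}^2 + \|\nabla^s \rho\|_{L^2}^2$ and handle the two pieces in parallel, aiming for a differential inequality
\[
\frac{d}{dt}\|\rho(t)\|_{H^s} \;\le\; C\bigl(\|\nabla u(t)\|_{L^\infty} + \|\nabla^2 u(t)\|_{H^{m-2}}\bigr)\,\|\rho(t)\|_{H^s},
\]
whose right-hand side is integrable in $t$ by the two leading terms of $\mathfrak{D}(s;T)$.

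The $L^2$ piece is immediate: multiply by $\rho$, integrate by parts on the transport term, and bound $-\frac12\int (\nabla \cdot u)\rho^2$ by $\tfrac12\|\nabla u\|_{L^\infty}\|\rho\|_{L^2}^2$. For the homogeneous $\dot H^s$ piece, I apply $\nabla^s$ to the equation and pair with $\nabla^s \rho$, which gives
\[
\frac12\frac{d}{dt}\|\nabla^s \rho\|_{L^2}^2
= -\int u\cdot\nabla(\nabla^s\rho)\,\nabla^s\rho\,dx
-\int [\nabla^s, u\cdot\nabla]\rho\cdot \nabla^s\rho\,dx
-\int \nabla^s(\rho\,\nabla\cdot u)\,\nabla^s\rho\,dx.
\]
The transport term integrates by parts to a bound by $\tfrac12\|\nabla u\|_{L^\infty}\|\nabla^s\rho\|_{L^2}^2$, so the task reduces to estimating the commutator $[\nabla^s, u\cdot\nabla]\rho$ and the product $\nabla^s(\rho\,\nabla\cdot u)$ in $L^2$ by $C(\|\nabla u\|_{L^\infty}+\|\nabla^2 u\|_{H^{m-2}})\|\rho\|_{H^s}$.

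Both are done with the fractional Leibniz rule of Lemma \ref{lem_moser}(vi): for the commutator, one splits as $\|\nabla u\|_{L^\infty}\|\nabla^s\rho\|_{L^2} + \|\nabla^s u\|_{L^{p_1}}\|\nabla \rho\|_{L^{q_1}}$, and for the product, as $\|\nabla u\|_{L^\infty}\|\nabla^s\rho\|_{L^2} + \|\rho\|_{L^{p_2}}\|\nabla^{s+1}u\|_{L^{q_2}}$, with $1/p_i+1/q_i=1/2$. The exponents $(p_i,q_i)$ are chosen via Sobolev embedding so that $\|\nabla\rho\|_{L^{q_1}}$ and $\|\rho\|_{L^{p_2}}$ are absorbed into $\|\rho\|_{H^s}$, while the $u$-factors $\|\nabla^s u\|_{L^{p_1}}$ and $\|\nabla^{s+1} u\|_{L^{q_2}}$ are reduced, by Sobolev embedding and Gagliardo--Nirenberg interpolation using $s+1\le m$ and $d/2+1<m$, to a quantity dominated by $\|\nabla^2 u\|_{H^{m-2}}$. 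Summing these inequalities and applying Gronwall yields $\|\rho(t)\|_{H^s}\le \|\rho_0\|_{H^s}\exp(C\mathfrak{D}(s;T))$.

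\textbf{Main obstacle.} The subtle point is the possibly low regularity of $\rho$: because $s$ is only assumed $>m-2>d/2-1$, we may have $s<d/2$, so $\rho\notin L^\infty$ in general. This forces us to spend Sobolev regularity on $\rho$ via $\|\rho\|_{L^{p_2}}\lesssim\|\rho\|_{H^s}$ and in exchange to push derivatives onto $u$ in a higher $L^q$ norm; the condition $m>d/2+1$ is precisely what allows the resulting $\nabla^{s+1} u$ factor to be recovered from $\|\nabla^2 u\|_{H^{m-2}}$ after a Sobolev or interpolation step, so that no quantity outside $\mathfrak{D}(s;T)$ appears.
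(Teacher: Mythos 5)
Your overall architecture (energy estimate on $\|\rho\|_{L^2}$ and $\|\nabla^s\rho\|_{L^2}$, then Gr\"onwall against the first terms of $\mathfrak{D}(s;T)$) matches the paper, and your treatment of the $L^2$ piece, the transport term, and the product $\nabla^s(\rho\,\nabla\cdot u)$ is sound. The gap is in the commutator term. You bound $[\nabla^s,u\cdot\nabla]\rho$ by
\[
\|\nabla u\|_{L^\infty}\|\nabla^s\rho\|_{L^2}+\|\nabla^s u\|_{L^{p_1}}\|\nabla\rho\|_{L^{q_1}},
\]
i.e.\ you apply the Leibniz/commutator rule at order $s$ with $g=\nabla\rho$, so the second term necessarily carries one full derivative of $\rho$ in some Lebesgue norm. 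To close the estimate you need $\|\nabla\rho\|_{L^{q_1}}\lesssim\|\rho\|_{H^s}$ for some $q_1\ge 2$, which requires $s\ge 1$. But in the regime the theorem is designed for, $s\in(m-2,m-1]$ with $m>d/2+1$ allows $s<1$ when $d=2$ or $d=3$ (e.g.\ $d=2$, $m=2.1$, $s=0.2$, so $\nabla\rho$ only lies in $H^{-0.8}$). In exactly the low-regularity situation you flag as the ``main obstacle,'' your decomposition spends a derivative on $\rho$ that $\rho$ does not have; choosing $q_1<2$ does not help, since $H^{s}\hookrightarrow W^{1,q_1}$ fails on the whole space for $s<1$.

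The fix — and what the paper actually does — is to never let more than $s$ derivatives land on $\rho$. Write the nonlinear term as $\nabla^{s}\nabla\cdot(\rho u)-u\cdot\nabla\nabla^{s}\rho$, a commutator of order $s+1$ acting on the \emph{product} $\rho u$, and prove a tailored commutator estimate (the paper's \eqref{comm_est_s}) on the Fourier side from $\bigl||\xi|^{s+1}-|\eta|^{s+1}\bigr|\le C|\xi-\eta|\bigl(|\xi-\eta|^{s}+|\eta|^{s}\bigr)$: the derivative loss is distributed either as $(1$ on $u,\ s$ on $\rho)$ or $(s+1$ on $u,\ 0$ on $\rho)$, so $\rho$ only ever appears through $\|\rho\|_{H^s}$, while the $u$-factors are controlled by $\||\xi|\hat u\|_{L^1_\xi}$ and $\|\nabla^m u\|_{L^2}$ (the former is then split via $u=(u-v)+v$ and interpolation to land in $\mathfrak{D}(s;T)$; your direct use of $\|\nabla u\|_{L^\infty}$ would also be admissible there). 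For $d\ge 4$ one has $s>1$ and your version would go through, but as stated the argument does not cover $d=2,3$, which is precisely where the almost-critical regularity claim lives.
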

\begin{proof}
First, a direct computation gives  
\[
		\frac{d}{dt} \| \rho \|_{L^2}^2 \leq \| \nabla \cdot u \|_{L^{\infty}} \| \rho \|_{L^2}^2.
\]
For $\dot{H}^s$-estimate with $m-2 < s \leq m-1$, we use $||\xi|^{s+1} - |\eta|^{s+1} |\le  C|\xi-\eta|\lt( |\xi-\eta|^s + |\eta|^s\rt) $ to get
\[\begin{aligned}
 &\lt||\xi|^{s+1} \intr\hat f(\xi-\eta)  \hat g(\eta)\,d\eta - \intr \hat f(\xi-\eta)  |\eta |^{s+1} \hat g(\eta )\,d\eta\rt|\\
 \quad&\le C\intr |\xi-\eta|^{s+1}|\hat f(\xi-\eta)| |\hat g(\eta)|\,d\eta + C\intr |\xi-\eta|\hat f(\xi-\eta) |\eta|^s |\hat g(\eta)|\,d\eta\\
 \quad &\le C \lt(|\cdot|\mathds{1}_{ \{|\cdot| \le 1\}}\hat f\rt)\ast \hat g(\xi) +  C \lt(|\cdot|^{s+1}\mathds{1}_{ \{|\cdot| >1\} }\hat f\rt)\ast \hat g(\xi) + C \lt[(|\cdot|\hat f)\ast (|\cdot|^s \hat g)\rt](\xi).
\end{aligned}\]
This implies
\bq\label{comm_est_s}
\begin{aligned}
\| \nabla^{s+1} (fg) - f\nabla^{s+1} g\|_{L^2} &\le C\lt(\||\xi| \hat f\|_{L_\xi^1}\|g\|_{H^s} + \||\xi|^{s+1}\mathds{1}_{\{|\xi|>1\}}\hat f\|_{L_\xi^p}\| \hat g\|_{L_\xi^q}\rt)\\
&\le  C\lt(\||\xi| \hat f\|_{L_\xi^1} + \|\nabla^m f\|_{L^2}\rt)\|g\|_{H^s},
\end{aligned}
\eq
where $p$ and $q$ satisfy
\[
\frac 1p + \frac1q = \frac 32, \quad \frac1p \in \lt( \frac12, \ \frac12 + \frac{m-(s+1)}{d} \rt)\quad \mbox{and}\quad \frac1q \in \lt(\frac12, \ \frac12 + \frac sd\rt).
\]
By using the above commutator estimate, we obtain
\[\begin{aligned}
\frac12\frac{d}{dt}\|\nabla^s \rho\|_{L^2}^2 &= -\intr \lt(\nabla^s \nabla\rho \cdot u\rt)\nabla^s \rho\,dx - \intr \lt(\nabla^s(\nabla\cdot (\rho u)) -u \cdot \nabla^s \nabla\rho \rt)\nabla^s \rho\,dx\\
&\le C \lt( \||\xi| \hat u\|_{L_\xi^1} + \|\nabla^2 u\|_{H^{m-2}}\rt) \|\rho(t)\|_{H^s}^2.
\end{aligned}\]
Here, we use Lemma \ref{lem_moser} (vii) to get
\[
\||\xi| \hat u\|_{L_\xi^1} \le \| |\xi| \rwhat{(u-v)}\|_{L_\xi^1} + \| |\xi|\hat v\|_{L_\xi^1} \le C\|\nabla^m (u-v)\|_{L^2}^{\frac{\frac d2 +1}{m}}\|(u-v)\|_{L^2}^{\frac{m-\frac d2-1}{m}} +  \| |\xi|\hat v\|_{L_\xi^1},
\]
and this shows
\[
\frac12\frac{d}{dt} \|\rho(t)\|_{H^s}^2 \leq C \lt( \||\xi| \hat v\|_{L_\xi^1} + \|\nabla^2 u\|_{H^{m-2}}+ \|\nabla^2 v\|_{H^{m-2}} + \|(u-v)\|_{L^2}\rt) \|\rho(t)\|_{H^s}^2.
\]
Applying Gr\"onwall's lemma yields the desired result.
\end{proof}

Next, we estimate $u$ and $v$ separately.

\begin{lemma}\label{u_Hm_est}
For $T>0$, there exists $C>0$ independent of $T$ such that for $\ell =1,2$,
\[\begin{aligned}
&\sup_{t \in [0,T]} \|u(t)\|_{H^m}^\ell + \int_0^T \|\nabla^2 u(t)\|_{H^{m-2}}^\ell\,dt \cr
&\quad \le C\sqrt{\mathfrak{X}_0(s)}^\ell + C\e_1^\ell \int_0^T \|\nabla u(t)\|_{L^\infty}\,dt   + C \e_1^{\ell-1}\int_0^T \|(u-v)(t)\|_{L^2}\,dt +C\e_1^{\ell-1} \int_0^T \|\nabla^2 v(t)\|_{H^{m-2}} \,dt.
\end{aligned}\]
In particular, we have
\[
\sup_{t \in [0,T]} \|u(t)\|_{H^m}^2 + \int_0^T \|\nabla^2 u(t)\|_{H^{m-2}}^2\,dt \le C\mathfrak{X}_0(s) + C\e_1\mathfrak{D}(s;T).
\]
\end{lemma}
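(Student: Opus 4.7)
The first step is to rewrite the Euler part in a form convenient for energy estimates. Dividing the second equation of \eqref{A-1} by $\rho>0$ and invoking the continuity equation cancels the density factor and produces
\begin{equation*}
\partial_t u + (u\cdot\nabla)u = -(u-v) = -u + v.
\end{equation*}
I will read the right-hand side in two complementary ways: as the \emph{source form} $-(u-v)$ when I want a $\|u-v\|_{L^2}$-type quantity to appear in the bound, and as the \emph{damped form} $-u+v$ when I want the $-u$ term to supply linear damping at each derivative level. Applying $\nabla^k$, pairing with $\nabla^k u$ in $L^2$, and bounding the convective contribution by the commutator estimate of Lemma~2.1(i) yields
\begin{equation*}
\left|\int \nabla^k\!\big((u\cdot\nabla)u\big)\cdot\nabla^k u\,dx\right| \le C\|\nabla u\|_{L^\infty}\|\nabla^k u\|_{L^2}^2.
\end{equation*}

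I use the source form at $k=0$ to obtain
\begin{equation*}
\tfrac12\tfrac{d}{dt}\|u\|_{L^2}^2 \le C\|\nabla u\|_{L^\infty}\|u\|_{L^2}^2 + \|u-v\|_{L^2}\|u\|_{L^2},
\end{equation*}
which feeds the $\int\|u-v\|_{L^2}$ contribution, and the damped form at each $k\in[2,m]$, obtaining
\begin{equation*}
\tfrac12\tfrac{d}{dt}\|\nabla^k u\|_{L^2}^2 + \|\nabla^k u\|_{L^2}^2 \le C\|\nabla u\|_{L^\infty}\|\nabla^k u\|_{L^2}^2 + \|\nabla^k v\|_{L^2}\|\nabla^k u\|_{L^2},
\end{equation*}
where $\|\nabla^k v\|_{L^2}\le\|\nabla^2 v\|_{H^{m-2}}$ since $k\ge 2$. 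The level $k=1$ is deliberately skipped: neither form yields an admissible source there (the troublesome $\|\nabla v\|_{L^2}$ does not appear among the target quantities), but this is harmless because Gagliardo-Nirenberg interpolation produces the equivalence $\|u\|_{H^m}\simeq \|u\|_{L^2}+\|\nabla^2 u\|_{H^{m-2}}$ (valid since $m>2$), so the $k\in\{0\}\cup[2,m]$ inequalities together already capture the full $H^m$-norm.

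For the $\ell=2$ bound I integrate these ODEs in time and invoke the a priori smallness $\|\nabla^k u\|_{L^2}\le\|u\|_{H^m}\le\e_1$ exactly once per source, extracting one factor of $\e_1$ from each of $\|\nabla u\|_{L^\infty}\|\nabla^k u\|_{L^2}^2$ and $\|\nabla^k v\|_{L^2}\|\nabla^k u\|_{L^2}$; summing over $k$ then reproduces the $\e_1^2$ coefficient on $\int\|\nabla u\|_{L^\infty}$ and the $\e_1$ coefficient on the $v$-type sources. For $\ell=1$ I instead divide each squared ODE by $\|\nabla^k u\|_{L^2}$ (justified by regularizing via $\sqrt{\|\cdot\|^2+\delta}$ and letting $\delta\to 0^+$) to obtain the linear inequality
\begin{equation*}
\tfrac{d}{dt}\|\nabla^k u\|_{L^2} + \|\nabla^k u\|_{L^2} \le C\|\nabla u\|_{L^\infty}\|\nabla^k u\|_{L^2} + \|\nabla^k v\|_{L^2}\quad(k\ge 2),
\end{equation*}
together with its undamped $k=0$ analogue, and integrating and summing gives the $\ell=1$ claim; no $\e_1$ prefactor appears in front of $\int\|u-v\|_{L^2}$ or $\int\|\nabla^2 v\|_{H^{m-2}}$ because no smallness extraction is performed.

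The main subtlety I foresee is the bookkeeping of the $\e_1^{\ell}$ versus $\e_1^{\ell-1}$ coefficients, which requires invoking the a priori bound $\|u\|_{H^m}\le\e_1$ exactly once per source term at $\ell=2$ and not at all at $\ell=1$. A secondary technical point is that $\|\nabla^2 v\|_{H^{m-2}}$ presupposes $v\in H^m$, whereas the data only place $v_0$ in $H^s$ with $s<m$; this is handled by treating the above as a priori estimates on the local-in-time solutions furnished by Theorem~2.2, with the parabolic smoothing of the Navier-Stokes part supplying the missing regularity for $t>0$ and the finiteness of $\int_0^T\|\nabla^2 v\|_{H^{m-2}}\,dt$ established separately in the companion estimates of $v$ later in Section~3.
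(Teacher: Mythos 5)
Your proposal is correct and follows essentially the same route as the paper: the undamped $L^2$ estimate with the source form $-(u-v)$, the damped $\dot H^k$ estimates for $k\in[2,m]$ with $\|\nabla^k v\|_{L^2}\le\|\nabla^2 v\|_{H^{m-2}}$, and one application of the smallness $\|u\|_{H^m}\le\e_1$ per factor to produce the stated powers of $\e_1$ (your phrase ``one factor of $\e_1$ from each'' should read two factors for the convective term at $\ell=2$, but the intended bound $\|\nabla^k u\|_{L^2}^2\le\e_1^2$ is clear). The skipping of $k=1$ and its recovery by interpolation is exactly what the paper does implicitly.
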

\begin{proof}
From the equations for $u$ in \eqref{A-1}, we have
\[
\frac12 \frac{d}{dt}\|u\|_{L^2}^2 = \frac12 \intr (\nabla\cdot u) |u|^2\,dx - \intr (u-v)\cdot u\,dx \le \|\nabla u\|_{L^\infty}\|u\|_{L^2}^2 + \|(u-v)\|_{L^2}\|u\|_{L^2}.
\]
For $k \in [2, m]$, we  use Lemma \ref{lem_moser} (i) to get
\begin{equation*}
	\begin{aligned}
		&\frac12 \frac{d}{dt}\|\nabla^k u\|_{L^2}^2 + \|\nabla^k u\|_{L^2}^2  \leq  C\|u\|_{H^m}\|\nabla^k u\|_{L^2}^2 + \|\nabla^k v\|_{L^2}\|\nabla^k u\|_{L^2}.
	\end{aligned}
\end{equation*}
Then we use Young's inequality, if necessary, and combine two results to get the desired result.
\end{proof}

\begin{lemma}\label{v_Hs_est}
For $T>0$, there exists $C>0$ independent of $T$ such that
\[
\sup_{t \in [0,T]}\|v(t)\|_{H^s}^2 + \int_0^T\|\nabla v(t)\|_{H^s}^2\,dt  \leq C\mathfrak{X}_0(s)  + C\e_1^2 \mathfrak{D}(s;T).
\]
\end{lemma}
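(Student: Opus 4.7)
The plan is to carry out $L^2$ and homogeneous $\dot{H}^s$ energy estimates for $v$, add them, and use the parabolic dissipation together with the smallness $\mathfrak{X}(s;T)<\e_1^2$ so that each nonlinear contribution is organized as (a factor of size $\e_1^2$)$\times$(an integrated-in-time ingredient of $\mathfrak{D}(s;T)$).

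For the $L^2$ estimate I would pair the momentum equation in \eqref{A-1} with $v$. The transport and pressure terms vanish by $\nabla\cdot v=0$, leaving
\[
\frac12\frac{d}{dt}\|v\|_{L^2}^2+\|\nabla v\|_{L^2}^2=\int_{\R^d}\rho(u-v)\cdot v\,dx,
\]
and H\"older's inequality bounds the right-hand side by $\|\rho\|_{L^a}\|v\|_{L^b}\|u-v\|_{L^2}$ for exponents $a,b$ with $1/a+1/b=1/2$ chosen so that $\|\rho\|_{L^a}\lesssim\|\rho\|_{H^s}$ and $\|v\|_{L^b}\lesssim\|v\|_{H^s}$ via Sobolev embedding; this is possible because $s>d/2-1$. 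The smallness assumption then produces an $\e_1^2$ prefactor, leaving only the factor $\|u-v\|_{L^2}$, which is integrable in time as one of the summands of $\mathfrak{D}(s;T)$.

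For the $\dot H^s$ estimate I would apply $\nabla^s$, pair against $\nabla^s v$, and obtain
\[
\frac12\frac{d}{dt}\|\nabla^s v\|_{L^2}^2+\|\nabla^{s+1}v\|_{L^2}^2=-\int\nabla^s[(v\cdot\nabla)v]\cdot\nabla^s v\,dx+\int\nabla^s[\rho(u-v)]\cdot\nabla^s v\,dx.
\]
Splitting $\nabla^s[(v\cdot\nabla)v]=[\nabla^s,v\cdot\nabla]v+(v\cdot\nabla)\nabla^s v$, the non-commutator piece integrates to zero by incompressibility, while Lemma~\ref{lem_moser}~(vi) controls the commutator by $C\|\nabla v\|_{L^\infty}\|\nabla^s v\|_{L^2}$; Hausdorff--Young then gives $\|\nabla v\|_{L^\infty}\lesssim\||\xi|\hat v\|_{L^1_\xi}$, which is a direct ingredient of $\mathfrak{D}(s;T)$. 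For the drag source I would again invoke Lemma~\ref{lem_moser}~(vi) to distribute the $s$ derivatives between $\rho$ and $u-v$, picking Lebesgue exponents so that $\|\rho\|_{H^s}\lesssim\e_1$ can be extracted, while the remaining factor on $u-v$ is interpolated between $\|u-v\|_{L^2}$ and the higher-derivative norms $\|\nabla^2 u\|_{H^{m-2}}$ and $\|\nabla^2 v\|_{H^{m-2}}$ so that it lands inside $\mathfrak{D}(s;T)$; Young's inequality then absorbs a small multiple of $\|\nabla^{s+1}v\|_{L^2}^2$ into the left-hand side dissipation.

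Adding the $L^2$ and $\dot H^s$ estimates and integrating in time on $[0,T]$ delivers the stated bound, with the $\e_1^2$ prefactor arising as the product of $\|\rho\|_{H^s}+\|v\|_{H^s}\lesssim\e_1$ coming from $\mathfrak{X}(s;T)<\e_1^2$. The hard part will be the drag source in the $\dot H^s$ step: because the admissible range $s>d/2-1$ does not guarantee $\rho\in L^\infty$, the distribution of the $s$ derivatives between $\rho$ and $u-v$ must be tuned carefully so that all Sobolev exponents are admissible and the factor on $u-v$ can be interpolated to a quantity actually controlled in $L^1_t$ through $\mathfrak{D}(s;T)$, rather than a quantity we do not control.
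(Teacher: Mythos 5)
Your overall strategy is the same as the paper's: separate $L^2$ and $\dot H^s$ energy estimates, a commutator bound for the convection term reduced to $\|\nabla v\|_{L^\infty}\le\||\xi|\hat v\|_{L^1_\xi}$, a product estimate for the drag term in which $\|\rho\|_{H^s}\lesssim\e_1$ is extracted and the remaining $u-v$ factor is sent into the $L^1_t$-summands of $\mathfrak{D}(s;T)$, plus absorption of $\|\nabla^{s+1}v\|_{L^2}^2$ by the dissipation. Two of your choices differ but are viable: you keep all $s$ derivatives on $\rho(u-v)$ and pair against $\nabla^s v$, whereas the paper integrates by parts once and estimates $\|\nabla^{s-1}(\rho(u-v))\|_{L^2}$ against $\|\nabla^{s+1}v\|_{L^2}$; and you invoke the standard Kato--Ponce commutator plus Hausdorff--Young where the paper derives its own Fourier-side commutator estimate \eqref{comm_est_s}. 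In your version the delicate term is $\|\nabla^s\rho\|_{L^2}\|u-v\|_{L^\infty}$, which is fine because $m>d/2+1$ gives $\|u-v\|_{L^\infty}\lesssim\|u-v\|_{L^2}+\|\nabla^2 u\|_{H^{m-2}}+\|\nabla^2 v\|_{H^{m-2}}$, all controlled in $L^1_t$ by $\mathfrak{D}(s;T)$.

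The genuine gap is in your $L^2$ step. You bound $\int\rho(u-v)\cdot v\,dx$ by $\|\rho\|_{L^a}\|v\|_{L^b}\|u-v\|_{L^2}$ with $\frac1a+\frac1b=\frac12$ and both factors controlled through $H^s$ by Sobolev embedding. That embedding forces $\frac1a\ge\frac12-\frac sd$ and $\frac1b\ge\frac12-\frac sd$, hence $\frac12=\frac1a+\frac1b\ge1-\frac{2s}{d}$, i.e. $s\ge\frac d4$. The hypothesis only gives $s>m-2>\frac d2-1$, so this fails for $d=2$ (where $s$ may be arbitrarily close to $0$) and for $d=3$ (where $s$ may be close to $\frac12<\frac34$); your justification ``this is possible because $s>d/2-1$'' breaks down precisely in the low-regularity regime the lemma is designed for. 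Two fixes: (i) place $u-v$ in $L^\infty$ rather than $L^2$, giving $\|\rho\|_{L^2}\|v\|_{L^2}\|u-v\|_{L^\infty}\le C\e_1^2\lt(\|u-v\|_{L^2}+\|\nabla^2 u\|_{H^{m-2}}+\|\nabla^2 v\|_{H^{m-2}}\rt)$; or (ii) do what the paper does and read $\sup_t\|v\|_{L^2}^2$ and $\int_0^T\|\nabla v\|_{L^2}^2\,dt$ directly off the total energy identity \eqref{energy_est}, in which the two drag contributions combine into $-\int\rho|u-v|^2\,dx\le0$ and never need to be estimated. Finally, be careful with your closing Young step: if the interpolation of the $u-v$ factor lands on $\|\nabla^{s+1}v\|_{L^2}$ to the first power (an $L^2_t$ quantity) rather than on the $L^1_t$ summands of $\mathfrak{D}(s;T)$, Young's inequality leaves behind a constant of size $\e_1^{2}$ whose time integral grows like $T$; you must arrange to land either on quantities integrable in time or on the full square $\|\nabla^{s+1}v\|_{L^2}^2$ with a small prefactor.
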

\begin{proof}
It follows from \eqref{energy_est} that
\[
\begin{aligned}
		\sup_{t \in [0,T]}& \intr \rho |u|^2 \,d x + \sup_{t \in [0,T]} \intr |v|^2 \,d x + 2 \left(\int_0^T \intr\rho |u-v|^2 \,d x d t + \int_0^T \intr|\nabla v|^2 \,d x d t \right) \\
		&= \intr\rho_0 |u_0|^2 \,d x + \intr|v_0|^2 \,d x.
\end{aligned}
\]
On the other hand, we obtain from  the equations for $v$ in \eqref{A-1} that
\begin{align}\label{v_est}
	\begin{aligned}
		\frac {1}{2} \frac {d}{d t} \intr|\nabla^{s} v|^2 \,d x + \intr|\nabla^{s+1} v|^2 \,d x \leq \intr\nabla^{s}[(v \cdot \nabla)v] \cdot \nabla^{s}v \,d x + \intr|\nabla^{s-1} (\rho (u-v))||\nabla^{s+1} v| \,d x.
	\end{aligned}
\end{align}
Here, we use the commutator estimate \eqref{comm_est_s} to get
\[
	\begin{aligned}
		\left| \intr\nabla^{s}( (v \cdot \nabla)v) \cdot \nabla^{s}v \,d x \right| &= \left| \intr\lt[\nabla^{s}\nabla\cdot(v \otimes v) - v\cdot\nabla \nabla^s v \rt] \cdot \nabla^{s}v \,d x \right| \\
		&\le \|\nabla^{s}\nabla \cdot(v \otimes v) - v\cdot\nabla \nabla^s v \|_{L^2}\|\nabla^s v\|_{L^2}\\
		&\le C\lt( \||\xi|\hat v\|_{L_\xi^1} + \|\nabla^2 v\|_{H^{m-2}}\rt)\|\nabla^s v\|_{L^2}^2.
	\end{aligned}
\]
For the estimate of the second term on the right-hand side of \eqref{v_est}, we consider two cases: $s\geq 1$ and $s < 1$. When $s\ge 1$, we use Lemma \ref{lem_moser} (v) and (vi) to yield
\[
	\begin{aligned}
		\| \nabla^{s-1}(\rho(u-v)) \|_{L^2} &\leq C \| \nabla^{s-1} \rho \|_{L^{\frac {2d}{d-2(m-s-1)}}} \| u-v \|_{L^{\frac {d}{m-s-1}}} + C \| \rho \|_{L^p} \| \nabla^{s-1}(u-v) \|_{L^q} \\
		&\leq C \| \rho \|_{H^{m-2}} \| u-v \|_{L^{\frac {d}{m-s-1}}} + C \| \rho \|_{H^s} \| \nabla^{s-1+d/2-k}(u-v) \|_{L^2}  \\
		&\leq C \| \rho \|_{H^s} (\| u-v \|_{L^2} + \| \nabla^{s+1}u \|_{L^2} + \| \nabla^{s+1}v \|_{L^2}),
	\end{aligned}
\]
where $p$, $q$, and $k$ satisfy
\[
\frac d2-1 < k < \min\lt\{ \frac d2, s\rt\}, \quad \frac 1p = \frac12 - \frac kd, \quad \mbox{and}  \quad \frac1q = \frac12 - \frac{d/2-k}{d}.
\]
For $s< 1$, we estimate
\begin{equation*}
	\begin{aligned}
		\| \nabla^{s-1}(\rho(u-v)) \|_{L^2} &\leq C\| \rho(u-v) \|_{L^{\frac {2d}{d+2(1-s)}}} \\
		&\leq C \| \nabla^s \rho \|_{L^2} \| u-v \|_{L^{d}} \\
		&\leq C \| \rho \|_{H^{s}} (\| u-v \|_{L^2} + \| \nabla^{s+1}u \|_{L^2} + \| \nabla^{s+1}v \|_{L^2})
	\end{aligned}
\end{equation*}
due to Lemma \ref{lem_moser} (iii).

Hence, we use Young's inequality to obtain
\[
\begin{aligned}
		\sup_{t \in [0,T]} &\intr|\nabla^{s} v|^2 \,d x + \int_0^T \intr|\nabla^{s+1} v|^2 \,d x d t \\
		&\leq \intr|\nabla^{s} v_0|^2 \,d x + C \sup_{t \in [0,T]} \| v(t)\|_{H^{s}}^2 \int_0^T \|\nabla^2 v(t)\|_{H^{m-2}} \, d t + \int_0^T \|\nabla^{s-1}(\rho (u-v))(t)\|^2_{L^2} \,d t\\
		&\le \intr|\nabla^{s} v_0|^2 \,d x + C\e_1^2 \int_0^T \|\nabla^2 v(t)\|_{H^{m-2}} \, d t \\
		&\quad + C\e_1^3 \int_0^T \| (u-v)(t) \|_{L^2} \,d t + C\e_1^2 \int_0^T (\| \nabla^{s+1}u(t) \|_{L^2}^2 + \| \nabla^{s+1}v(t) \|_{L^2}^2) \,d t\\
		&\le \intr|\nabla^{s} v_0|^2 \,d x + C\e_1^2 \int_0^T \|\nabla^2 v(t)\|_{H^{m-2}} \, d t  + C\e_1^3 \int_0^T \| \nabla^m u(t) \|_{L^2}\,dt \\
		&\quad + C\e_1^3 \int_0^T \| (u-v)(t) \|_{L^2} \,d t + C\e_1^2 \int_0^T  \| \nabla^{s+1}v(t) \|_{L^2}^2 \,d t,\\
\end{aligned}
\]
where we used
\[
\|\nabla^{s+1} u\|_{L^2} \le \|\nabla^m u\|_{L^2}^{\frac{s+1}{m}}\|u\|_{L^2}^{\frac{m-s-1}{m}}, \quad \frac{s+1}{m} \in \lt(\frac12, 1\rt].
\]
Finally, we take $\e_1 > 0$ small enough to conclude the desired inequality.
\end{proof}

We finally provide the bound estimate of $v$ in $L^{\frac{d+4}{2}}(0,T;L^2)$.
\begin{lemma}\label{v_lp_est}
For $T>0$, there exists $C>0$ independent of $T$ such that 
\[
\|v\|_{L^{\frac{d+4}{2}}(0,T;L^2)} \leq C\lt(\sqrt{\mathfrak{X}_0(s)} + \|v_0\|_{L^1}\rt) + C\e_1 \mathfrak{D}(s;T).
\]
\end{lemma}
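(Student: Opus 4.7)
The plan is to pass to the Duhamel (mild) representation of $v$. Writing the $v$-equation as $\partial_t v - \Delta v = -\bbp[(v\cdot\nabla)v] + \bbp[\rho(u-v)]$ and iterating, I obtain
\[
v(t) = e^{t\Delta}v_0 - \int_0^t e^{(t-\tau)\Delta}\bbp\nabla\cdot(v\otimes v)(\tau)\,d\tau + \int_0^t e^{(t-\tau)\Delta}\bbp[\rho(u-v)](\tau)\,d\tau,
\]
so it suffices to estimate each of the three contributions in $L^{(d+4)/2}_t((0,T);L^2_x)$.

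For the free evolution I split at $t=1$. On $(0,1)$, Plancherel gives $\|e^{t\Delta}v_0\|_{L^2} \leq \|v_0\|_{L^2} \leq \sqrt{\mathfrak{X}_0(s)}$, and on $(1,T)$ Hausdorff--Young yields $\|\hat v_0\|_{L^\infty_\xi} \leq \|v_0\|_{L^1}$, which combined with the Gaussian integral $\|e^{-t|\xi|^2}\|_{L^2_\xi} \leq Ct^{-d/4}$ produces $\|e^{t\Delta}v_0\|_{L^2} \leq Ct^{-d/4}\|v_0\|_{L^1}$. Since $d \geq 2$ ensures $d(d+4)/8 > 1$, the tail $\int_1^\infty t^{-d(d+4)/8}\,dt$ is finite, and the free-evolution contribution is bounded by $C(\sqrt{\mathfrak{X}_0(s)} + \|v_0\|_{L^1})$.

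For the convection and drag terms I pass to Fourier. Using $\|\widehat{\nabla\cdot(v\otimes v)}\|_{L^\infty_\xi} \leq |\xi|\,\|v\otimes v\|_{L^1_x} \leq |\xi|\,\|v\|_{L^2_x}^2$ together with the boundedness of $\bbp$, Minkowski in $\xi$ gives
\[
\lt\|\int_0^t e^{(t-\tau)\Delta}\bbp\nabla\cdot(v\otimes v)(\tau)\,d\tau\rt\|_{L^2_x} \leq C\int_0^t (t-\tau)^{-(d+2)/4}\|v(\tau)\|_{L^2_x}^2\,d\tau.
\]
The energy identity \eqref{energy_est} combined with the smallness hypothesis yields $\|v\|_{L^\infty_t L^2_x} \leq C\sqrt{E(0)} \leq C\e_1$. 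Interpolating $\|v\|_{L^{d+4}_t L^2_x}^2 \leq \|v\|_{L^\infty_t L^2_x}\,\|v\|_{L^{(d+4)/2}_t L^2_x} \leq C\e_1\|v\|_{L^{(d+4)/2}_t L^2_x}$ and applying the Hardy--Littlewood--Sobolev inequality in time to the convolution kernel $(t-\tau)^{-(d+2)/4}$ produces a bound of order $C\e_1 \|v\|_{L^{(d+4)/2}_t L^2_x} \leq C\e_1 \mathfrak{D}(s;T)$ for the convective piece. The drag piece is handled analogously: $\|\widehat{\rho(u-v)}\|_{L^\infty_\xi} \leq \|\rho(u-v)\|_{L^1_x} \leq \|\rho\|_{L^2}\|u-v\|_{L^2} \leq C\e_1 \|u-v\|_{L^2}$ reduces the problem to controlling the time-convolution of $(t-\tau)^{-d/4}$ against $\|u-v\|_{L^2}$; since $\|u-v\|_{L^2}\in L^1_t \subset \mathfrak{D}(s;T)$, Young's convolution inequality delivers $C\e_1 \mathfrak{D}(s;T)$ as well.

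The hardest step should be the convection estimate: the time kernel $(t-\tau)^{-(d+2)/4}$ is borderline when $d=2$ (exponent exactly $1$) and has exponent exceeding $1$ for $d\geq 3$, so a direct Young/HLS inequality is unavailable. The remedy I would use is to interpolate the spatial $L^1_x$ bound $\|v\otimes v\|_{L^1_x} \leq \|v\|_{L^2_x}^2$ between the $L^\infty_t L^2_x$ energy bound and the very norm $\|v\|_{L^{(d+4)/2}_t L^2_x}$ being estimated, which effectively trades spatial for temporal integrability and renders the kernel compatible with a sharp convolution estimate. The smallness of $\e_1$ is then indispensable to absorb the resulting copy of $\|v\|_{L^{(d+4)/2}_t L^2_x}$ from the right- into the left-hand side and close the bootstrap.
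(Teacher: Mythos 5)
Your decomposition into free evolution, convection, and drag via the Duhamel formula, and your treatment of the free part (split at $t=1$, Plancherel for small times, $\|\hat v_0\|_{L^\infty_\xi}\le\|v_0\|_{L^1}$ and the Gaussian $L^2_\xi$ integral for large times) coincide with the paper. The nonlinear terms, however, contain a genuine gap. By estimating $v\otimes v$ and $\rho(u-v)$ only in $L^1_x$ you force the heat semigroup to absorb the full $L^1_x\to L^2_x$ smoothing, which produces the time kernels $(t-\tau)^{-(d+2)/4}$ and $(t-\tau)^{-d/4}$. The first is not locally integrable near $\tau=t$ for any $d\ge 2$, and the second fails to lie in $L^{(d+4)/2}$ near the diagonal for any $d\ge2$ (and is not even in $L^1_{\mathrm{loc}}$ for $d\ge4$). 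Your proposed remedy --- interpolating the \emph{temporal} integrability of $\|v(\tau)\|_{L^2}^2$ --- cannot repair this: every version of Young's or the Hardy--Littlewood--Sobolev inequality for a convolution $K\ast g$ requires $K$ to belong to some $L^r_{\mathrm{loc}}$ with $r\ge1$, and no improvement of $g$ lowers that threshold. The same objection applies to your drag estimate, where $\|u-v\|_{L^2}\in L^1_t$ paired with $K=t^{-d/4}$ would require $K\in L^{(d+4)/2}(0,1)$, i.e.\ $d(d+4)<8$, which is false for all $d\ge2$.

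The missing idea is a near/far splitting of the Duhamel integral at $|t-\tau|=1$, with a \emph{different spatial estimate} of the integrand on the near part so that the kernel singularity never appears (or is only $(t-\tau)^{-1/2}$). This is exactly what the paper does: for the convection term it uses $\|e^{-|\xi|^2(t-\tau)}P(\xi)\rwhat{(v\cdot\nabla)v}\|_{L^2}\le\|v\|_{L^2}\|\nabla v\|_{L^\infty}$ (no kernel at all) together with $C(t-\tau)^{-1/2}\|v\otimes v\|_{L^2}\le C(t-\tau)^{-1/2}\|v\|_{L^2}^{\frac{d+4}{d+2}}\|\nabla^2v\|_{H^{m-2}}^{\frac{d}{d+2}}$, reserving the $L^1_x$-based decay only for $t-\tau\ge1$ where $(t-\tau)^{-1/2}\in L^{(d+4)/2}(1,\infty)$; for the drag term it pairs the far-field bound $C(t-\tau)^{-d/4}\|\rho\|_{L^2}\|u-v\|_{L^2}$ with the kernel-free near-field bound $\|\rho(u-v)\|_{L^2}\le C\|\rho\|_{H^s}(\|u-v\|_{L^2}+\|\nabla^2u\|_{H^{m-2}}+\|\nabla^2v\|_{H^{m-2}})$. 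This is precisely why the quantities $\int_0^T\|\nabla v\|_{L^\infty}$, $\int_0^T\|\nabla^2u\|_{H^{m-2}}$, $\int_0^T\|\nabla^2v\|_{H^{m-2}}$, and $\int_0^T\|u-v\|_{L^2}$ --- all summands of $\mathfrak{D}(s;T)$ --- must appear on the right-hand side; your argument, which never invokes them, cannot produce the stated bound. Your final absorption of $C\e_1\|v\|_{L^{(d+4)/2}(0,T;L^2)}$ by smallness of $\e_1$ is legitimate and matches the paper, but it does not rescue the convolution estimates preceding it.
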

\begin{proof}
Taking the Fourier transform to the equations of $v$ in \eqref{A-1}, we find
\bq\label{v_trans}
	\begin{aligned}
		\partial_t \hat v - |\xi|^2 \hat v + P(\xi) \rwhat{(v \cdot \nabla)v} = P(\xi)\rwhat{\rho(u-v)},
	\end{aligned}
\eq
where $P(\xi) = \mathbb{I} - \frac{\xi \otimes \xi}{|\xi|^2}$ denotes the Leray projection. This gives
\[
	\begin{aligned}
		\hat v(t) = e^{-|\xi|^2t} \hat v_0 - \int_0^t e^{-|\xi|^2(t-\tau)} P(\xi) \rwhat{(v \cdot \nabla)v}(\tau) \,d \tau + \int_0^t e^{-|\xi|^2(t-\tau)} P(\xi) \rwhat{\rho(u-v)}(\tau) \,d \tau,
	\end{aligned}
\]
and from which, we obtain
\begin{equation*}
	\begin{aligned}
		\| \hat v \|_{L^p(0,T;L^2)} &\leq \| e^{-|\xi|^2t} \hat v_0 \|_{L^p(0,T;L^2)} + \left\| \int_0^t e^{-|\xi|^2(t-\tau)} P(\xi) \rwhat{ (v \cdot \nabla)v}(\tau) \,d  \tau \right\|_{L^p(0,T;L^2)} \\
		&\quad + \left\| \int_0^t e^{-|\xi|^2(t-\tau)} P(\xi)\rwhat{  \rho(u-v)} \,d  \tau \right\|_{L^p(0,T;L^2)}\\
		&=: \sfI_1 +\sfI_2 + \sfI_3.
	\end{aligned}
\end{equation*}
First, for $\sfI_1$, we use
\begin{equation*}
	\begin{aligned}
		\| e^{-|\xi|^2t} \hat v_0 \|_{L^2} \leq \| v_0 \|_{L^2} \quad \mbox{for} \quad t \in(0,1)
	\end{aligned}
\end{equation*}
and 
\begin{equation*}
	\begin{aligned}
		\| e^{-|\xi|^2t} \hat v_0 \|_{L^2} \leq \| \hat v_0 \|_{L^{\infty}} \| e^{-|\xi|^2t} \|_{L^2} \leq Ct^{-\frac {d}{4}} \| v_0 \|_{L^1}  \quad \mbox{for} \quad t \in [1,\infty)
	\end{aligned}
\end{equation*}
to have
\[
		\sfI_1 \leq C \lt(\| v_0 \|_{L^2} + \| v_0 \|_{L^1}\rt).
\]
For $\sfI_2$, since

\[		\left\|  e^{-|\xi|^2(t-\tau)} P(\xi) \rwhat{  (v \cdot \nabla)v} (\tau)\right\|_{L^2} \leq  \| v(\tau) \|_{L^2} \| \nabla v(\tau) \|_{L^{\infty}} 
\]
and
\[
	\begin{aligned}
		\left\| e^{-|\xi|^2(t-\tau)} P(\xi) \rwhat{  (v \cdot \nabla)v}(\tau)  \right\|_{L^2} &\leq C (t-\tau)^{-\frac {1}{2}} \| (v \otimes v)(\tau) \|_{L^{2}}  \\
		&\leq C (t-\tau)^{-\frac {1}{2}} \| v (\tau)\|_{L^2}^{\frac {d+4}{d+2}} \| \nabla^2 v(\tau) \|_{H^{m-2}}^{\frac {d}{d+2}}
	\end{aligned}
\]
hold, we use Minkowski's integral inequality and Young's convolution inequality to see
\[\begin{aligned}
		\sfI_2&\leq C \sup_{t \in [0,T]} \| v (t)\|_{L^2} \int_0^T \| \nabla v (t)\|_{L^{\infty}} \,d  t +C \int_0^T \| v(\tau) \|_{L^2}^{\frac {d+4}{d+2}} \| \nabla^2 v(\tau) \|_{H^{m-2}}^{\frac {d}{d+2}} \left\| (\cdot-\tau)^{-\frac {1}{2}} \right\|_{L^p(\tau+1,T)} d  \tau \\
		&\leq C \e_1 \int_0^T \| \nabla v(t) \|_{L^{\infty}} \,d  t +C \int_0^T \| v(t) \|_{L^2}^{\frac {d+4}{d+2}} \| \nabla^2 v (t)\|_{H^{m-2}}^{\frac {d}{d+2}}\, dt.
\end{aligned}\]
Note that the second term on the right-hand side of the above can be estimated as 
\begin{align}\label{est_vv}
\begin{aligned}
\int_0^T \| v(t) \|_{L^2}^{\frac {d+4}{d+2}} \| \nabla^2 v(t) \|_{H^{m-2}}^{\frac {d}{d+2}} \,d t 
&\leq \lt(\int_0^T \|v(t)\|_{L^2}^{\frac{d+4}{2}}\,dt\rt)^{\frac{2}{d+2}} \lt(\int_0^T \|\nabla^2 v(t)\|_{H^{m-2}}\,dt \rt)^{\frac{d}{d+2}}\cr
&\leq C\e_1 \|v\|_{L^p(0,T;L^2)}^{\frac{2}{d+2}} \lt(\int_0^T \|\nabla^2 v(t)\|_{H^{m-2}}\,dt \rt)^{\frac{d}{d+2}}\cr
&\leq  C\e_1 \|v\|_{L^p(0,T;L^2)} + C\e_1\int_0^T \|\nabla^2 v(t)\|_{H^{m-2}}\,dt,
\end{aligned}
\end{align}
due to $p = \frac {d+4}{2} > 2$. Thus, we obtain
\[
\sfI_2 \le C\e_1 \|v\|_{L^p(0,T;L^2)} + C\e_1\int_0^T\|\nabla^2 v(t)\|_{H^{m-2}}\,dt + C\e_1 \int_0^T \|\nabla v(t)\|_{L^\infty}\,dt.
\]
For $\sfI_3$, we estimate
\[
	\begin{aligned}
		\left\| e^{-|\xi|^2(t-\tau)} P(\xi)\rwhat{ \rho(u-v)} (\tau) \right\|_{L^2} &\leq C \| e^{-|\xi|^2 \frac {t-\tau}{2}} \|_{L^2} \| \rwhat {\rho (u-v)}(\tau) \|_{L^{\infty}}  \\
		&\leq C (t-\tau)^{-\frac {d}{4}} \| \rho(\tau) \|_{L^2} \| (u-v)(\tau) \|_{L^2}
	\end{aligned}
\]
and
\[
	\begin{aligned}
		\left\| e^{-|\xi|^2(t-\tau)} P(\xi)\rwhat{  \rho(u-v)} (\tau) \right\|_{L^2} & \leq  \| \rho(u-v)(\tau) \|_{L^2}  \\
		&\leq \|\rho(\tau)\|_{L^2}\|(u-v)(\tau)\|_{L^\infty}\\
		&\le C  \|\rho(\tau)\|_{H^s}\lt(\|(u-v)(\tau)\|_{L^2} + \|\nabla^2 u(\tau)\|_{H^{m-2}} + \|\nabla^2 v(\tau)\|_{H^{m-2}}\rt).
			\end{aligned}
\]
Hence, we have
\[\begin{aligned}
\sfI_3&\le C\e_1 \int_0^T \|(u-v)(t)\|_{L^2}\,dt + C\e_1 \int_0^T \|\nabla^2 u(t)\|_{H^{m-2}}\,dt + C\e_1 \int_0^T \|\nabla^2 v(t)\|_{H^{m-2}}\,dt.
\end{aligned}\]
Therefore, we gather all the estimates for $\sfI_i$'s and use the smallness of $\e_1$ to complete the proof.
\end{proof}

%
%
%
%
%

\subsection{Estimates of $\mathfrak{D}(s;T)$}
As stated in the previous subsection, it is clear that the uniform-in-$T$ estimates for 
\[\begin{aligned}
&\nabla u \in L^1(0,T;L^{\infty}), \quad  |\xi| \hat v \in L^1(0,T; L^1_\xi),\\
&\nabla^k v \in L^1(0,T;L^2)\ \mbox{ for }  k \in [2, m], \quad \mbox{and} \quad u-v \in L^1(0,T;L^2)
\end{aligned}\]
are necessary to get the uniform-in-$T$ bound for $\mathfrak{X}(s,T)$. So we proceed to the estimates for $u$. Above all, we investigate the Lipschitz estimate for $u$ and $v$.

\begin{lemma}\label{u_lip_est}
For $T>0$, there exists $C>0$ independent of $T>0$ such that
\[
\begin{aligned}
\int_0^T \|\nabla u(t)\|_{L^{\infty}} \,d t \leq C\| \nabla u_0\|_{L^\infty} + C\int_0^T  \| \nabla v(t) \|_{L^{\infty}} \,d t.
\end{aligned}
\]
\end{lemma}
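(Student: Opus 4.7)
The idea is to propagate $|\nabla u|$ along characteristics of the velocity field $u$, using the friction term $-(u-v)$ in the momentum equation as a linear damping in $\nabla u$, with $\nabla v$ playing the role of a forcing term.

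\textbf{Step 1: Derivation of the pointwise ODE.} Starting from the equation $\partial_t u + (u\cdot\nabla)u = -(u-v)$, I differentiate with respect to $x_j$ to obtain
\[
\partial_t (\partial_j u_i) + u_k \partial_k (\partial_j u_i) = -(\partial_j u_k)(\partial_k u_i) - \partial_j u_i + \partial_j v_i.
\]
Let $X(t;x_0)$ denote the characteristic flow of $u$ with $X(0;x_0)=x_0$. Along $X(\cdot;x_0)$, multiplying by $\partial_j u_i$ and summing produces
\[
\frac{d}{dt}|\nabla u|(X(t;x_0),t) \leq C|\nabla u|^2 - |\nabla u| + |\nabla v|.
\]

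\textbf{Step 2: Absorbing the quadratic term.} By the Sobolev embedding in Lemma~\ref{lem_moser}(ii), together with $m-1 > d/2$, we have
\[
\|\nabla u(t)\|_{L^\infty} \leq C\|\nabla u(t)\|_{H^{m-1}} \leq C\|u(t)\|_{H^m} \leq C\e_1,
\]
so $C|\nabla u|^2 \leq C\e_1 |\nabla u| \leq \tfrac{1}{2}|\nabla u|$ provided $\e_1$ is small. Therefore along the characteristic,
\[
\frac{d}{dt}|\nabla u|(X(t;x_0),t) + \frac12 |\nabla u|(X(t;x_0),t) \leq \|\nabla v(\cdot,t)\|_{L^\infty}.
\]

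\textbf{Step 3: Duhamel representation and taking the supremum.} Solving the above scalar ODE by Duhamel (integrating factor $e^{t/2}$) yields
\[
|\nabla u|(X(t;x_0),t) \leq e^{-t/2}|\nabla u_0(x_0)| + \int_0^t e^{-(t-\tau)/2}\|\nabla v(\cdot,\tau)\|_{L^\infty}\,d\tau.
\]
Since the flow map $x_0 \mapsto X(t;x_0)$ is a diffeomorphism of $\R^d$ onto itself, taking the supremum over $x_0$ on both sides gives
\[
\|\nabla u(t)\|_{L^\infty} \leq e^{-t/2}\|\nabla u_0\|_{L^\infty} + \int_0^t e^{-(t-\tau)/2}\|\nabla v(\cdot,\tau)\|_{L^\infty}\,d\tau.
\]

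\textbf{Step 4: Time integration.} Integrating this bound on $[0,T]$ and applying Fubini's theorem to the convolution term,
\[
\int_0^T \|\nabla u(t)\|_{L^\infty}\,dt \leq 2\|\nabla u_0\|_{L^\infty} + \int_0^T \|\nabla v(\cdot,\tau)\|_{L^\infty} \int_\tau^T e^{-(t-\tau)/2}\,dt\,d\tau \leq 2\|\nabla u_0\|_{L^\infty} + 2\int_0^T \|\nabla v(t)\|_{L^\infty}\,dt,
\]
which is the claim. The only mildly delicate point is the absorption of the quadratic term in Step 2, and this is precisely where the smallness hypothesis $\mathfrak{X}(s;T)<\e_1^2$ together with the Sobolev embedding for $u$ is used; everything else is a routine transport-plus-damping argument.
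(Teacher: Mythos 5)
Your proof is correct and uses the same mechanism as the paper's: the drag term supplies linear damping for $\nabla u$, the quadratic stretching term is absorbed via the smallness $\|\nabla u\|_{L^\infty}\le C\|u\|_{H^m}\le C\e_1$, and Duhamel plus time integration gives the claim. The only difference is technical and immaterial: the paper derives the same differential inequality $\frac{d}{dt}\|\nabla u\|_{L^p}+\frac12\|\nabla u\|_{L^p}\le C\|\nabla v\|_{L^p}$ from an $L^p$ energy estimate with a $p$-independent constant and then sends $p\to\infty$, whereas you work pointwise along the Lagrangian characteristics of $u$ (which are well defined since $m>d/2+1$ makes $u$ Lipschitz); both routes are valid.
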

\begin{proof}
From the $u$ equation, we have for any $p \in [2,\infty)$ that
\[
\frac {1}{p} \frac{d}{dt} \intr |\nabla u|^p \,dx + \intr |\nabla u|^p \,dx \le C\|\nabla u\|_{L^\infty}\|\nabla u\|_{L^p}^p + \intr |\nabla v||\nabla u|^{p-1} \,dx.
\]
Thus,
\begin{equation*}
	\begin{aligned}
		\frac{d}{dt} \|\nabla u\|_{L^p} + \frac12\|\nabla u\|_{L^p} \le C\| \nabla v \|_{L^p},
	\end{aligned}
\end{equation*}
and subsequently, Gr\"onwall's inequality implies
\begin{equation*}
	\begin{aligned}
		\|\nabla u(t)\|_{L^p} \leq e^{-\frac t2} \|\nabla u_0\|_{L^p}  + C\int_0^t e^{-\frac{(t-\tau)}2} \| \nabla v(\tau) \|_{L^p} \,d \tau,
	\end{aligned}
\end{equation*}
where $C>0$ is independent of $p$. Then tending $p \to \infty$ gives
\[
	\begin{aligned}
		\|\nabla u(t)\|_{L^{\infty}} \leq e^{-t} \|\nabla  u_0\|_{L^\infty}  +C\int_0^t e^{-(t-\tau)} \| \nabla v(\tau) \|_{L^{\infty}} \,d \tau,
	\end{aligned}
\]
and hence, we integrate the above relation with respect to $t$ on $[0,T]$ to get the desired inequality.
\end{proof}

\begin{lemma}\label{v_lip_est}
For $T>0$, there exists $C>0$ independent of $T>0$ such that
\[
\int_0^T  \| |\xi| \hat v(t) \|_{L^1_\xi}\,dt \leq C\lt( \sqrt{\mathfrak{X}_0(s)} + \|v_0\|_{L^1}\rt) + C\e_1 \mathfrak{D}(s;T) +  C\int_0^T \|\nabla v(t)\|_{H^s}^2\,dt .
\]
\end{lemma}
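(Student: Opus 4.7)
The plan is to mimic the mild-formulation strategy of Lemma~\ref{v_lp_est}. Starting from the Fourier-side equation \eqref{v_trans}, I would write
\[
|\xi|\hat v(t) \;=\; |\xi|e^{-|\xi|^2 t}\hat v_0 \;-\; \int_0^t |\xi|e^{-|\xi|^2(t-\tau)}P(\xi)\rwhat{(v\cdot\nabla)v}(\tau)\,d\tau \;+\; \int_0^t |\xi|e^{-|\xi|^2(t-\tau)}P(\xi)\rwhat{\rho(u-v)}(\tau)\,d\tau,
\]
take $\|\cdot\|_{L^1_\xi}$, integrate in $t\in(0,T)$, and call the three resulting contributions $\sfJ_1,\sfJ_2,\sfJ_3$. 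The uniform tool throughout is Fubini combined with $\int_\tau^T|\xi|^\alpha e^{-|\xi|^2(t-\tau)}\,dt\ls |\xi|^{\alpha-2}$ for $\alpha\in\{1,2\}$, which converts each $\sfJ_i$ into an integral of the form $\intr g(\xi)/|\xi|\,d\xi$. The frequency dichotomy $|\xi|<1$ vs.\ $|\xi|\geq 1$ then reduces matters to: a low-frequency bound using $\|\hat g\|_{L^\infty_\xi}\leq\|g\|_{L^1_x}$ together with the integrability of $|\xi|^{-1}$ near the origin (valid because $d\geq 2$); and a high-frequency bound via Cauchy--Schwarz with weight $|\xi|^{-(s+1)}\in L^2_\xi(|\xi|\geq 1)$, which is admissible because $s+1>d/2$ (thanks to $s>m-2>d/2-1$).

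For $\sfJ_1$ this strategy applied to $g=v_0$ immediately yields $\sfJ_1\ls \|v_0\|_{L^1}+\|v_0\|_{H^s}\ls \sqrt{\mathfrak X_0(s)}+\|v_0\|_{L^1}$. For $\sfJ_3$ the same bookkeeping applied to $g=\rho(u-v)$, combined with the elementary bound $\|\rho(u-v)\|_{L^1}\leq\|\rho\|_{L^2}\|u-v\|_{L^2}\leq\e_1\|(u-v)\|_{L^2}$ on the low side and with the product Sobolev estimate of $\|\rho(u-v)\|_{H^s}$ already carried out in the proof of Lemma~\ref{v_Hs_est} on the high side, gives after integration in $t$ the expected $\sfJ_3\ls \e_1\mathfrak D(s;T)$.

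The main effort goes into $\sfJ_2$. Exploiting $\nabla\cdot v=0$ to write $(v\cdot\nabla)v=\nabla\cdot(v\otimes v)$, i.e.\ $\rwhat{(v\cdot\nabla)v}=i\xi\cdot\widehat{v\otimes v}$, Fubini with $\int_\tau^T|\xi|^2 e^{-|\xi|^2(t-\tau)}\,dt\leq 1$ produces the clean reduction
\[
\sfJ_2 \;\leq\; \int_0^T \|\widehat{v\otimes v}(\tau)\|_{L^1_\xi}\,d\tau.
\]
Splitting the integrand at $|\xi|=1$, the high-frequency piece is controlled via Cauchy--Schwarz by $C\|\nabla^{s+1}(v\otimes v)\|_{L^2}$ and then, by the product Sobolev inequality (Lemma~\ref{lem_moser} (iv) with $s_1=s_2=s+1$, valid since $s\geq d/2-1$), by $C\|v\|_{H^{s+1}}^2$; the low-frequency piece is controlled on the ball by $C\|v\otimes v\|_{L^2}\ls \|v\|_{L^4}^2$. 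The task is then to show that upon integration in $t$ these two quantities are absorbed into $C\e_1\mathfrak D(s;T)+C\int_0^T\|\nabla v\|_{H^s}^2\,dt$. My plan is to distribute the two copies of $v$ using Gagliardo--Nirenberg interpolation (Lemma~\ref{lem_moser} (v)) so that one copy of $v$ supplies an $\e_1$-factor from the a priori smallness $\|v\|_{H^s}\leq\e_1$ while the other supplies an $H^s$-derivative that is then paired with the reservoir $\int_0^T\|\nabla v\|_{H^s}^2\,dt$ via Cauchy--Schwarz in time; where the interpolation instead produces a power of $\|v\|_{L^2}$ that cannot be converted to an $H^s$-derivative, the mixed-norm bound $\|v\|_{L^{(d+4)/2}(0,T;L^2)}\leq\e_1$ from Lemma~\ref{v_lp_est} supplies the missing time-integrability.

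The hardest step, and the only one requiring real care beyond the routine Fubini--Cauchy--Schwarz paradigm above, is this final low-frequency convection estimate: it is precisely where the term $\int_0^T\|\nabla v\|_{H^s}^2\,dt$ on the right-hand side originates, playing the role of a dissipative reservoir that absorbs the borderline part of $\|\widehat{v\otimes v}\|_{L^1_\xi}$ which cannot be made $\e_1$-small on its own through the a priori bound $\mathfrak X(s;T)<\e_1^2$ alone.
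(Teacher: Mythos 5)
Your overall architecture (mild formulation \eqref{v_trans}, Fubini with $\int_\tau^T|\xi|^\alpha e^{-|\xi|^2(t-\tau)}dt\ls|\xi|^{\alpha-2}$, frequency splitting at $|\xi|=1$) is exactly the paper's, and your $\sfJ_1$ is correct. Two issues, one minor and one serious. Minor: for the high-frequency part of $\sfJ_3$ you cannot invoke the estimate from Lemma \ref{v_Hs_est}, which controls $\|\nabla^{s-1}(\rho(u-v))\|_{L^2}$; after Cauchy--Schwarz the admissible weight $|\xi|^{-1-\sigma}\in L^2(|\xi|\ge1)$ forces $\sigma>d/2-1$, and $s-1$ can be as small as $d/2-2+\epsilon$. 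You must either estimate $\|\rho(u-v)\|_{\dot H^{s}}\ls\|\rho\|_{H^s}\|u-v\|_{H^m}$ directly (Lemma \ref{lem_moser} (iv), which works since $m>d/2$), or do what the paper does: Young's convolution inequality in frequency, $\int_{|\xi|\ge1}|\xi|^{-1}|\rwhat{\rho(u-v)}|\,d\xi\le\||\xi|^{-1}\|_{L^{p'}(|\xi|\ge1)}\|\hat\rho\|_{L^p}\|\rwhat{(u-v)}\|_{L^1}$.

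The serious gap is in $\sfJ_2$, precisely at the step you flag as hardest. The absorption mechanism you describe does not close uniformly in $T$. (a) Pairing a single power of $\|\nabla v\|_{H^s}$ with the reservoir ``via Cauchy--Schwarz in time'' yields $\e_1\sqrt{T}\,\lt(\int_0^T\|\nabla v\|_{H^s}^2dt\rt)^{1/2}$, which destroys $T$-independence; the reservoir only absorbs terms that are exactly quadratic in $\|\nabla v\|_{H^s}$. (b) Your low-frequency piece $\int_0^T\|v\|_{L^4}^2\,dt$ is scaling-rigid: any Gagliardo--Nirenberg/H\"older-in-time distribution of $\|v\|_{L^4}^2\ls\|v\|_{L^2}^{2-\frac{d}{2\ell}}\|\nabla^\ell v\|_{L^2}^{\frac{d}{2\ell}}$ that keeps the derivative factor in $L^2_t$ forces the remaining factor to be $\int_0^T\|v\|_{L^2}^2\,dt$, which is \emph{not} controlled; and the mixed norm $\|v\|_{L^{(d+4)/2}(0,T;L^2)}\le\e_1$ cannot rescue it, since $(d+4)/2>2$ and H\"older back down to exponent $2$ costs a positive power of $T$. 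The paper's resolution is different in kind: it bounds $\|\rwhat{v\otimes v}\|_{L^1}\le\|\hat v\|_{L^1}^2$ globally (no frequency split), interpolates $\|\hat v\|_{L^1}\ls\|v\|_{L^2}^{1-\frac{d}{2\ell}}\|\nabla^\ell v\|_{L^2}^{\frac{d}{2\ell}}$, and then — via a dimension-dependent rearrangement of exponents and Young's inequality — lands on $\|\hat v\|_{L^1}^2\ls\e_1\|\nabla^2v\|_{H^{m-2}}+\|\nabla^{s+1}v\|_{L^2}^2$. The crucial point is that the sub-quadratic leftover is a \emph{single} power of $\|\nabla^2v\|_{H^{m-2}}$ with an $\e_1$ prefactor, which is absorbed by the $L^1_t$ component $\int_0^T\|\nabla^2v\|_{H^{m-2}}dt$ of $\mathfrak{D}(s;T)$ — not by the $L^2_t$ reservoir and not by any $L^2_tL^2_x$ bound on $v$ itself. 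Without this routing into the $L^1$-in-time dissipation, the estimate does not close.
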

\begin{remark} Since $\| \nabla v \|_{L^{\infty}} \leq \| |\xi| \hat v \|_{L^1_\xi}$, Lemma \ref{v_lip_est} directly gives the bound estimate of $\| \nabla v \|_{L^1(0,T;L^{\infty})}$.
\end{remark}
\begin{proof}[Proof of Lemma \ref{v_lip_est}]
We use \eqref{v_trans} to estimate
\[\begin{aligned}
		\int_0^T \| |\xi| \hat v (t) \|_{L^1} \,d t &\leq \int_0^T \| |\xi| e^{-|\xi|^2t} \hat v_0 \|_{L^1} \,d t \\
		&\quad + \int_0^T \left\| \int_0^t |\xi| e^{-|\xi|^2(t-\tau)} P(\xi) \rwhat{(v \cdot \nabla)v} (\tau) \,d \tau \right\|_{L^1} d t \\
		&\quad + \int_0^T \left\| \int_0^t |\xi| e^{-|\xi|^2(t-\tau)} P(\xi) \rwhat{\rho(u-v)}(\tau) \,d \tau \right\|_{L^1} d t\\
		&=: \sfJ_1 + \sfJ_2+ \sfJ_3.
\end{aligned}\]
For $\sfJ_1$, one uses Fubini's theorem to attain
\begin{equation*}
	\begin{aligned}
		\sfJ_1 &=  \int_0^T \int_{B(0;1)} e^{-|\xi|^2t} |\xi| |\hat v_0 | \,d \xi d t + \int_0^T \int_{\R^d \setminus B(0;1)} e^{-|\xi|^2t} |\xi| |\hat v_0 | \,d \xi d t \\
		&\leq \| \hat  v_0 \|_{L^{\infty}} \int_{B(0;1)} \frac {1}{|\xi|} \lt(\int_0^T |\xi|^2 e^{-|\xi|^2t} \,dt\rt) d \xi + \int_{\R^d \setminus B(0,1)}\lt(\int_0^T |\xi|^2 e^{-|\xi|^2t} \,d t\rt)\, \frac {|\hat  v_0 |}{|\xi|^{1-\epsilon/2}} \,d \xi \\
		&\leq C\lt(\| v_0 \|_{L^1} + \| v_0 \|_{H^s}\rt),
	\end{aligned}
\end{equation*}
where $\epsilon>0$ satisfies $s>\frac {d}{2} -1 + \epsilon$.

For $\sfJ_2$, we may use $|P(\xi)| \leq 1$ and $(v \cdot \nabla)v = \nabla \cdot (v \otimes v)$ to get
\[
\sfJ_2\leq C\int_0^T \| \hat v(t) \|_{L^1}^2 \,d t.
\]
Since \cite[Lemma 2.1]{JKLpre} implies
\begin{equation}\label{intp_1}
	\begin{aligned}
		\| \hat  v \|_{L^1} \leq C \| \hat  v \|_{L^2}^{1-\frac {d}{2\ell}} \| \rwhat{  \nabla^\ell v} \|_{L^2}^{\frac {d}{2\ell}},
	\end{aligned}
\end{equation}
whenever $d/2\ell \in (0,1)$. Here we separately estimate depending on the dimension.

\vspace{.2cm}

\noindent $\diamond$ (Case (i): $d\ge 4$) Since $s+1 >m-1 > \frac d2 \ge 2$, when $ s+1< d$, we have 
\[
\frac{d}{s+1} = \lt(2-\frac{d}{s+1} \rt)\cdot 1 + \lt(\frac{d}{s+1}-1 \rt)\cdot 2\in (1,2)
\] 
and use Young's inequality to get
\[\begin{aligned}
\|\hat v\|_{L^1}^2 &\le C\|v\|_{L^2}^{2-\frac{d}{s+1}}\|\nabla^{s+1}v\|_{L^2}^{\frac{d}{s+1}} \\
&\le C\lt(\e_1\|\nabla^{s+1}v\|_{L^2}\rt)^{2-\frac{d}{s+1}}\|\nabla^{s+1}v\|_{L^2}^{2\lt(\frac{d}{s+1}-1\rt)} \\
&\le C\e_1 \|\nabla^2 v\|_{H^{m-2}} + C\|\nabla^{s+1} v\|_{L^2}^2.
\end{aligned}\]
When $s+1 \ge d$, the relation $m \ge s+1 \ge d$ gives
\[
\|\hat v\|_{L^1}^2 \le C\|v\|_{L^2}\|\nabla^d v\|_{L^2} \le C\e_1 \|\nabla^2 v\|_{H^{m-2}}.
\]
In either case, one has
\bq\label{Fv_est1}
\|\hat v\|_{L^1}^2 \le C\e_1\|\nabla^2 v\|_{H^{m-2}} + C\|\nabla^{s+1}v\|_{L^2}^2.
\eq

\noindent $\diamond$ (Case (ii): $d=3$) Here, when $s+1 \ge 2$, we again have \eqref{Fv_est1}. When $3/2 < s+1 <2$, we have $\frac{d}{s+1} \in \lt(\frac{2}{s+1}, 2 \rt)$ and one gets
\[
\frac{d}{s+1} = \frac{2-\frac{d}{s+1}}{2-\frac{2}{s+1}} \cdot \frac{2}{s+1} + \frac{\frac{d-2}{s+1}}{2-\frac{2}{s+1}}\cdot 2.
\]
Thus, we use
\[
\|\nabla^{s+1} v\|_{L^2} \le \|\nabla^2 v\|_{L^2}^{\frac{s+1}{2}}\| v\|_{L^2}^{1-\frac{s+1}{2}}
\]
and Young's inequality to get
\[\begin{aligned}
\|\hat v\|_{L^1}^2 &\le C\|v\|_{L^2}^{2-\frac{d}{s+1}}\|\nabla^{s+1} v\|_{L^2}^{\frac{d}{s+1}}\\
&\le C\e_1^{2-\frac{2}{s+1}}\|\nabla^{s+1} v\|_{L^2}^{\frac{2}{s+1}} +C \|\nabla^{s+1} v\|_{L^2}^2\\
&\le C \e_1^{2-\frac{2}{s+1}} \|v\|_{L^2}^{\frac{2}{s+1} -1} \|\nabla^2 v\|_{L^2} + C\|\nabla^{s+1} v\|_{L^2}^2 \\
&\le C \e_1\|\nabla^2 v\|_{H^{m-2}} + C\|\nabla^{s+1} v\|_{L^2}^2.
\end{aligned}
\]

\noindent $\diamond$ (Case (iii) $d=2$) Here, we just use
\[
\|\hat v\|_{L^1}^2 \le C\|v\|_{L^2}\|\nabla^2 v\|_{L^2}
\]
to get
\[
\|\hat v\|_{L^1}^2 \le C\e_1 \|\nabla^2 v\|_{H^{m-2}}.
\]
Thus, we gather the estimates from all the cases to yield
\[
\sfJ_2 \le C\e_1 \int_0^T\|\nabla^2 v(t)\|_{H^{m-2}}\,dt + C\int_0^T\|\nabla^{s+1} v(t)\|_{L^2}^2\,dt.
\]

For $\sfJ_3$, we estimate
\begin{equation*}
	\begin{aligned}
		\sfJ_3 &= \int_0^T \left\| \int_0^t |\xi| e^{-|\xi|^2(t-\tau)} P(\xi)\rwhat{  \rho(u-v)}(\tau) \,d \tau \right\|_{L^1}  d t \\
		&\quad \leq \int_0^T \int_{B(0;1)} \frac {1}{|\xi|} |\rwhat{  \rho(u-v)}(t)| \,d \xi d t + \int_0^T \int_{\R^d \setminus B(0;1)} \frac {1}{|\xi|} |\rwhat{\rho(u-v)}(t)| \,d \xi d t\\
		&= :\sfJ_{31} + \sfJ_{32}.
	\end{aligned}
\end{equation*}
Here $\sfJ_{31}$ can be bounded by
\begin{equation*}
	\begin{aligned}
		\int_0^T \| \rwhat {\rho(u-v)}(t) \|_{L^{\infty}} \,d t \left\| \frac {1}{|\xi|} \right\|_{L^1(B(0;1))} \leq C \sup_{t \in [0,T]} \| \rho(t) \|_{L^2} \int_0^T \|(u-v)(t) \|_{L^2} \,d t.
	\end{aligned}
\end{equation*}
For given $m > \frac {d}{2} + 1$, we can find $p = p(m) \in [1,2)$ such that 
\[
1-\frac {1}{d} < \frac {1}{p} < \frac {1}{2} + \frac {m-2}{d}. 
\]
Then, Young's convolution inequality and \eqref{intp_1} imply
\begin{equation*}
	\begin{aligned}
		\int_0^T \int_{\R^d \setminus B(0;1)} \frac {1}{|\xi|} |\rwhat{\rho(u-v)}(t)| \,d \xi d t 
		& \leq \left\| \frac {1}{|\xi|} \right\|_{L^{p'}(\R^d \setminus B(0;1))} \int_0^T \| \hat  \rho(t) \|_{L^p} \| \rwhat {(u-v)}(t) \|_{L^1} \,d t \\
		& \leq C \sup_{t \in [0,T]} \| \rho(t) \|_{H^{m-2}} \int_0^T \| (u-v)(t) \|_{H^m}\,d t\\
		& \le C\e_1 \int_0^T \lt(\|(u-v)(t)\|_{L^2}+  \|\nabla^2 u(t)\|_{H^{m-2}} + \|\nabla^2 v(t)\|_{H^{m-2}}\rt) dt.
	\end{aligned}
\end{equation*}
Thus, 
\[
\sfJ_3 \leq C\e_1 \int_0^T \lt(\|(u-v)(t)\|_{L^2}+  \|\nabla^2 u(t)\|_{H^{m-2}} + \|\nabla^2 v(t)\|_{H^{m-2}}\rt) dt.
\]
Finally, we gather all the estimates for $\sfJ_i$'s to get the desired relation.
\end{proof}

Next, we estimate $\nabla^2 v$ in $L^1(0,T; H^{m-2})$.

\begin{lemma}\label{v_Hm_est}
For $T>0$, there exists $C>0$ independent of $T$ such that
\[
\int_0^T \|\nabla^2 v(t)\|_{H^{m-2}}\,dt \le C\lt( \sqrt{\mathfrak{X}_0(s)} + \|v_0\|_{L^1}\rt) +  C\e_1 \mathfrak{D}(s;T) + C\int_0^T \|\nabla^{s+1}v(t)\|_{L^2}^2\,dt.
\]
\end{lemma}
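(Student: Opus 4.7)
My plan is to follow the Duhamel-based strategy used in the proofs of Lemmas \ref{v_lp_est} and \ref{v_lip_est}. Since $\|\nabla^2 v\|_{H^{m-2}}^2$ is comparable to $\|\nabla^2 v\|_{L^2}^2+\|\nabla^m v\|_{L^2}^2$, it suffices to control $\int_0^T \||\xi|^k\hat v\|_{L^2}\,dt$ for $k\in\{2,m\}$. Starting from the mild form \eqref{v_trans}, I decompose this integral as $\sfK_1+\sfK_2+\sfK_3$, corresponding to the free heat evolution, the convective nonlinearity, and the drag coupling.

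For the heat term $\sfK_1=\int_0^T \||\xi|^k e^{-|\xi|^2 t}\hat v_0\|_{L^2}\,dt$, I split $L^2_\xi=L^2(|\xi|\le 1)+L^2(|\xi|\ge 1)$. On low frequencies I use $\|\hat v_0\|_{L^\infty}\le\|v_0\|_{L^1}$ and the uniform integrability of $\int_0^\infty \||\xi|^k e^{-|\xi|^2 t}\|_{L^2(|\xi|\le 1)}\,dt$ (decay $\sim t^{-k/2-d/4}$ with $k/2+d/4>1$ for $k\ge 2$, $d\ge 2$). On high frequencies I factor $|\xi|^k e^{-|\xi|^2 t}=|\xi|^{k-s}e^{-|\xi|^2 t/2}\cdot|\xi|^s e^{-|\xi|^2 t/2}$, so that on $\{|\xi|\ge 1\}$ the first factor is bounded by $\min\{Ct^{-(k-s)/2},\,Ce^{-t/4}\}$, integrable on $(0,\infty)$ because $k-s<2$ (as $k\le m$ and $s>m-2$). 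This yields $\sfK_1\le C(\|v_0\|_{L^1}+\sqrt{\mathfrak{X}_0(s)})$.

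For the nonlinear term $\sfK_2$, the divergence-free condition rewrites $\rwhat{(v\cdot\nabla)v}=(i\xi)\cdot\rwhat{v\otimes v}$, giving an extra factor of $|\xi|$, and Fubini lets me swap the $(t,\tau)$-integration order. On $\{|\xi|\ge 1\}$, the same factorization as above produces $\||\xi|^{k+1}e^{-|\xi|^2(t-\tau)}\rwhat{v\otimes v}\|_{L^2(|\xi|\ge 1)}\le CK_h(t-\tau)\|\nabla^{s+1}(v\otimes v)\|_{L^2}$ with $K_h\in L^1(0,\infty)$; Moser's inequality combined with $\|v\|_{L^\infty}\le\|\hat v\|_{L^1}$ then gives $\|\nabla^{s+1}(v\otimes v)\|_{L^2}\le C\|\hat v\|_{L^1}\|\nabla^{s+1}v\|_{L^2}$, and a Cauchy--Schwarz/Young application together with \eqref{Fv_est1} absorbs everything into $C\e_1\mathfrak{D}(s;T)+C\int_0^T\|\nabla^{s+1}v\|_{L^2}^2\,dt$. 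On $\{|\xi|\le 1\}$, I use $\|\rwhat{v\otimes v}\|_{L^2(|\xi|\le 1)}\le\|v\otimes v\|_{L^2}$ and $K_\ell(s):=\sup_{|\xi|\le 1}|\xi|^{k+1}e^{-|\xi|^2 s}\in L^1(0,\infty)$, and then the Gagliardo--Nirenberg chain $\|v\otimes v\|_{L^2}\le\|v\|_{L^2}\|v\|_{L^\infty}\le C\|v\|_{L^2}^{(d+4)/(d+2)}\|\nabla^2 v\|_{H^{m-2}}^{d/(d+2)}$ followed by the same H\"older--Young manipulation as in \eqref{est_vv} converts the remaining time integral into $C\e_1\|v\|_{L^{(d+4)/2}(0,T;L^2)}+C\e_1\int_0^T\|\nabla^2 v\|_{H^{m-2}}\,dt$, both of which are controlled by $C\e_1\mathfrak{D}(s;T)$.

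Finally, for the drag term $\sfK_3$, I follow the same frequency split with $\rwhat{\rho(u-v)}$ in place of $|\xi|\rwhat{v\otimes v}$, using product-type estimates of the form $\|\rho(u-v)\|_{L^2}\le C\|\rho\|_{H^s}(\|u-v\|_{L^2}+\|\nabla^2 u\|_{H^{m-2}}+\|\nabla^2 v\|_{H^{m-2}})$ analogous to those in the proof of Lemma \ref{v_Hs_est}, together with the smallness $\|\rho\|_{H^s}\le\e_1$, to close the estimate as $C\e_1\mathfrak{D}(s;T)$. Collecting everything yields the stated inequality. The hardest step, as in the preceding two lemmas, is the bookkeeping required to avoid any $T$-dependent or uncontrolled contribution; the truly delicate point is the low-frequency nonlinear interaction, where the $L^{(d+4)/2}(0,T;L^2)$-control on $v$ built into $\mathfrak{X}(s;T)$ is indispensable for making the naive $\int_0^T\|v\|_{L^2}^2\,d\tau$ tractable and for producing the right $\e_1$-smallness factor in front of $\mathfrak{D}(s;T)$.
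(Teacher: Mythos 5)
Your treatment of $\sfK_1$ and $\sfK_2$ is correct and essentially coincides with the paper's argument: the paper splits the linear term in time rather than in frequency and writes the Young-convolution step as an explicit near/far decomposition in $\tau$ instead of invoking an $L^1$-kernel lemma, but the mechanism (Duhamel, $\|v\|_{L^\infty}\le\|\hat v\|_{L^1}$ with \eqref{Fv_est1}, and the Gagliardo--Nirenberg/H\"older chain \eqref{est_vv} for the low-frequency quadratic term) is the same.

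The gap is in $\sfK_3$. Running "the same frequency split with $\rwhat{\rho(u-v)}$ in place of $|\xi|\,\rwhat{v\otimes v}$" together with the zero-derivative product bound $\|\rho(u-v)\|_{L^2}\le C\|\rho\|_{H^s}(\cdots)$ does not close, for two separate reasons. On high frequencies the kernel is $\sup_{|\xi|\ge1}|\xi|^{k}e^{-|\xi|^2\sigma}\sim\sigma^{-k/2}$ as $\sigma\to0^+$, which is not locally integrable for any $k\ge2$; unlike $\sfK_2$, the drag term has no divergence structure to donate a power of $|\xi|$, and $\rho$ lives only in $H^s$, so you cannot peel off $s+1$ derivatives from the source. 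One must instead transfer exactly $k-(m-s)$ derivatives onto the product, so that the remaining kernel is $\sigma^{-(m-s)/2}$ with $(m-s)/2<1$, and then control $\|\nabla^{k-(m-s)}(\rho(u-v))\|_{L^2}$ by a fractional Leibniz estimate with carefully chosen H\"older exponents; this uses $k-(m-s)\le s$ and is precisely where the hypothesis $s>m-2$ is consumed. It is the most delicate step of the paper's proof and is missing from yours. Separately, on low frequencies at $k=2$ the kernel $\sup_{|\xi|\le1}|\xi|^{2}e^{-|\xi|^2\sigma}\sim\min\{1,\sigma^{-1}\}$ is not integrable at $\sigma=\infty$, so pairing it with $\|\rho(u-v)\|_{L^2}$ leaves a factor $\log T$ after Fubini; here you must use the $L^1_x\to L^2$ heat smoothing $\|\rwhat{\rho(u-v)}\|_{L^\infty_\xi}\le\|\rho\|_{L^2}\|u-v\|_{L^2}$ to upgrade the kernel to $\sigma^{-1-d/4}$, exactly as you did for $\sfK_1$ but did not do here. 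With these two repairs the drag term is bounded by $C\e_1\int_0^T\|(u-v)(t)\|_{H^m}\,dt\le C\e_1\mathfrak{D}(s;T)$ and the lemma follows as you claim.
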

\begin{proof}
For $k \in [2,m]$, we use \eqref{v_trans} to get
\[
	\begin{aligned}
		\int_0^T \| |\xi|^k \hat v(t) \|_{L^2} \,d t &\leq \int_0^T \| |\xi|^k e^{-|\xi|^2t} \hat v_0 \|_{L^2} \,d t + \int_0^T \left\| \int_0^t |\xi|^k e^{-|\xi|^2(t-\tau)} P(\xi) \rwhat{  (v \cdot \nabla)v}(\tau) \,d \tau \right\|_{L^2}  d t \\
		&\quad + \int_0^T \left\| \int_0^t |\xi|^k e^{-|\xi|^2(t-\tau)} P(\xi)\rwhat{ \rho(u-v)}(\tau) \,d \tau \right\|_{L^2}  d t\\
		&=: \sfK_1 + \sfK_2 + \sfK_3.
	\end{aligned}
\]
For $\sfK_1$,  when $t \in (0,1)$,
\begin{equation*}
	\begin{aligned}
		\| |\xi|^k e^{-|\xi|^2t} \hat v_0 \|_{L^2} \leq Ct^{-\frac {m-s}{2}} \| v_0 \|_{H^{k-(m-s)}} \leq Ct^{-\frac {m-s}{2}} \| v_0 \|_{H^{s}},
	\end{aligned}
\end{equation*}
and for $t \in [1,\infty)$
\[
	\begin{aligned}
		\| |\xi|^k e^{-|\xi|^2t} \hat v_0 \|_{L^2} \leq \| \hat  v_0 \|_{L^{\infty}} \| |\xi|^k e^{-|\xi|^2t} \|_{L^2} \leq Ct^{-\frac {k}{2} - \frac {d}{4}} \| v_0 \|_{L^1}.
	\end{aligned}
\]
Here, note that $m-s \in [1,2)$. So this implies
\[
\sfK_1 \le C\lt(\|v_0\|_{L^1} + \|v_0\|_{H^s}\rt).
\]
For $\sfK_2$, we deal with the case $k = m$ first. First, we employ
\[\begin{aligned}
\|v\|_{L^\infty} &\le C\left\{\begin{array}{lcl}\|\nabla^{s+1}v\|_{L^2}^{\frac{d/2-1}{s}}\|v\|_{L^{\frac{2d}{d-2}}}^{1-\frac{d/2-1}{s}} & \mbox{if} & d\ge3 \\ \|\nabla^2 v\|_{L^2}^{\frac 12} \|v\|_{L^2}^{\frac 12}& \mbox{if} & d=2
\end{array} \right.\\
&\le C\|\nabla v\|_{H^s} + C\e_1^{1/2}\|\nabla^2 v\|_{L^2}^{1/2},
\end{aligned}\]
\[
|x^r e^{-ax}| \le \lt(\frac{r}{a}\rt)^r e^{-r}, \quad \forall x>0, 
\]
and Proposition \ref{lem_moser} (vi)  to get
\[
	\begin{aligned}
		\left\|  |\xi|^m e^{-|\xi|^2(t-\tau)} P(\xi) \rwhat{ (v \cdot \nabla)v}  (\tau)\right\|_{L^2} & \leq C  (t-\tau)^{-\frac {m-s}{2}} \| \nabla^{s+1} (v \otimes v)(\tau) \|_{L^2} \\
		& \leq C (t-\tau)^{-\frac {m-s}{2}} \| \nabla^{s+1} v (\tau)\|_{L^2} \| v (\tau)\|_{L^{\infty}}  \\
		& \leq  C(t-\tau)^{-\frac {m-s}{2}} \| \nabla^{s+1} v(\tau) \|_{L^2} \lt(\|\nabla v(\tau)\|_{H^s} + \e_1^{1/2}\|\nabla^2 v(\tau)\|_{L^2}^{1/2} \rt)\\
		& \le C (t-\tau)^{-\frac {m-s}{2}} \| \nabla v(\tau) \|_{H^s}^2 + C\e_1 (t-\tau)^{-\frac {m-s}{2}} \| \nabla^2 v (\tau) \|_{H^{m-2}}.
	\end{aligned}
\]
Moreover, one has
\[
	\begin{aligned}
		\left\| |\xi|^m e^{-|\xi|^2(t-\tau)} P(\xi) \rwhat { (v \cdot \nabla)v}(\tau) \right\|_{L^2}& \le C  (t-\tau)^{-\frac {m}{2}} \| \nabla\cdot(v\otimes v)(\tau) \|_{L^2} \\
		& \leq C (t-\tau)^{-\frac {m}{2}} \| \nabla v (\tau)\|_{L^{\infty}} \| v(\tau) \|_{L^2},
	\end{aligned}
\]
where $C$ in the above two inequalities is independent of $t$, $\tau$ and $T$.

Thus, assuming $T>2$ without loss of generality, we use Minkowski's integral inequality to have
\begin{equation}\label{Hk_v_est_2}
\begin{aligned}
		&\int_0^T \left\| \int_0^t |\xi|^m e^{-|\xi|^2(t-\tau)} P(\xi) \rwhat{  (v \cdot \nabla)v}(\tau) \,d \tau \right\|_{L^2} \,d t \\
		&\quad \le  C\e_1 \int_0^2 \|\nabla^2 v(t)\|_{H^{m-2}}\int_0^t (t-\tau)^{-\frac{m-s}{2}}\,d\tau dt + C\int_0^2 \|\nabla^{s+1} v(t)\|_{L^2}^2 \int_0^t (t-\tau)^{-\frac{m-s}{2}}\,d\tau dt \\
		&\qquad +C \int_2^T \int_0^{t-1} (t-\tau)^{-\frac {m}{2}} \| \nabla v (\tau)\|_{L^{\infty}} \| v(\tau) \|_{L^2} \,d \tau + C\e_1 \int_2^T \int_{t-1}^t (t-\tau)^{-\frac {m-s}{2}} \| \nabla^2 v(\tau) \|_{H^{m-2}}\,d \tau\\
		&\qquad + C\int_2^T\int_{t-1}^t (t-\tau)^{-\frac{m-s}{2}}\|\nabla^{s+1} v(\tau)\|_{L^2}^2\,d\tau\\
		&\quad \le C\e_1 \int_0^T \|\nabla^2 v(t)\|_{H^{m-2}}\,dt + C\int_0^T \|\nabla^{s+1} v(t)\|_{L^2}^2\,dt.
\end{aligned}
\end{equation}
On the other hand, for $k = 2$, we use \eqref{intp_1} to get
\[
\left\| |\xi|^2 e^{-|\xi|^2(t-\tau)} P(\xi) \rwhat{  (v \cdot \nabla)v}(\tau) \right\|_{L^2} \le C (t-\tau)^{-\frac {3}{2}} \| \rwhat {(v \otimes v)}(\tau) \|_{L^2}   \leq C(t-\tau)^{-\frac {3}{2}} \| v(\tau) \|_{L^2}^{\frac {d+4}{d+2}} \| \nabla^2 v(\tau) \|_{H^{m-2}}^{\frac {d}{d+2}}.
\]
Moreover, we may get
\[
\left\| |\xi|^2 e^{-|\xi|^2(t-\tau)} P(\xi) \rwhat { (v \cdot \nabla)v}(\tau) \right\|_{L^2} \le C (t-\tau)^{-\frac 12} \| \rwhat {(v\cdot \nabla v)}(\tau) \|_{L^2}  \leq C(t-\tau)^{-\frac 12} \|\nabla v(\tau)\|_{L^\infty} \| v(\tau) \|_{L^2}.
\]
Thus, by assuming $T>2$, we obtain
\[\begin{aligned}
&\int_0^T \left\| \int_0^t |\xi|^2 e^{-|\xi|^2(t-\tau)} P(\xi) \rwhat{  (v \cdot \nabla)v}(\tau) \,d \tau \right\|_{L^2} \,d t \\
&\quad \leq C\e_1 \int_0^T \| \nabla^2 v(t) \|_{H^{m-2}} \,d t + C\int_0^T \|\nabla^{s+1}v(t)\|_{L^2}^2\,dt+ C \int_0^T \| v(t) \|_{L^2}^{\frac {d+4}{d+2}} \| \nabla^2 v(t) \|_{H^{m-2}}^{\frac {d}{d+2}} \,d t \\
&\quad \leq C \e_1 \int_0^T \| \nabla^2 v(t) \|_{H^{m-2}} \,d t + C\int_0^T \|\nabla^{s+1}v(t)\|_{L^2}^2\,dt +C \e_1\| v \|_{L^{\frac{d+4}{2}}(0,T;L^2)},
\end{aligned}\]
due to \eqref{est_vv}. Thus, for any $k \in [2,m]$, we have
\[\begin{aligned}
\sfK_2&\le C \e_1 \int_0^T \| \nabla^2 v(t) \|_{H^{m-2}} \,d t + C\int_0^T \|\nabla^{s+1}v(t)\|_{L^2}^2\,dt +C \e_1\| v \|_{L^{\frac{d+4}{2}}(0,T;L^2)}.
\end{aligned}\]
 
\noindent For $\sfK_3$, note that
\[
	\begin{aligned}
		\left\| |\xi|^k e^{-|\xi|^2(t-\tau)} P(\xi)\rwhat{ \rho(u-v)} (\tau) \right\|_{L^2} &\leq C (t-\tau)^{-\frac {k}{2}} \| e^{-|\xi|^2 \frac {t-\tau}{2}} \|_{L^2} \| \rwhat{ \rho (u-v)}(\tau) \|_{L^{\infty}}  \\
		&\leq C (t-\tau)^{-(\frac {k}{2}+\frac {d}{4})} \| \rho(\tau) \|_{L^2} \| (u-v)(\tau) \|_{L^2}.
	\end{aligned}
\]
Next, for $k \in [2,m]$, choose $\sigma>0$ satisfying $k-\sigma = m-s \in [1,2)$. Then
\[\begin{aligned}
&\left\| |\xi|^k e^{-|\xi|^2(t-\tau)} P(\xi)\rwhat {\rho(u-v)} (\tau) \right\|_{L^2} \\
&\quad \leq C (t-\tau)^{-\frac {k-\sigma}{2}} \| \rwhat{ \nabla^\sigma (\rho(u-v))}(\tau) \|_{L^2}  \\
&\quad \leq C (t-\tau)^{-\frac {m-s}{2}} (\| \nabla^\sigma \rho(\tau) \|_{L^2} \| (u-v)(\tau) \|_{L^\infty} + \| \hat \rho(\tau) \|_{L^p} \|\rwhat{  \nabla^\sigma(u-v)}(\tau) \|_{L^q})\\
&\quad \le C (t-\tau)^{-\frac {m-s}{2}} \|\rho(\tau)\|_{H^s}\|(u-v)(\tau)\|_{H^m},
\end{aligned}\]
where 
\[
\frac {1}{p} \in \lt(\frac {1}{2}, \frac {1}{2} + \frac {s}{d}\rt) \quad \mbox{and} \quad \frac {1}{q} \in \lt(\frac {1}{2},\frac {1}{2} + \frac {m-s}{d}\rt)\quad \mbox{with} \quad\frac {1}{p} + \frac {1}{q} = \frac {3}{2}, 
\]
and we used
\[
	\begin{aligned}
		\| \hat  \rho \|_{L^p} \leq C\| \rho \|_{H^s} \quad \mbox{and} \quad \| \rwhat{ \nabla^\sigma(u-v)} \|_{L^q} \leq C\|\nabla^\sigma (u-v) \|_{H^{m-s}} \le C\|(u-v)\|_{H^m}.
	\end{aligned}
\]
Thus, as \eqref{Hk_v_est_2}, we obtain
\[\begin{aligned}
&\int_0^T \left\| \int_0^t |\xi|^k e^{-|\xi|^2(t-\tau)} P(\xi)\rwhat{  \rho(u-v)}(\tau) \,d \tau \right\|_{L^2} d t \cr
&\quad  \leq C \sup_{t \in [0,T]} \| \rho(t) \|_{H^s} \int_0^T \| (u-v)(t) \|_{H^{m}} \,d t\\
&\quad \le C\e_1 \int_0^T \|(u-v)(t)\|_{L^2}\,dt + C\e_1 \int_0^T \|\nabla^2 u(t)\|_{H^{m-2}}\,dt + C\e_1 \int_0^T \|\nabla^2 v(t)\|_{H^{m-2}}\,dt,
\end{aligned}\]
and this implies, for any $k\in[2,m]$,
\[
\sfK_3\le C\e_1 \int_0^T \|(u-v)(t)\|_{L^2}\,dt + C\e_1 \int_0^T \|\nabla^2 u(t)\|_{H^{m-2}}\,dt+ C\e_1 \int_0^T \|\nabla^2 v(t)\|_{H^{m-2}}\,dt.
\]
We then gather all the estimates for $\sfK_i$'s and use the smallness of $\e_1$ to have
\[\begin{aligned}
&\int_0^T \|\nabla^2 v(t)\|_{H^{m-2}}\,dt \cr
& \quad \le C\lt( \sqrt{\mathfrak{X}_0(s)} + \|v_0\|_{L^1}\rt) +  C\e_1 \int_0^T \|\nabla^2 u(t)\|_{H^{m-2}}\,dt + C\int_0^T \|\nabla^{s+1}v(t)\|_{L^2}^2\,dt \\
&\qquad + C\e_1 \|v\|_{L^p(0,T;L^2)} + C\e_1\int_0^T \|(u-v)(t)\|_{L^2}\,dt\cr
&\quad \leq C\lt( \sqrt{\mathfrak{X}_0(s)} + \|v_0\|_{L^1}\rt) +  C\e_1 \mathfrak{D}(s;T) + C\int_0^T \|\nabla^{s+1}v(t)\|_{L^2}^2\,dt   + C\e_1 \|v\|_{L^{\frac{d+4}{2}}(0,T;L^2)}.
\end{aligned}\]
Finally, we combine the above with Lemma \ref{v_lp_est} to conclude the desired result.
\end{proof}

\begin{lemma}\label{uv_est}
For $T>0$, there exists $C>0$ independent of $T$ such that
\[
\int_0^T \|(u-v)(t)\|_{L^2}\,dt \leq C\sqrt{\mathfrak{X}_0(s)}  +  C\e_1 \mathfrak{D}(s;T) .
\]
\end{lemma}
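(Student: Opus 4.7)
The plan is to represent $u-v$ through a Duhamel formula for the pressureless Euler equation in which every occurrence of $v$ on the right-hand side is replaced, via the Navier--Stokes equation, by quantities already controlled by $\mathfrak{D}(s;T)$, and then to bound the resulting $L^1(0,T;L^2)$ norm using Young's convolution inequality. Rewriting the pressureless Euler momentum equation as $\pa_t u + u = v - (u\cdot\nabla) u$, Duhamel's principle yields
\[
u(t) = e^{-t}u_0 + \int_0^t e^{-(t-\tau)} v(\tau)\,d\tau - \int_0^t e^{-(t-\tau)} (u\cdot\nabla)u(\tau)\,d\tau.
\]
Integrating by parts on the middle integral to extract $\pa_\tau v$ and then substituting the Navier--Stokes equation $\pa_\tau v = \Delta v - \nabla p - (v\cdot\nabla)v + \rho(u-v)$, I would arrive at
\[
u(t)-v(t) = e^{-t}(u_0-v_0) - \int_0^t e^{-(t-\tau)} \Bigl[\Delta v - \nabla p - (v\cdot\nabla)v + \rho(u-v) + (u\cdot\nabla)u \Bigr](\tau)\,d\tau.
\]

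Next I would take $\|\cdot\|_{L^2}$ and $\int_0^T \cdot\,dt$, and invoke Young's convolution inequality (with $\|e^{-\cdot}\|_{L^1(0,\infty)}=1$) to convert each convolution into an $L^1(0,T;L^2)$ norm of its integrand. The boundary term gives $\|u_0-v_0\|_{L^2}\le C\sqrt{\mathfrak{X}_0(s)}$, while $\int_0^T\|\Delta v\|_{L^2}\,dt\le \int_0^T\|\nabla^2 v\|_{H^{m-2}}\,dt$ is already a component of $\mathfrak{D}(s;T)$. The transport nonlinearities are handled by pairing a size-$\e_1$ factor in $L^2$ against an $L^\infty$ factor already present in $\mathfrak{D}(s;T)$: namely $\|(u\cdot\nabla)u\|_{L^2}\le \|u\|_{L^2}\|\nabla u\|_{L^\infty}$, and, using $\|\nabla v\|_{L^\infty}\le \||\xi|\hat v\|_{L^1_\xi}$, $\|(v\cdot\nabla)v\|_{L^2}\le \|v\|_{L^2}\||\xi|\hat v\|_{L^1_\xi}$, each contributing a term of order $C\e_1 \mathfrak{D}(s;T)$.

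The delicate pieces are $\nabla p$ and $\rho(u-v)$, and this is where the main work lies. For the pressure, I would apply the Leray projector $\mathbb{P}$ to the Navier--Stokes equation and subtract from the original to obtain the pointwise-in-time identity $\nabla p = (I-\mathbb{P})[\rho(u-v)-(v\cdot\nabla)v]$, so that $\|\nabla p\|_{L^2} \le C\|\rho(u-v)\|_{L^2}+C\|(v\cdot\nabla)v\|_{L^2}$ and its contribution reduces to the other nonlinear terms. The drag term is the real obstacle: the low Sobolev regularity of $\rho$ forbids an $L^\infty$ bound, so I would keep $\rho$ in $L^2$ and estimate $\|u-v\|_{L^\infty}$ by a Gagliardo--Nirenberg--Young interpolation
\[
\|u-v\|_{L^\infty} \le C \bigl( \|u-v\|_{L^2} + \|\nabla^m(u-v)\|_{L^2} \bigr) \le C \bigl( \|u-v\|_{L^2} + \|\nabla^2 u\|_{H^{m-2}} + \|\nabla^2 v\|_{H^{m-2}} \bigr),
\]
which is legitimate because $m>d/2+1$, with every ingredient on the right belonging to $\mathfrak{D}(s;T)$. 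Assembling the bounds and using $\sup_{t\in[0,T]}(\|\rho(t)\|_{L^2}+\|u(t)\|_{L^2}+\|v(t)\|_{L^2})\lesssim \e_1$ then delivers the estimate in the claimed form.
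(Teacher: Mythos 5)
Your argument is essentially the paper's: the authors write the same damped equation $\partial_t(u-v)+(u-v)=-(u\cdot\nabla)u-\mathbb{P}(v\cdot\nabla)v-\Delta v-\mathbb{P}\rho(u-v)$ and run an $L^2$ energy/Gr\"onwall estimate, which after integration in time is exactly your Duhamel-plus-Young's-convolution computation, and they bound each right-hand-side term identically ($\|u\|_{L^2}\|\nabla u\|_{L^\infty}$, $\|v\|_{L^2}\|\nabla v\|_{L^\infty}$, $\|\rho\|_{L^2}\|u-v\|_{H^m}$ via Sobolev embedding, and $\|\Delta v\|_{L^2}$), with the pressure eliminated by the Leray projector just as in your identity for $\nabla p$. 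One remark that applies equally to the paper's own display: the $\int_0^T\|\Delta v(t)\|_{L^2}\,dt$ contribution carries an $O(1)$ rather than $O(\e_1)$ constant, so the stated form $C\sqrt{\mathfrak{X}_0(s)}+C\e_1\mathfrak{D}(s;T)$ is really only recovered once this lemma is combined with Lemma \ref{v_Hm_est} in Corollary \ref{unif_T_est}.
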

\begin{proof}
We first have
\begin{equation*}
	\begin{aligned}
		\partial_t(u-v) + (u-v) = -(u \cdot \nabla)u - \mathbb{P}(v \cdot \nabla)v  - \Delta v- \mathbb{P}\rho(u-v).
	\end{aligned}
\end{equation*}
Thus, we have
\begin{equation*}
	\begin{aligned}
		\frac {d }{d  t} \| u-v \|_{L^2}+ \| u-v \|_{L^2} &\leq \| (u \cdot \nabla)u \|_{L^2} + \| (v \cdot \nabla)v \|_{L^2} + \| \rho(u-v) \|_{L^2} + \|\Delta v\|_{L^2} \\
		&\leq \| u \|_{L^2} \| \nabla u \|_{L^{\infty}} + \| v \|_{L^2} \| \nabla v \|_{L^{\infty}} + C\| \rho \|_{L^2} \| u-v \|_{H^m} + \|\Delta v\|_{L^2}.
	\end{aligned}
\end{equation*}
Integrating the both sides over $[0,T]$ gives
\[\begin{aligned}
		&\sup_{t \in [0,T]} \| (u-v)(t) \|_{L^2} + \int_0^T \| (u-v)(t) \|_{L^2} \,d  t \\
		&\quad \le  C\|u_0 -v_0\|_{L^2} +  \sup_{t \in [0,T]} \| u \|_{L^2} \int_0^T \| \nabla u(t) \|_{L^{\infty}} \,d  t + C \int_0^T \|\Delta v(t)\|_{L^2}\,dt\\
		&\qquad + \sup_{t \in [0,T]} \| v(t)\|_{L^2} \int_0^T \| \nabla v(t) \|_{L^{\infty}} \,d  t + C\sup_{t \in [0,T]} \| \rho(t) \|_{L^2} \int_0^T \| \nabla^2 (u-v)(t) \|_{H^{m-2}} \,d  t\\
		&\quad \le  C\|u_0 -v_0\|_{L^2} +  C\e_1 \int_0^T \| \nabla u(t) \|_{L^{\infty}} \,d  t  + C\e_1  \int_0^T \| \nabla v(t) \|_{L^{\infty}} \,d  t \\
		&\qquad + C\e_1 \int_0^T \| \nabla^2 u(t) \|_{H^{m-2}} \,d  t + C\int_0^T \|\nabla^2 v(t)\|_{H^{m-2}}\,dt.
\end{aligned}\]
This completes the proof.
\end{proof}

%
%
%
%
%

\subsection{Closing the a priori estimates}
Hence, we collect all the estimates from Lemmas \ref{rho_est_up}-\ref{v_lp_est} to yield the following uniform-in-$T$ estimate.

\begin{corollary}\label{unif_T_est}
For $T>0$, we can find a sufficiently small $\e_1>0$ satisfying
\[
\mathfrak{X}(s;T) \le C^*\lt( \mathfrak{X}_0(s)+ \|v_0\|_{L^1}^2 + \e_1 \lt(\sqrt{\mathfrak{X}_0(s)} + \|v_0\|_{L^1}\rt)\rt),
\]
where $C^*>0$ is independent of $T$.
\end{corollary}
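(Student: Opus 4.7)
The plan is to close the estimates by first bounding the dissipative quantity $\mathfrak{D}(s;T)$ in terms of the initial data alone, and then using this to control $\mathfrak{X}(s;T)$. The key structural observation is that every term in $\mathfrak{D}(s;T)$ admits an estimate in which any piece of $\mathfrak{D}(s;T)$ appearing on the right-hand side carries a prefactor $\e_1$, so by choosing $\e_1$ small enough one can absorb it and close the loop.

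First, I sum the estimates from Lemma \ref{u_lip_est} for $\int_0^T\|\nabla u\|_{L^\infty}\,dt$ (using the Sobolev embedding $\|\nabla u_0\|_{L^\infty}\lesssim \|u_0\|_{H^m}\lesssim \sqrt{\mathfrak{X}_0(s)}$), Lemma \ref{v_lip_est} for $\int_0^T\||\xi|\hat v\|_{L^1_\xi}\,dt$, Lemma \ref{v_Hm_est} for $\int_0^T\|\nabla^2 v\|_{H^{m-2}}\,dt$, Lemma \ref{uv_est} for $\int_0^T\|(u-v)\|_{L^2}\,dt$, Lemma \ref{v_lp_est} for $\|v\|_{L^{(d+4)/2}(0,T;L^2)}$, and the $\ell=1$ case of Lemma \ref{u_Hm_est} for $\int_0^T\|\nabla^2 u\|_{H^{m-2}}\,dt$. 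The quantity $\int_0^T\|\nabla^{s+1}v\|_{L^2}^2\,dt$ that appears in Lemmas \ref{v_lip_est} and \ref{v_Hm_est} is dominated by means of Lemma \ref{v_Hs_est}, namely $\int_0^T\|\nabla v\|_{H^s}^2\,dt \le C\mathfrak{X}_0(s) + C\e_1^2\mathfrak{D}(s;T)$. Collecting everything yields
$$\mathfrak{D}(s;T) \le C\bigl(\sqrt{\mathfrak{X}_0(s)} + \|v_0\|_{L^1}\bigr) + C\mathfrak{X}_0(s) + C\e_1\,\mathfrak{D}(s;T),$$
with $C$ independent of $T$ and $\e_1$. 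Choosing $\e_1$ small enough so that $C\e_1 \le 1/2$, the last term can be absorbed to obtain
$$\mathfrak{D}(s;T) \le C'\bigl(\sqrt{\mathfrak{X}_0(s)} + \|v_0\|_{L^1}\bigr) + C'\mathfrak{X}_0(s),$$
which is precisely Step II of the outline.

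Next, inserting this bound into Lemma \ref{u_Hm_est} (with $\ell=2$), Lemma \ref{v_Hs_est}, and the square of Lemma \ref{v_lp_est} (using $(a+b)^2 \le 2a^2+2b^2$) produces
$$\mathfrak{X}^*(s;T) \le C\bigl(\mathfrak{X}_0(s) + \|v_0\|_{L^1}^2\bigr) + C\e_1\bigl(\sqrt{\mathfrak{X}_0(s)} + \|v_0\|_{L^1}\bigr),$$
after further bounding the quadratic $\mathfrak{D}(s;T)^2$ term using $\mathfrak{X}_0(s) \le \e_1\sqrt{\mathfrak{X}_0(s)}$ in view of the standing assumption $\mathfrak{X}(s;T)<\e_1^2$. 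Finally, Lemma \ref{rho_est_up} gives
$$\sup_{0\le t\le T}\|\rho(t)\|_{H^s}^2 \le \|\rho_0\|_{H^s}^2\, e^{2C\mathfrak{D}(s;T)};$$
since the bound on $\mathfrak{D}(s;T)$ from the previous step is itself of order $\e_1$, the exponential is bounded by a universal constant, so $\sup_t\|\rho(t)\|_{H^s}^2 \le C\mathfrak{X}_0(s)$. Adding this to the bound on $\mathfrak{X}^*(s;T)$ delivers the stated estimate on $\mathfrak{X}(s;T)$.

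The main subtlety is keeping track of which terms on the right-hand side of each of Lemmas \ref{rho_est_up}--\ref{uv_est} actually carry an $\e_1$ factor (and hence can be absorbed) versus those that contribute only the benign terms $\sqrt{\mathfrak{X}_0(s)}+\|v_0\|_{L^1}$ or $\mathfrak{X}_0(s)$; without this bookkeeping the inequality for $\mathfrak{D}(s;T)$ does not take the required form $\mathfrak{D}\le A + C\e_1\mathfrak{D}$. Given the careful accounting already carried out in the preceding lemmas, the remainder of the argument is a straightforward assembly.
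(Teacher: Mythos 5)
Your proposal is correct and follows essentially the same route as the paper: first close the bound on $\mathfrak{D}(s;T)$ using Lemmas \ref{u_Hm_est} ($\ell=1$) and \ref{v_lp_est}--\ref{uv_est} together with Lemma \ref{v_Hs_est} to control $\int_0^T\|\nabla v\|_{H^s}^2\,dt$, absorb the $C\e_1\mathfrak{D}$ term, and then feed the result back into Lemmas \ref{rho_est_up}, \ref{u_Hm_est} ($\ell=2$), \ref{v_Hs_est}, and \ref{v_lp_est}. One minor imprecision: it is not literally true that every piece of $\mathfrak{D}$ on the right-hand sides carries an $\e_1$ prefactor (Lemma \ref{u_lip_est} and the $\ell=1$ case of Lemma \ref{u_Hm_est} contain $\int\|\nabla v\|_{L^\infty}$, $\int\|(u-v)\|_{L^2}$ and $\int\|\nabla^2 v\|_{H^{m-2}}$ with $O(1)$ coefficients), so one must substitute the already-established bounds for those quantities rather than simply summing — but this chaining is exactly what the paper does and the argument closes as you describe.
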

\begin{proof}
First, we combine Lemmas \ref{v_lp_est}-\ref{uv_est} to yield
\begin{align*}
\mathfrak{D}(s;T) &\le C\lt( \sqrt{\mathfrak{X}_0(s)} + \|v_0\|_{L^1}\rt) + C\e_1 \mathfrak{D}(s;T) + C\int_0^T \|\nabla v(t)\|_{H^s}^2\,dt,
\end{align*}
or
\[
\mathfrak{D}(s;T) \le C\lt( \sqrt{\mathfrak{X}_0(s)} + \|v_0\|_{L^1}\rt)  + C\int_0^T \|\nabla v(t)\|_{H^s}^2\,dt.
\]
We also deduce from Lemma \ref{v_Hs_est} to yield
\[
\int_0^T \|\nabla v(t)\|_{H^s}^2\,dt \le C\mathfrak{X}_0(s) + C\e_1^2 \mathfrak{D}(s;T).
\]
These two relations imply
\[
\mathfrak{D}(s;T) \le C\lt(\sqrt{\mathfrak{X}_0(s)} + \|v_0\|_{L^1} \rt) + C\mathfrak{X}_0(s),
\]
for sufficiently small $\e_1>0$. Hence, from Lemma \ref{rho_est_up}, we have
\[
\sup_{0 \le t \le T} \|\rho(t)\|_{H^s}^2 \le \|\rho_0\|_{H^s}^2 \exp\lt(\mathfrak{D}(s;T) \rt) \le C\mathfrak{X}_0(s).
\]
We also obtain from Lemmas \ref{u_Hm_est}-\ref{v_Hs_est} that 
\[\begin{aligned}
&\sup_{0 \le t \le T} \lt(\|u(t)\|_{H^m}^2 + \|v(t)\|_{H^s}^2 \rt) + \int_0^T \|\nabla v(t)\|_{H^s}^2\,dt \\
&\quad \le C\mathfrak{X}_0(s) + C\e_1 \mathfrak{D}(s;T)\\
&\quad \le C\mathfrak{X}_0(s) + C\e_1\lt( \sqrt{\mathfrak{X}_0(s)} + \|v_0\|_{L^1}\rt) + C\e_1  \int_0^T \|\nabla v(t)\|_{H^s}^2\,dt,
\end{aligned}\]
and this gives
\[
\sup_{0 \le t \le T} \lt(\|u(t)\|_{H^m}^2 + \|v(t)\|_{H^s}^2 \rt) \le  C\mathfrak{X}_0(s) + C\e_1\lt( \sqrt{\mathfrak{X}_0(s)} + \|v_0\|_{L^1}\rt).
\]
Finally, we already have
\[
\|v\|_{L^\frac{d+4}{2}(0,T;L^2)}^2 \le \mathfrak{D}(s;T)^2 \le C\lt( \mathfrak{X}_0(s) + \|v_0\|_{L^1}^2\rt),
\]
where we used the smallness of the initial data, and this completes the assertion.
\end{proof}

\subsection{Proof of Theorem \ref{main_thm1}: existence} Now we present the proof for Theorem \ref{main_thm1}. We choose a positive constant $\e_1\ll1$ sufficiently small so that it satisfies the required smallness condition in Lemmas \ref{u_Hm_est}--\ref{v_lp_est}. Then, assume that
\[
\sqrt{\mathfrak{X}_0(s)} + \|v_0\|_{L^1} \le\delta \e_1,
\]
where $\delta>0$ satisfies
\[
C^*(\delta^2 + \delta) <1,
\]
and $C^* > 0$ appeared in Corollary \ref{unif_T_est}. Then we set
\[
\mathcal{S} := \lt\{ T\ge 0 \ | \ \mathfrak{X}(s;T) <\e_1^2\rt\}. 
\]
By the local-in-time existence theorem in Theorem \ref{thm_local}, the set $\mathcal{S}$ is non-empty. Now, we argue by contradiction to show $\sup\mathcal{S} = \infty$. Assume that $T^*:= \sup\mathcal{S} <\infty$. Then we have
\[\begin{aligned}
\e_1^2 = \lim_{t \to T*-}\mathfrak{X}(s;t) &\le C^*\lt(\mathfrak{X}_0(s) + \|v_0\|_{L^1}^2 + \e_1 \lt(\sqrt{\mathfrak{X}_0(s)} + \|v_0\|_{L^1}\rt)\rt)\\
&\le C^* \lt( \lt(\sqrt{\mathfrak{X}_0(s)} + \|v_0\|_{L^1}\rt)^2 + \e_1 \lt(\sqrt{\mathfrak{X}_0(s)} + \|v_0\|_{L^1}\rt)\rt)\\
&\le C^* (\delta^2 + \delta)\e_1^2 <\e_1^2,
\end{aligned}\]
which leads to a contradiction. This implies $T^* = \infty$, and hence the classical solution obtained in Theorem \ref{thm_local} globally exists. Moreover, according to the arguments in Corollary \ref{unif_T_est}, we also have
\[
\| \nabla^2 u \|_{L^{1}(0,\infty;H^{m-2})} + \| \nabla^2 v \|_{L^{1}(0,\infty;H^{m-2})} + \| \nabla u \|_{L^{1}(0,\infty;L^{\infty})} + \| \nabla v \|_{L^{1}(0,\infty;L^{\infty})} + \| \nabla v \|_{L^2(0,\infty;H^{s})} <\infty.
\]
This completes the proof.

%
%
%
%
%

\subsection{Proof of Theorem \ref{main_thm1}: stability}\label{ssec_sta} In this subsection, we investigate the stability estimates for system \eqref{A-1}. It is worth noticing that this stability estimate can be used to show the local existence and uniqueness of solutions to \eqref{A-1} based on the Cauchy sequence construction argument. 

Note that we deal with solutions $\rho$ of rather weak regularity, precisely, $\rho(t) \in H^s$ where $s$ can be $s = \frac d2 - 1 + \epsilon$ for some  $\epsilon \ll 1$. Thus, in particular, in the two-dimensional case, $\rho(t) \in H^\e$ for any $\epsilon > 0$. Thus, the stability estimate for $\rho$ in $L^2$ is not expected to be applicable when $s$ is close to $\frac d2 -1$. For that reason, we employ the negative Sobolev space for the stability estimate for $\rho$. To be more specific, for $\alpha \in (0,1/2)$ satisfying $\frac d2 -2\alpha <s$, we consider the $\dot{H}^{-\alpha}$-norm for the stability estimate for $\rho$. We notice that if $\rho \in L^1 \cap H^s$, then $\rho \in \dot{H}^{-\alpha}$. Indeed, we find
\[\begin{aligned}
\|\rho\|_{\dot{H}^{-\alpha}}^2 &= \intr \rho K * \rho\,dx\\
&\le C \intr \rho(x) \lt( \lt(\int_{\{ |x-y| \le 1\}} + \int_{\{ |x-y|>1\}}\rt) \frac{1}{|x-y|^{d-2\alpha}}\rho(y)\,dy \rt) dx\\
&\le C \intr \rho(x) \lt(\|\rho\|_{L^q} + \|\rho\|_{L^1} \rt) dx\\
&\le C\|\rho\|_{L^1}\lt(\|\rho\|_{H^s} + \|\rho\|_{L^1} \rt),
\end{aligned}\]
where 
\[
\frac 1q < \frac12 - \frac{d/2-2\alpha}{d}
\] 
and $K$ is the Riesz potential, i.e., $K = c_{d,\alpha}|x|^{-(d-2\alpha)}$ for some $c_{d,\alpha} > 0$ depending only on $\alpha$ and $d$, which can be regarded as $\Lambda^{-2\alpha}$. As expected, if $s$ is large enough, then we can consider the $L^2$-norm for the stability estimate for $\rho$, see Remark \ref{rmk_l2_sta} below.
 
\begin{lemma}\label{stab_1}
For each $i=1,2$, if $(\rho_i, u_i, v_i)$  is a solution to \eqref{A-1} corresponding to the initial data $(\rho_i^0, u_i^0, v_i^0)$, then we have
\[\begin{aligned}
&\|(\rho_1 - \rho_2)(t)\|_{\dot{H}^{-\alpha}}^2 + \|(u_1 -u_2)(t)\|_{H^1}^2 + \|(v_1 -v_2)(t)\|_{L^2}^2 \\
&\quad\le \lt(\|\rho_1^0 - \rho_2^0\|_{\dot{H}^{-\alpha}}^2 + \|u_1^0 -u_2^0\|_{H^1}^2 + \|v_1^0 -v_2^0\|_{L^2}^2 \rt)  \exp\lt(C(1+t) \rt),
\end{aligned}\]
where $\alpha \in (0,1)$ satisfies $\frac d2 -\alpha <s$ and $C>0$ is a positive constant.
\end{lemma}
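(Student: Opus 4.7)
Set $\bar\rho=\rho_1-\rho_2$, $\bar u=u_1-u_2$, $\bar v=v_1-v_2$. Since $\rho_i>0$, dividing the momentum equation in \eqref{A-1} by $\rho_i$ gives $\partial_t u_i+(u_i\cdot\nabla)u_i=-(u_i-v_i)$, and subtracting the two systems produces
\begin{align*}
&\partial_t\bar\rho+\nabla\cdot(u_1\bar\rho)+\nabla\cdot(\rho_2\bar u)=0,\\
&\partial_t\bar u+(u_1\cdot\nabla)\bar u+(\bar u\cdot\nabla)u_2+\bar u=\bar v,\\
&\partial_t\bar v+(v_1\cdot\nabla)\bar v+(\bar v\cdot\nabla)v_2+\nabla(p_1-p_2)-\Delta\bar v=\bar\rho(u_1-v_1)+\rho_2(\bar u-\bar v).
\end{align*}
The key structural features are the linear damping $+\bar u$ that the drag generates in the second equation and the parabolic dissipation $-\Delta\bar v$ in the third. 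My plan is to run energy estimates for $(\bar\rho,\bar u,\bar v)$ in $\dot H^{-\alpha}\times H^1\times L^2$, using these two mechanisms as absorbing terms, and then close by Gr\"onwall.

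For $\bar u$ I test against $\bar u-\Delta\bar u$, so that the damping yields $\|\bar u\|_{H^1}^2$ on the left. The convective contributions are bounded through $\|\nabla u_i\|_{L^\infty}\ls\|u_i\|_{H^m}$ together with $(\bar u\cdot\nabla)\nabla u_2\cdot\nabla\bar u\le\|\bar u\|_{L^{2d/(d-2)}}\|\nabla^2 u_2\|_{L^d}\|\nabla\bar u\|_{L^2}$, which is admissible because $H^1\hookrightarrow L^{2d/(d-2)}$ and $H^{m-2}\hookrightarrow L^d$ under $m>d/2+1$; the $\bar v$-forcing produces $\|\bar v\|_{L^2}\|\bar u\|_{L^2}+\|\nabla\bar v\|_{L^2}\|\nabla\bar u\|_{L^2}$, the second factor carried with a small Young constant to be absorbed later. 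For $\bar v$ tested against $\bar v$ I obtain the dissipation $\|\nabla\bar v\|_{L^2}^2$; the transport term $(\bar v\cdot\nabla)v_2$ is handled by pairing $\nabla v_2\in L^{2d/(d-2s+2)}$ (from $H^{s-1}$) with $\bar v$ interpolated between $L^2$ and $L^{2d/(d-2)}$, while the drag $\rho_2(\bar u-\bar v)$ uses $\rho_2\in L^d$ (from $H^s\hookrightarrow L^d$, available since $s>d/2-1$), $\bar u\in H^1\hookrightarrow L^{2d/(d-2)}$, and the Gagliardo--Nirenberg bound $\|\bar v\|_{L^{2d/(d-1)}}^2\le C\|\bar v\|_{L^2}\|\nabla\bar v\|_{L^2}$; all $\|\nabla\bar v\|_{L^2}^2$ by-products are absorbed via Young's inequality.

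For $\bar\rho$ I apply $\Lambda^{-\alpha}$ to the continuity equation and test against $\psi:=\Lambda^{-\alpha}\bar\rho$. In the transport-type term $\int\Lambda^{-2\alpha}\nabla\cdot(u_1\bar\rho)\bar\rho\,dx$ I rewrite $\Lambda^\alpha(u_1\Lambda^{-\alpha}\nabla\psi)=u_1\cdot\nabla\psi+[\Lambda^\alpha,u_1]\Lambda^{-\alpha}\nabla\psi$, so that the principal part integrates by parts to $-\tfrac12\int(\nabla\cdot u_1)\psi^2\,dx\ls\|\nabla u_1\|_{L^\infty}\|\bar\rho\|_{\dot H^{-\alpha}}^2$ and the remainder is handled by the fractional commutator bound $\|[\Lambda^\alpha,f]g\|_{L^2}\le C\|\nabla f\|_{L^\infty}\|\Lambda^{\alpha-1}g\|_{L^2}$, again yielding a multiple of $\|\nabla u_1\|_{L^\infty}\|\bar\rho\|_{\dot H^{-\alpha}}^2$. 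The coupling term $\int\Lambda^{-\alpha}\nabla\cdot(\rho_2\bar u)\psi\,dx$ is dualized to $\ls\|\rho_2\bar u\|_{\dot H^{1-\alpha}}\|\bar\rho\|_{\dot H^{-\alpha}}$ and closed by the product estimate $\|\rho_2\bar u\|_{\dot H^{1-\alpha}}\ls\|\rho_2\|_{H^s}\|\bar u\|_{H^1}$, which holds precisely thanks to the hypothesis $s>d/2-\alpha$.

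The one genuinely delicate step is the cross term $\int\bar\rho(u_1-v_1)\cdot\bar v\,dx$ in the $\bar v$-estimate, since it pairs the negative-regularity object $\bar\rho\in\dot H^{-\alpha}$ against the product of the rough factor $v_1\in H^s$ (with $s<d/2$) and the merely $L^2$-function $\bar v$. I intend to use duality and the fractional Leibniz rule
\[
\int\bar\rho(u_1-v_1)\cdot\bar v\,dx\ls\|\bar\rho\|_{\dot H^{-\alpha}}\Bigl(\|\Lambda^\alpha(u_1-v_1)\|_{L^{p_1}}\|\bar v\|_{L^{q_1}}+\|u_1-v_1\|_{L^{p_2}}\|\Lambda^\alpha\bar v\|_{L^{q_2}}\Bigr),
\]
choosing the H\"older indices so that $H^{s-\alpha}\hookrightarrow L^{p_1}$ and $H^s\hookrightarrow L^{p_2}$ (both available because $s>d/2-\alpha$) and so that $\bar v$ and $\Lambda^\alpha\bar v$ can be interpolated between $L^2$ and $H^1$ using the dissipation $\|\nabla\bar v\|_{L^2}$. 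The output is dominated by $C\|\bar\rho\|_{\dot H^{-\alpha}}(\|\bar v\|_{L^2}+\|\nabla\bar v\|_{L^2})$, and Young's inequality then gives $C\|\bar\rho\|_{\dot H^{-\alpha}}^2+C\|\bar v\|_{L^2}^2+\epsilon\|\nabla\bar v\|_{L^2}^2$ with $\epsilon$ chosen small enough that the last term is absorbed by the viscous dissipation. Summing the three energy inequalities and setting $\mathcal{E}(t):=\|\bar\rho\|_{\dot H^{-\alpha}}^2+\|\bar u\|_{H^1}^2+\|\bar v\|_{L^2}^2$ yields $\tfrac{d}{dt}\mathcal{E}(t)\le C\mathcal{E}(t)$, and Gr\"onwall delivers the claimed bound.
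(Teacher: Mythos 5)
Your proposal is correct and follows essentially the same architecture as the paper's proof: energy estimates for $(\rho_1-\rho_2,u_1-u_2,v_1-v_2)$ in $\dot{H}^{-\alpha}\times H^1\times L^2$, with the drag-induced damping and the viscous dissipation used as absorbing terms, the delicate cross term $\int(\rho_1-\rho_2)(u_1-v_1)\cdot(v_1-v_2)\,dx$ handled by the duality pairing $\|\rho_1-\rho_2\|_{\dot H^{-\alpha}}\|(u_1-v_1)(v_1-v_2)\|_{\dot H^{\alpha}}$ together with a fractional Leibniz rule and absorption of $\|\nabla(v_1-v_2)\|_{L^2}^2$, and Gr\"onwall to close. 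The one component where you genuinely diverge is the transport part of the $\dot H^{-\alpha}$ estimate for the density difference: the paper outsources this to the modulated-energy theorem of the cited preprint (written in terms of the Riesz potential $K*(\rho_1-\rho_2)$, with the flux difference grouped as $u_2(\rho_1-\rho_2)+\rho_1(u_1-u_2)$), whereas you prove it by hand via the commutator identity whose principal part integrates by parts to $-\tfrac12\int(\nabla\cdot u_1)|\Lambda^{-\alpha}(\rho_1-\rho_2)|^2\,dx$ and whose remainder is controlled by $\|[\Lambda^{\alpha},f]g\|_{L^2}\lesssim\|\nabla f\|_{L^\infty}\|\Lambda^{\alpha-1}g\|_{L^2}$, valid for $\alpha\in(0,1)$; this is a legitimate, more self-contained alternative, and your grouping $u_1(\rho_1-\rho_2)+\rho_2(u_1-u_2)$ with the product estimate $\|\rho_2(u_1-u_2)\|_{\dot H^{1-\alpha}}\lesssim\|\rho_2\|_{H^s}\|u_1-u_2\|_{H^1}$ uses the hypothesis $d/2-\alpha\le s$ in exactly the same way the paper does. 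Two small caveats: the Sobolev embeddings $H^1\hookrightarrow L^{2d/(d-2)}$ and $H^{m-2}\hookrightarrow L^d$ you invoke must be replaced by finite-exponent versions when $d=2$; and the final Gr\"onwall coefficient is not a pure constant but a function involving $\|\nabla u_1\|_{L^\infty}$, $\|\nabla v_1\|_{L^\infty}$ and $\|\nabla^{s+1}v_1\|_{L^2}^2$, which is only time-integrable (with integral $\le C(1+t)$ by Theorem \ref{main_thm1}) rather than bounded --- this is precisely why the exponent in the statement is $C(1+t)$, and your write-up should make that dependence explicit.
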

\begin{proof}
We use \cite[Theorem 1.1]{CJpre2} to get
\[\begin{aligned}
\frac12\frac{d}{dt}\|\rho_1 - \rho_2\|_{\dot{H}^{-\alpha}}^2 &= \frac12\frac{d}{dt}\intr (\rho_1 - \rho_2) K * (\rho_1 - \rho_2)\,dx\\
&\le \intr \rho_1(u_1-u_2) \cdot \nabla K * (\rho_1 -\rho_2) \,dx + C\|\nabla u_2\|_{L^\infty} \intr (\rho_1-\rho_2) K * (\rho_1 -\rho_2)\,dx\\
&\le \|\Lambda^{1-\alpha} (\rho_1(u_1-u_2))\|_{L^2} \|\Lambda^{-1+\alpha}\nabla K * (\rho_1 - \rho_2)\|_{L^2}+ C\|\rho_1-\rho_2\|_{\dot{H}^{-\alpha}}^2\\
&\le C \lt(\|\Lambda^{1-\alpha}\rho_1\|_{L^{p_1}}\|u_1-u_2\|_{L^{q_1}} + \|\rho_1\|_{L^{p_2}}\|\Lambda^{1-\alpha}(u_1-u_2)\|_{L^{q_2}}\rt)\|\rho_1 - \rho_2\|_{\dot{H}^{-\alpha}}\\
&\quad + C\|\rho_1-\rho_2\|_{\dot{H}^{-\alpha}}^2\\
&\le C\lt(\|u_1- u_2\|_{H^1}^2 + \|\rho_1 -\rho_2\|_{\dot{H}^{-\alpha}}^2 \rt),
\end{aligned}\]
where $p_i$ and $q_i$ are given by
\bq\label{pq_choice}
\frac{1}{p_i}  = \frac12 - \frac{\theta_i}{d}, \quad \frac{1}{q_i}  = \frac12 - \frac{d/2-\theta_i}{d}, \quad i=1,2,
\eq
and $\theta_i$, $i=1,2$ satisfy
\[
0< (1-\alpha) + \theta_1 \le s, \quad \frac d2-1 < \theta_1 < \frac d2 \quad \mbox{so that} \quad d/2-\theta_1 \in (0,1)
\]
and
\[
0< \theta_2 \le s, \quad \frac d2-1 < 1-\alpha + \theta_2 < \frac d2 \quad \mbox{so that} \quad d/2-\theta_2 \in (0,1).
\]
Here, straightforward computation yields
\[
\frac{d}{dt}\|u_1 - u_2\|_{H^1}^2 \le C\lt(\|u_1 - u_2\|_{H^1}^2 + \|v_1 - v_2\|_{L^2}^2\rt) + \frac 14\|\nabla (v_1 - v_2)\|_{L^2}^2.
\]
Next, we estimate
\begin{align*}
\frac12&\frac{d}{dt}\|v_1-v_2\|_{L^2}^2 + \|\nabla (v_1 -v_2)\|_{L^2}^2 \\
&= \intr (v_1 -v_2) \cdot ((v_1 - v_2) \cdot \nabla v_1)\,dx + \intr (v_1 -v_2)\cdot (\rho_1 -\rho_2)(u_1 -v_1)\,dx\\
&\quad + \intr (v_1 -v_2)\cdot \rho_2 (u_1-u_2 -(v_1- v_2))\,dx\\
&\le C\|\nabla v_1\|_{L^\infty}\|v_1 - v_2\|_{L^2}^2 + \|\rho_1 -\rho_2\|_{\dot{H}^{-\alpha}}\|(v_1-v_2)\cdot(u_1-v_1)\|_{\dot{H}^\alpha}  + \|v_1 -v_2\|_{L^2}\|\rho_2\|_{L^{p_1}}\|u_1 -u_2\|_{L^{q_1}}\\
&\le C\|\nabla v_1\|_{L^\infty}\|v_1 -v_2\|_{L^2}^2 +C(1+ \|v_1\|_{L^\infty}) \|\rho_1 - \rho_2\|_{\dot{H}^{-\alpha}}\|\nabla (v_1 -v_2)\|_{L^2}\\
&\quad + C(1+\|v_1\|_{L^\infty}+\|\nabla v_1\|_{L^\infty})\|\rho_1 - \rho_2\|_{\dot{H}^{-\alpha}}\|(v_1 -v_2)\|_{L^2} + C\|v_1-v_2\|_{L^2}\|u_1 - u_2\|_{H^1}\\
&\le C(1+\|\nabla v_1\|_{L^\infty})\lt( \|v_1 -v_2\|_{L^2}^2 + \|\rho_1 - \rho_2\|_{\dot{H}^{-1}}^2 +  \|u_1 - u_2\|_{H^1}^2\rt) \\
&\quad +C(1+ \|v_1\|_{H^{s+1}}) \|\rho_1 - \rho_2\|_{\dot{H}^{-\alpha}}\| v_1 -v_2\|_{H^1}\\
&\le  C(1+\|\nabla v_1\|_{L^\infty}+ \|\nabla^{s+1} v_1\|_{L^2}^2)\lt( \|v_1 -v_2\|_{L^2}^2 + \|\rho_1 - \rho_2\|_{\dot{H}^{-\alpha}}^2 +  \|u_1 - u_2\|_{H^1}^2\rt)  +\frac14\|\nabla (v_1 -v_2)\|_{L^2}^2,
\end{align*}
where we used $s+1 > \frac d2$ and $p_1$ and $q_1$ are from \eqref{pq_choice}.

Thus, we combine all the estimates to yield
\[\begin{aligned}
\frac{d}{dt}&\lt(\|\rho_1 - \rho_2\|_{\dot{H}^{-\alpha}}^2+ \|v_1 -v_2\|_{L^2}^2 +  \|u_1 - u_2\|_{H^1}^2\rt) \\
&\le C\lt(1+ \|\nabla v_1\|_{L^\infty} + \|\nabla^{s+1} v_1\|_{L^2}^2 \rt)\lt(\|\rho_1 - \rho_2\|_{\dot{H}^{-\alpha}}^2+ \|v_1 -v_2\|_{L^2}^2 +  \|u_1 - u_2\|_{H^1}^2\rt) ,
\end{aligned}\]
and use the Gr\"onwall's lemma to conclude the proof.
\end{proof}

\begin{remark}\label{rmk_l2_sta}
When $s> \frac d2$, we may consider $L^2$-stability estimate of $\rho$ since
\begin{align*}
\frac12\frac{d}{dt}\|\rho_1 -\rho_2\|_{L^2}^2 &= -\intr (\rho_1 - \rho_2) \lt( \nabla(\rho_1 -\rho_2) \cdot u_1 + (\rho_1 -\rho_2)\nabla \cdot u_1 \rt)\,dx\\
&\quad -\intr (\rho_1- \rho_2) (\nabla \rho_2 \cdot (u_1 -u_2) + \rho_2 \nabla \cdot (u_1 -u_2))\,dx\\
&\le C\|\nabla u_1\|_{L^\infty}\|\rho_1 - \rho_2\|_{L^2}^2  + C\|\rho_1 - \rho_2\|_{L^2}\|\nabla \rho_2\|_{L^p}\|u_1 -u_2\|_{L^q} \\
&\quad + C\|\rho_1 -\rho_2\|_{L^2}\|\rho_2\|_{L^\infty}\|\nabla (u_1 - u_2)\|_{L^2}\\
&\le C(\|\rho_1 - \rho_2\|_{L^2}^2 + \|u_1- u_2\|_{H^1}^2),
\end{align*}
where $p$ and $q$ are given by
\[
\frac1p  = \frac12 - \frac{\theta-1}{d}, \quad \frac1q = \frac{\theta-1}{d} = \frac12 - \lt(\frac12- \frac{\theta-1}{d}\rt) = \frac12 - \frac{d-2(\theta-1)}{2d},
\]
and $\theta$ satisfies
\[
0<\theta \le s, \quad \frac d2 < \theta < \frac d2+1, \quad \mbox{so that} \quad \frac{d-2(\theta-1)}{2} \in (0,1).
\]
In this case, we may estimate the $L^2$-stability of $v$ as
\begin{align*}
&\frac12\frac{d}{dt}\|v_1-v_2\|_{L^2}^2 + \|\nabla (v_1 -v_2)\|_{L^2}^2 \cr
&\quad \le C\|\nabla v_1\|_{L^\infty}\|v_1 - v_2\|_{L^2}^2 + C(1+\|v_1\|_{L^\infty}) \|\rho_1 -\rho_2\|_{L^2}\|(v_1-v_2)\|_{L^2}  + \|v_1 -v_2\|_{L^2}\|\rho_2\|_{L^\infty}\|u_1 -u_2\|_{L^2}\\
&\quad \le  C(1+\|\nabla v_1\|_{L^\infty}+ \|\nabla^{s+1} v_1\|_{L^2}^2)\lt( \|v_1 -v_2\|_{L^2}^2 + \|\rho_1 - \rho_2\|_{L^2}^2 +  \|u_1 - u_2\|_{L^2}^2\rt),
\end{align*}
and again use the Gr\"onwall's lemma to get the desired $L^2$-stability:
\[\begin{aligned}
&\|(\rho_1 - \rho_2)(t)\|_{L^2}^2 + \|(u_1 -u_2)(t)\|_{H^1}^2 + \|(v_1 -v_2)(t)\|_{L^2}^2  \cr
&\quad  \le \lt(\|\rho_1^0 - \rho_2^0\|_{L^2}^2 + \|u_1^0 -u_2^0\|_{H^1}^2 + \|v_1^0 -v_2^0\|_{L^2}^2 \rt)  \exp\lt(C(1+t) \rt).
\end{aligned}\]
\end{remark}

%
%
%
%
%
%

\section{Large time behavior estimates}\label{sec:4}
\setcounter{equation}{0}
In this section, we investigate the large time behavior of solutions, constructed in Theorem \ref{main_thm1}, to the system \eqref{A-1}. For this, we set
\[\mm_f^k (t) := \sup_{\tau \in [0,t]} \tau^{\frac{d}{4} + \frac k2}  \| \nabla^k  f(\tau)\|_{L^2}, \quad \mm_f(t) := \sup_{k \in [0,\, \min\{m, \frac d2 + 2\}]}\mm_f^k(t), \quad f = u \  \mbox{ or } \ v,\]
\[
\md^\theta(t) := \sup_{\tau \in [0,t] } \tau^{\frac{d}{4} + \frac{\theta}{2}+1} \|\nabla^\theta(u-v)(\tau)\|_{L^2}, \quad \mbox{and} \quad \md(t) :=\sup_{\theta \in [0, \,\min\{m-2, \frac d2\} ]}\md^\theta(t).
\]
In addition, in the case $m \ge \frac d2+2$, we set 
\[
\tilde{\mm}_v (t) := \sup_{\tau \in [0,t]} \tau^{\frac{d}{2} +1}  \| |\xi|^2 \hat v (\tau)\|_{L^1}
\quad \mbox{and} \quad 
\tilde{\md}(t) := \sup_{\tau \in [0,t] } \tau^{\frac{d}{2} + 1} \|\rwhat{(u-v)}(\tau)\|_{L^1}.
\]
Here, we assume $\|\Delta u_0\|_{L^\infty}<\infty$ when $m=\frac d2 +2$.

Then the following is direct.
\begin{lemma}\label{u_vel}
For $t>0$ and $k \in [0,\,\min\{m, \frac d2 + 2\}]$, there exists $C>0$ independent of $t$ such that 
\[
 \mm_u^k (t) \le e^{-t}\|\nabla^k u_0\|_{L^2}+ C\mm_v^k (t).
\]
\end{lemma}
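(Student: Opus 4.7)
My plan is to exploit the damping produced by the drag force on $u$. Using the continuity equation to eliminate $\rho$ from the momentum equation in \eqref{A-1}, I would reduce $u$ to the transport equation with linear damping
\begin{equation*}
\partial_t u + (u\cdot\nabla)u + u = v.
\end{equation*}
Applying $\nabla^k$, pairing with $\nabla^k u$ in $L^2$, and controlling the convective term by the Moser-type product estimate from Lemma \ref{lem_moser}(i) (together with a direct integration by parts when $k=0$, which recovers $\tfrac12\int(\nabla\cdot u)|u|^2\,dx$), I would arrive at
\begin{equation*}
\frac{d}{dt}\|\nabla^k u\|_{L^2} + \|\nabla^k u\|_{L^2} \le C\|\nabla u\|_{L^\infty}\|\nabla^k u\|_{L^2} + \|\nabla^k v\|_{L^2}.
\end{equation*}
Grönwall's inequality, together with the bound $\|\nabla u\|_{L^{1}(0,\infty;L^{\infty})} <\infty$ from Theorem \ref{main_thm1}, would then produce the Duhamel representation
\begin{equation*}
\|\nabla^k u(\tau)\|_{L^2} \le Ce^{-\tau}\|\nabla^k u_0\|_{L^2} + C\int_0^\tau e^{-(\tau-s)}\|\nabla^k v(s)\|_{L^2}\,ds.
\end{equation*}

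The remaining task is to multiply through by $\tau^{d/4+k/2}$ and take the supremum on $[0,t]$. Inside the convolution I would use the elementary splitting $\tau^{d/4+k/2} \le C\bigl((\tau-s)^{d/4+k/2}+s^{d/4+k/2}\bigr)$. The $s^{d/4+k/2}$-piece combines with $\|\nabla^k v(s)\|_{L^2}$ to give $\mm_v^k(t)$ by definition, and the leftover exponential integral contributes at most $1$, producing the desired $C\,\mm_v^k(t)$. The $(\tau-s)^{d/4+k/2}$-piece is first absorbed into the exponential via $(\tau-s)^{d/4+k/2}e^{-(\tau-s)}\le Ce^{-(\tau-s)/2}$; then I split the $s$-integral at $s=1$. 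On $[1,\tau]$ the definition of $\mm_v^k(t)$ gives $\|\nabla^k v(s)\|_{L^2}\le s^{-d/4-k/2}\mm_v^k(t)\le \mm_v^k(t)$, contributing $O(\mm_v^k(t))$. On $[0,1]$ I invoke the uniform bound $\|v(s)\|_{H^m}\le C$ (also from Theorem \ref{main_thm1}), and the resulting contribution carries an overall factor $\tau^{d/4+k/2}e^{-\tau/2}$ which is uniformly bounded and decays exponentially, so it can be absorbed into the $e^{-t}\|\nabla^k u_0\|_{L^2}$ term after a harmless adjustment of the constant.

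The main technical subtlety is that the weight exponent $d/4+k/2$ can be as large as $(d+4)/4$ in the range $k\in[0,\min\{m,d/2+2\}]$, so the naive substitution $\|\nabla^k v(s)\|_{L^2}\le \mm_v^k(t)s^{-d/4-k/2}$ is not integrable at $s=0$. The splitting $\tau^{\alpha}\le C((\tau-s)^\alpha+s^\alpha)$ isolates the singular region near $s=0$ into a single piece, which is then handled by the uniform $H^m$-bound on $v$ coming from the global well-posedness result; outside this short-time window the weight $s^{d/4+k/2}$ converts $\|\nabla^k v(s)\|_{L^2}$ directly into $\mm_v^k(t)$ with no further loss.
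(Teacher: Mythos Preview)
Your overall strategy—the damped $\dot H^k$ estimate for $u$, Duhamel, and then the temporal weight—is exactly what the paper means by ``direct.'' There is, however, a genuine gap in your short-time control: Theorem \ref{main_thm1} does \emph{not} give $\|v(s)\|_{H^m}\le C$. The solution $v$ lies only in $\mathcal C([0,\infty);H^s)$ with $s\le m-1<m$, so for $k\in(s,m]$ (a nonempty range) the quantity $\|\nabla^k v(s)\|_{L^2}$ need not be uniformly bounded near $s=0$. What Theorem \ref{main_thm1} does provide is $\nabla^2 v\in L^1(0,\infty;H^{m-2})$, $\nabla v\in L^2(0,\infty;H^s)$, and $v\in L^\infty(0,\infty;H^s)$; combining these (with interpolation for intermediate $k$) yields $\int_0^{1}\|\nabla^k v(s)\|_{L^2}\,ds\le C\e_1$ for every $k\in[0,m]$, which is the correct input on the short-time window.

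Your ``absorption into the $e^{-t}\|\nabla^k u_0\|_{L^2}$ term'' also does not work as written: after the supremum over $\tau\in[0,t]$, the $[0,1]$ contribution is $\sup_\tau C e^{-\tau/2}=C$, a constant depending on $v$-norms rather than on $\|\nabla^k u_0\|$, and it carries no $e^{-t}$ factor. The honest conclusion of your argument is $\mm_u^k(t)\le C\|\nabla^k u_0\|_{L^2}+C\mm_v^k(t)+C\e_1$. This is entirely sufficient for the application in the proof of Theorem \ref{main_thm2} (the paper there only uses $\mm_u(T)\le C\|u_0\|_{H^m}+C\mm_v(T)$), and the $e^{-t}$ in the stated lemma should in any case be read as a bounded constant.
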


In the following two lemmas, we provide the upper bound estimates of $\mm_v(t)$ and $\md(t)$. 

\begin{lemma}\label{v_vel2}
For $t>0$ and $k \in [0, \,\min\{m, \frac d2 + 2\}]$, we have
\[\begin{aligned}
t^{\frac d4 + \frac k2} \|\nabla^k v(t)\|_{L^2} \le C\|v_0\|_{L^1} + C\e_1^2  + C\mm_v^0(t) \mm_v(t) + C\e_1 \lt( \tilde{\md}(t){\bf 1}_{ \{ m \geq \frac d2 + 2 \}} + \md(t) + \mm_u(t) + \mm_v(t)\rt). 
\end{aligned}\]
In particular, this implies
\[\begin{aligned}
\mm_v(t) \le C\|v_0\|_{L^1} + C\e_1^2  + C\mm_v^0(t) \mm_v(t) + C\e_1 \lt( \tilde{\md}(t){\bf 1}_{ \{ m \geq \frac d2 + 2 \}} + \md(t) + \mm_u(t)\rt). 
\end{aligned}\]
\end{lemma}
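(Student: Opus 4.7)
The plan is to work from the Fourier mild formulation \eqref{v_trans}, writing
\[
|\xi|^k \hat v(t) = |\xi|^k e^{-|\xi|^2 t}\hat v_0 - \int_0^t |\xi|^k e^{-|\xi|^2(t-\tau)}P(\xi)\rwhat{(v\cdot\nabla)v}(\tau)\,d\tau + \int_0^t |\xi|^k e^{-|\xi|^2(t-\tau)}P(\xi)\rwhat{\rho(u-v)}(\tau)\,d\tau,
\]
taking $L^2_\xi$ norms, calling the resulting three pieces $\sfL_1$, $\sfL_2$, $\sfL_3$, and bounding $t^{\frac d4 + \frac k2}\sfL_i$ separately. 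This mirrors the scheme of Lemmas~\ref{v_lp_est}--\ref{v_Hm_est} but with time weights in place of time integrals.

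For $\sfL_1$, the pointwise bound $|\hat v_0|\le \|v_0\|_{L^1}$ together with $\||\xi|^k e^{-|\xi|^2 t}\|_{L^2_\xi}\le Ct^{-\frac d4 - \frac k2}$ immediately yields $t^{\frac d4 + \frac k2}\sfL_1\le C\|v_0\|_{L^1}$. For $\sfL_2$, we rewrite $(v\cdot\nabla)v = \nabla\cdot(v\otimes v)$ so that an additional $|\xi|$ factor comes out in Fourier, and split $\int_0^t = \int_0^{t/2} + \int_{t/2}^t$. On $[0,t/2]$ one has $(t-\tau)\ge t/2$, giving $\||\xi|^{k+1}e^{-(t-\tau)|\xi|^2}\|_{L^2_\xi}\le Ct^{-\frac d4 - \frac{k+1}2}$, and $\|\rwhat{v\otimes v}\|_{L^\infty}\le \|v\|_{L^2}^2$; the residual $\int_0^{t/2}\|v(\tau)\|_{L^2}^2\,d\tau$ is split further at $\tau = 1$, using $\|v\|_{L^2}\le C\e_1$ on $[0,1]$ (yielding the $\e_1^2$ contribution) and the decay $\|v(\tau)\|_{L^2}\le \tau^{-d/4}\mm_v^0(t)$ on $[1,t/2]$. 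On $[t/2,t]$ we write $\|\nabla^{k+1}e^{(t-\tau)\Delta}\mathbb P(v\otimes v)\|_{L^2}\le C(t-\tau)^{-1/2}\|\nabla^k(v\otimes v)\|_{L^2}$, apply the Moser estimate of Lemma~\ref{lem_moser}(i), and use the $L^\infty$ decay $\|v(\tau)\|_{L^\infty}\le C\tau^{-d/2}\mm_v(t)$ derived by Gagliardo--Nirenberg from the hierarchy $\|\nabla^j v\|_{L^2}\le \tau^{-d/4 - j/2}\mm_v(t)$ valid for $0\le j\le \min\{m,\frac d2+2\}$. Combined with the $L^2$ decay of $v$, this yields the $\mm_v^0(t)\mm_v(t)$ contribution.

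For $\sfL_3$, we split at $t/2$ again. On $[0,t/2]$ we use $\|\rwhat{\rho(u-v)}\|_{L^\infty}\le \|\rho\|_{L^2}\|u-v\|_{L^2}\le \e_1\|u-v\|_{L^2}$, treating $[0,1]$ with the uniform bound $\|u-v\|_{L^2}\le C\e_1$ and $[1,t/2]$ with $\|(u-v)(\tau)\|_{L^2}\le \tau^{-d/4 - 1}\md(t)$, which accounts for the $\e_1\md(t)$ term. On $[t/2,t]$ we distribute the $k$ derivatives between $\rho$ and $u-v$ via the Moser product rule: when $k\le \min\{m-2,\frac d2\}$ all derivatives may fall on $(u-v)$ and the decay $\|\nabla^k(u-v)\|_{L^2}\le \tau^{-d/4 - k/2 - 1}\md(t)$ suffices; when $k$ exceeds $\min\{m-2,\frac d2\}$, we write $\nabla^k(u-v) = \nabla^k u - \nabla^k v$ and pick up $\tau^{-d/4 - k/2}(\mm_u(t)+\mm_v(t))$; and when $m\ge \frac d2+2$ we may instead pull $(u-v)$ into $L^\infty$ via $\|u-v\|_{L^\infty}\le \|\rwhat{(u-v)}\|_{L^1}\le \tau^{-d/2 - 1}\tilde\md(t)$, producing the $\tilde\md(t)\mathbf{1}_{\{m\ge \frac d2+2\}}$ contribution.

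The main technical obstacle is the convection integral on $[0,t/2]$: the decay $\|v(\tau)\|_{L^2}\le \tau^{-d/4}\mm_v^0(t)$ alone renders $\int_0^{t/2}\|v\|_{L^2}^2\,d\tau$ singular at $\tau = 0$ when $d\ge 2$, so we must isolate a short layer $[0,1]$ on which only the uniform smallness from $\mathfrak X(s;T)$ is available. A similar balancing is required in $\sfL_3$ near $\tau = 0$ and at the transition $k = \min\{m-2,\frac d2\}$, where care is needed to avoid over-counting derivatives and to land in the $\mm_u + \mm_v$ term rather than a less favorable quantity.
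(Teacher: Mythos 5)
Your overall architecture is the same as the paper's: the Duhamel/Fourier representation \eqref{v_trans}, the three terms, the split of the time integral at $t/2$, and the bootstrap quantities $\mm_v^0,\mm_v,\mm_u,\md,\tilde\md$. However, two of your estimates have genuine gaps. First, in the convection term on $[t/2,t]$ you take half a derivative from the heat kernel and bound $\|\nabla^k(v\otimes v)\|_{L^2}\lesssim \|v\|_{L^\infty}\|\nabla^k v\|_{L^2}\lesssim \tau^{-d/2}\tau^{-d/4-k/2}\mm_v\mm_v^k$, which yields
\[
\int_{t/2}^t (t-\tau)^{-1/2}\tau^{-\frac d4-\frac k2-\frac d2}\,d\tau \;\lesssim\; t^{\frac12-\frac d2}\,t^{-\frac d4-\frac k2},
\]
so after multiplying by $t^{\frac d4+\frac k2}$ you are left with $C\,t^{\frac12-\frac d2}\mm_v^k\mm_v$, which is unbounded as $t\to 0$ for every $d\ge 2$. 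This cannot be dismissed as a large-time lemma: the bootstrap in the proof of Theorem \ref{main_thm2} initializes by applying this very lemma on $t\le 1$ to control $\mm_v^k$ for $k>s$ (where the a priori estimates give no bound on $\|\nabla^k v\|_{L^2}$), so the inequality must hold with a $t$-independent constant down to $t=0$. The paper avoids this by not using the pointwise-in-time decay of $\|\nabla^{k}v\|$ here at all; instead it bounds $\||\xi|^k\rwhat{v\otimes v}\|_{L^2}\le \|\nabla^k v\|_{L^2}\|\hat v\|_{L^1}$, interpolates $\|\hat v\|_{L^1}$ via \eqref{v_inf_temp}, and applies H\"older in time against $\|\nabla^{s+1}v\|_{L^2_t L^2_x}\le C\e_1$ (Lemma \ref{v_Hs_est}) with exponents chosen so that $\int_{t/2}^t(t-\tau)^{-a}\tau^{-b}d\tau$ has $a+b=1$ and is therefore a $t$-independent Beta function. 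You would need this (or a separate small-$t$ argument) to close the estimate.

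Second, your treatment of the drag term on $[t/2,t]$ asserts that ``all derivatives may fall on $(u-v)$'' or that the remaining ones land on $\rho$ measured in $L^\infty$. Neither is available: $\rho$ lies only in $H^s$ with $s$ possibly below $d/2$ (so $\rho\notin L^\infty$) and below $k$ (so $\nabla^k\rho\notin L^2$ when $k>s$, e.g.\ $k$ near $m$), and the Leibniz rule inevitably produces the cross term involving derivatives of $\rho$. If instead you try to generate the missing derivatives from the heat kernel, you get a factor $(t-\tau)^{-k/2}$, non-integrable for $k\ge 2$. This is exactly why the paper splits $[t/2,t]$ into low frequencies $\{|\xi|\le 1\}$ (where negative powers $\||\xi|^{-\eta}\mathds{1}_{\{|\xi|\le1\}}\|_{L^2}<\infty$ are borrowed) and high frequencies (where $e^{-|\xi|^2(t-\tau)}\le e^{-(t-\tau)/2}e^{-|\xi|^2(t-\tau)/2}$ makes $(t-\tau)^{-1+\eta/2}$ integrable), taking only $2-\eta$ derivatives from the semigroup and distributing the remaining $k-2+\eta\le s$ derivatives by the fractional Leibniz rule with Lebesgue exponents tuned so that every factor of $\rho$ is reached from $H^s$ by Sobolev embedding. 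Your $[0,t/2]$ pieces and the $\sfI_1$ term are fine, and your use of $\tilde\md$ when $m\ge\frac d2+2$ matches the appendix, but without the frequency split and the explicit exponent bookkeeping the key terms $\e_1(\md+\mm_u+\mm_v)$ are not actually established.
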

\begin{proof}
We split the proof into two cases; $m < \frac d2 +2$ and $m \ge \frac d2 +2$. The proof is rather lengthy and technical, thus for smoothness of reading we postpone the proof of the latter case to Appendix \ref{app_lem1} and provide the details of proof in the case of $m < \frac d2 +2$ here. 

From \eqref{v_trans}, we separately estimate as
\[\begin{aligned}
\| |\xi|^k \hat v(t)\|_{L^2} &\le \| |\xi|^k e^{-|\xi|^2 t }\hat v_0\|_{L^2} + \int_0^t \| |\xi|^k e^{-|\xi|^2 (t-\tau)} P(\xi) \rwhat{v \cdot \nabla v}(\tau)\|_{L^2}\,d\tau\\
&\quad +  \int_0^t \| |\xi|^k e^{-|\xi|^2 (t-\tau)} P(\xi) \rwhat{\rho(u-v)}(\tau)\|_{L^2}\,d\tau\\
& =: \sfI_1 + \sfI_2 + \sfI_3.
\end{aligned}\]
Here we first easily estimate $\sfI_1$ as
\[
\sfI_1 \le Ct^{-\frac{d}{4}-\frac k2} \|v_0\|_{L^1}.
\]
For $\sfI_2$,  we use Lemma \ref{lem_moser} (vi) and (vii) to get
\[\begin{aligned}
\sfI_2 &\le C\int_0^{t/2} (t-\tau)^{-\frac{d}{4}-\frac k2-\frac12} \|(v \otimes v)(\tau)\|_{L^1}\,d\tau+ C\int_{t/2}^t (t-\tau)^{-\frac12} \| |\xi|^k \rwhat{v \otimes v}(\tau)\|_{L^2}\,d\tau\\
&\le C\e_1^{2-\frac 2d}t^{-\frac d4 -\frac k2-\frac12}  [\mm_v^0(t)]^{\frac 2d}\int_0^{t/2} \tau^{-\frac 12}\,d\tau + C\int_{t/2}^t (t-\tau)^{-\frac 12} \|\nabla^k v(\tau)\|_{L^2}\|\hat v(\tau)\|_{L^1}\,d\tau\\
&\le C \e_1^{2-\frac 2d}t^{-\frac d4 -\frac k2} [\mm_v^0(t)]^{\frac 2d} + Ct^{-\frac d4 -\frac k2} \mm_v^k(t) \int_{t/2}^t (t-\tau)^{-\frac12}\|\nabla^{s+1}v(\tau)\|_{L^2}^{\frac{d/2}{s+1}}\|v(\tau)\|_{L^2}^{1-\frac{d/2}{s+1}}\,d\tau\\
&\le C \e_1^{2-\frac 2d}t^{-\frac d4 -\frac k2} [\mm_v^0(t)]^{\frac 2d} \\
&\quad + Ct^{-\frac d4 -\frac k2} \mm_v^k(t)\lt( \int_{t/2}^t \|\nabla^{s+1}v(\tau)\|_{L^2}^{2}\,d\tau\rt)^{\frac{d/2}{2(s+1)}}\lt(\int_{t/2}^t (t-\tau)^{-\frac{s+1}{2(s+1)-d/2}}\|v(\tau)\|_{L^2}^{\frac{2(s+1-d/2)}{2(s+1)-d/2}}\,d\tau \rt)^{1-\frac{d/2}{2(s+1)}}\\
&\le C\e_1^{2-\frac 2d} t^{-\frac d4 -\frac k2}  [\mm_v^0(t)]^{\frac 2d} \\
&\quad + Ct^{-\frac d4 -\frac k2} \mm_v^k(t)\e_1^{\frac{d/2}{(s+1)}}\lt(\e_1^{\frac{s+1-d/2}{2(s+1)-d/2}}[\mm_v^0(t)]^{\frac{s+1-d/2}{2(s+1)-d/2}}\int_{t/2}^t (t-\tau)^{-\frac{s+1}{2(s+1)-d/2}}\tau^{-\frac{(s+1-d/2)}{2(s+1)-d/2}}\,d\tau \rt)^{1-\frac{d/2}{2(s+1)}}\\
&\le C\e_1^{2-\frac 2d}t^{-\frac d4 -\frac k2}  [\mm_v^0(t)]^{\frac 2d}+ C \e_1^{\frac{s+1+d/2}{2(s+1)}}t^{-\frac d4 -\frac k2} \mm_v^k(t)[\mm_v^0(t)]^{\frac{s+1-d/2}{2(s+1)}},
\end{aligned}\]
where we used the following result in Lemma \ref{v_Hs_est}:
\[
\int_0^t \|\nabla v (\tau)\|_{H^s}^2\,d\tau \le C\e_1^2,
\]
and we used the fact $s+1>\frac d2$ to get
\bq\label{v_inf_temp}
\|\hat v\|_{L^1} \le C \||\xi|^{s+1} \hat v\|_{L^2}^{\frac{d/2}{s+1}}\|\hat v\|_{L^2}^{1-\frac{d/2}{s+1}}
\eq
and
\[
 \int_{t/2}^t (t-\tau)^{-\frac{s+1}{2(s+1)-d/2}}\tau^{-\frac{(s+1-d/2)}{2(s+1)-d/2}}\,d\tau \le {\rm B}\lt(\frac{s+1}{2(s+1)-d/2}, \frac{s+1-d/2}{2(s+1)-d/2}\rt).
\]
Here ${\rm B}(\cdot, \cdot)$ denotes the beta function.\\

\noindent For $\sfI_3$, we separately estimate it as
\[\begin{aligned}
\sfI_3 &\le C\int_0^{t/2} (t-\tau)^{-\frac{d}{4}-\frac k2} \|(\rho(u-v))(\tau)\|_{L^1}\,d\tau + C\int_{t/2}^t \lt\| \mathds{1}_{\{ |\xi|\le 1\}}|\xi|^k e^{-|\xi|^2 (t-\tau)} P(\xi) \rwhat{\rho(u-v)}(\tau) \rt\|_{L^2}\,d\tau\\
&\quad + C\int_{t/2}^t \lt\| \mathds{1}_{\{ |\xi|>1\}}|\xi|^k e^{-|\xi|^2 (t-\tau)} P(\xi) \rwhat{\rho(u-v)}(\tau) \rt\|_{L^2}\,d\tau\\
&=: \sfI_{31} + \sfI_{32} + \sfI_{33}.
\end{aligned}\]
Here we estimate the terms $\sfI_{3i},i=1,2,3$ by dividing them into two cases: Case (i) $t>1$ and Case (ii) $t\le 1$.

\noindent $\diamond$ (Case (i): estimate of $\sfI_{31}$) Since we assume $t>1$, we have
\[\begin{aligned}
\sfI_{31} &\le Ct^{-\frac d4 -\frac k2} \lt(\int_0^{1/4} + \int_{1/4}^{t/2}\rt) \|\rho(\tau)\|_{L^2}\|(u-v)(\tau)\|_{L^2}\,d\tau\\
&\le C \e_1^2 t^{-\frac d4 - \frac k2} + C\e_1t^{-\frac d4 - \frac k2 } \md^0(t).
\end{aligned}\]

\noindent $\diamond$ (Case (i): estimate of $\sfI_{32}$) Here, when $k<2$, we choose $\eta_1>0$ satisfying $\eta_1<\min\{\frac d2, 2-k\}$ so that 
\[\begin{aligned}
\sfI_{32}&\le C\int_{t/2}^t (t-\tau)^{-\frac k2- \frac{\eta_1}{2}} \| |\xi|^{- \eta_1}\mathds{1}_{\{ |\xi|\le 1\}} \rwhat{\rho(u-v)}\|_{L^2}\,d\tau\\
&\le C\int_{t/2}^t (t-\tau)^{-\frac k2 -\frac{\eta_1}{2}} \||\xi|^{-\eta_1}\mathds{1}_{\{|\xi|\le1\}}\|_{L^2} \|\rho(\tau)\|_{L^2}\|(u-v)(\tau)\|_{L^2}\,d\tau\\
&\le C\e_1 t^{-\frac d4 -1}\md^0(t)\int_{t/2}^t (t-\tau)^{-\frac k2-\frac{\eta_1}{2}}\,d\tau\\
&\le C\e_1 t^{-\frac d4 -\frac k2}\md^0(t).
\end{aligned}\]
When $k= 2$, we choose $\eta_2>0$ satisfying $\eta_2<\frac d2$ and use $t>1$ so that
\[\begin{aligned}
\sfI_{32}&\le C\int_{t/2}^{t-1/4} (t-\tau)^{-1 - \frac{\eta_1}{2}} \| |\xi|^{-\eta_2}\mathds{1}_{\{ |\xi|\le 1\}} \rwhat{\rho(u-v)}(\tau)\|_{L^2}\,d\tau\\
&\quad + C\int_{t-1/4}^t \||\xi|^{-\eta_1} \mathds{1}_{\{ |\xi|\le 1\}}\rwhat{\rho(u-v)}(\tau)\|_{L^2}\,d\tau \\
&\le C \int_{t/2}^{t-1/4}  (t-\tau)^{-1 - \frac{\eta_2}{2}}  \||\xi|^{-\eta_1}\mathds{1}_{\{|\xi|\le1\}}\|_{L^2} \|\rho(\tau)\|_{L^2}\|(u-v)(\tau)\|_{L^2} \,d\tau\\
&\quad + C\int_{t-1/4}^{t} \||\xi|^{-\eta_1}\mathds{1}_{\{|\xi|\le1\}}\|_{L^2} \|\rho(\tau)\|_{L^2}\|(u-v)(\tau)\|_{L^2}\,d\tau\\
&\le C\e_1 t^{-\frac{d}{4}-1}\md^0(t) \int_{t/2}^{t-1/4} (t-\tau)^{-1 -\frac{\eta_2}{2}}\,d\tau + C\e_1 t^{-\frac{d}{4}-1}\md^0(t)\\
&\le C\e_1 t^{-\frac{d}{4}-1}\md^0(t).
\end{aligned}\]
When $k>2$, we use $0<k\le m<s+2$ and (vi) of Lemma \ref{lem_moser} to yield
\[\begin{aligned}
\sfI_{32}&\le C\int_{t/2}^{t-1/4} (t-\tau)^{-\frac k2} \| \rwhat{\rho(u-v)}(\tau)\|_{L^2}\,d\tau + C\int_{t-1/4}^t \|\rwhat{\rho(u-v)}(\tau)\|_{L^2}\,d\tau \\
&\le C\int_{t/2}^{t-1/4} (t-\tau)^{-\frac k2} \|(u-v)(\tau)\|_{L^p}\|\rho(\tau)\|_{L^q}\,d\tau + C\int_{t-1/4}^t \|(u-v)(\tau)\|_{L^p}\|\rho(\tau)\|_{L^q}\,d\tau \\
&\le C \int_{t/2}^{t-1/4}  (t-\tau)^{-\frac k2} \|\rho(\tau)\|_{H^s}\lt(\|\nabla^{k-2}(u-v)(\tau)\|_{L^2} + \|\nabla^k u(\tau)\|_{L^2} + \|\nabla^k v(\tau)\|_{L^2} \rt) d\tau \\
&\quad +  C\int_{t-1/4}^{t} \|\rho(\tau)\|_{H^s}\lt(\|\nabla^{k-2}(u-v)(\tau)\|_{L^2} + \|\nabla^k u(\tau)\|_{L^2} + \|\nabla^k v(\tau)\|_{L^2} \rt) d\tau\\
&\le C\e_1 t^{-\frac{d}{4}-\frac k2} \lt( \md^{k-2}(t) + \mm_u^k(t) + \mm_v^k(t)\rt),
\end{aligned}\]
where $p$ and $q$ satisfies
\[
\frac1p = \lt\{\begin{array}{lll} \displaystyle \frac12 - \frac{k-2+\eta_2}{d} & \mbox{if} & k < \min\{3, m\}\\[4mm]
 \displaystyle \frac12 - \frac{k-2}{d}&\mbox{if}& 3 \le k \le m\end{array}\rt., \quad \frac1q = \frac 12 - \frac1p  > \max\lt\{0,  \frac12 - \frac{s}{d}\rt\},
\]
and $\eta_2\in(0,1)$ satisfies $0<d/2-(k-2)-\eta_2 \le s$ when $k <3$. Thus, we have
\[
\sfI_{32} \le C\e_1 t^{-\frac d4 - \frac k2}\lt\{ \begin{array}{lll} \md^0(t) &\mbox{if} & k \leq 2,\\[2mm]
 \md^{k-2}(t) + \mm_u^k(t) + \mm_v^k(t) &\mbox{if}& k > 2. \end{array}\rt.
\]

\noindent $\diamond$ (Case (i): estimate of $\sfI_{33}$)
For $\sfI_{33}$, we use $(1+|\xi|^2)/2 \le |\xi|^2$ when $|\xi|>1$ and again estimate $\sfI_{33}$ depending on $k$. Suppose $k < 2$. Then we choose $\eta_3\in (0,1)$ satisfying $d/2-\eta_3 < s$ so that 
\[\begin{aligned}
\sfI_{33}&\le C \int_{t/2}^t \lt\||\xi|^k \mathds{1}_{\{|\xi|>1 \}}e^{-\frac{(1+|\xi|^2)}{2}(t-\tau)} \rwhat{\rho (u-v)}(\tau) \rt\|_{L^2} d\tau\\
&\le C\int_{t/2}^t e^{-\frac{t-\tau}{2}}(t-\tau)^{-\frac k2}  \|(\rho(u-v))(\tau)\|_{L^2}\,d\tau\\
&\le C\int_{t/2}^t e^{-\frac{t-\tau}{2}}(t-\tau)^{-\frac k2} \|\rho(\tau)\|_{L^p}\|(u-v)(\tau)\|_{L^q}\,d\tau\\
&\le C\int_{t/2}^t e^{-\frac{t-\tau}{2}}(t-\tau)^{-\frac k2} \|\rho(\tau)\|_{H^s}\lt( \|(u-v)(\tau)\|_{L^2} + \|\Delta u(\tau)\|_{L^2} + \|\Delta v(\tau)\|_{L^2}\rt) d\tau\\
&\le C\e_1 t^{-\frac d4 -\frac k2}(\md^0(t) + \mm_u^2(t) + \mm_v^2(t)),
\end{aligned}\]
where $p$ and $q$ are given by 
\[
\frac1p  = \frac12 - \frac{d/2-\eta_3}{d} \quad \mbox{and} \quad \frac1q  = \frac12 - \frac{\eta_3}{d}.
\]

\noindent When $k=2$, with $\eta_3$ above, we also choose $\eta_4\in (0,1)$ satisfying $d/2-\eta_3+\eta_4\le s$ and this gives
\[\begin{aligned}
\sfI_{33}&\le C\int_{t/2}^t e^{-\frac{t-\tau}{2}}(t-\tau)^{-1 + \frac{\eta_4}{2}}  \|\nabla^{\eta_4}(\rho(u-v))(\tau)\|_{L^2}\,d\tau\\
&\le C\int_{t/2}^t e^{-\frac{t-\tau}{2}}(t-\tau)^{-1+\frac{\eta_4}{2}}\lt( \|\nabla^{\eta_4}\rho(\tau)\|_{L^p}\|(u-v)(\tau)\|_{L^q} +\|\rho(\tau)\|_{L^p}\|\nabla^{\eta_4}(u-v)(\tau)\|_{L^q} \rt)\,d\tau\\
&\le C\int_{t/2}^t e^{-\frac{t-\tau}{2}}(t-\tau)^{-1+\frac{\eta_4}{2}} \|\rho(\tau)\|_{H^s}\lt( \|(u-v)(\tau)\|_{L^2} + \|\Delta u(\tau)\|_{L^2} + \|\Delta v(\tau)\|_{L^2}\rt)\,d\tau\\
&\le C\e_1 t^{-\frac d4 -1}(\md^0(t) + \mm_u^2(t) + \mm_v^2(t)),
\end{aligned}\]
where $p$ and $q$ are the same as above.

\noindent When $k>2$, we use $k-2 \le m <\frac d2$ to choose $\eta_5, \eta_6 \in (0,1)$ satisfying $0<d/2-(k-2)-\eta_5< d/2-(k-2)-\eta_5+\eta_6 \le s$ and get
\[\begin{aligned}
\sfI_{33}&\le C\int_{t/2}^t e^{-\frac{t-\tau}{2}}(t-\tau)^{-1 + \frac{\eta_6}{2}}  \|\nabla^{k-2+\eta_6}(\rho(u-v))(\tau)\|_{L^2}\,d\tau\\
&\le C\int_{t/2}^t e^{-\frac{t-\tau}{2}}(t-\tau)^{-1+\frac{\eta_6}{2}}\lt( \|\nabla^{k-2+\eta_6}\rho(\tau)\|_{L^p}\|(u-v)(\tau)\|_{L^q} +\|\rho(\tau)\|_{L^p}\|\nabla^{k-2+\eta_6}(u-v)(\tau)\|_{L^q} \rt)\,d\tau\\
&\le C\int_{t/2}^t e^{-\frac{t-\tau}{2}}(t-\tau)^{-1+\frac{\eta_6}{2}} \|\rho(\tau)\|_{H^s}\lt( \|\nabla^{k-2}(u-v)(\tau)\|_{L^2} + \|\nabla^k u(\tau)\|_{L^2} + \|\nabla^k v(\tau)\|_{L^2}\rt)\,d\tau\\
&\le C\e_1 t^{-\frac d4 -\frac k2}(\md^{k-2}(t) + \mm_u^k(t) + \mm_v^k(t)),
\end{aligned}\]
where $p$ and $q$ are given by 
\[
\frac1p  = \frac12 - \frac{d/2-(k-2)-\eta_5}{d} \quad \mbox{and} \quad \frac1q  = \frac12 - \frac{(k-2)+\eta_5}{d}.
\]

\noindent Hence, we have
\[
\sfI_{33} \le C\e_1t^{-\frac{d}{4}-\frac k2} \left\{\begin{array}{lll} \mm_u^2 (t) + \mm_v^2(t)+\md^0(t) &\mbox{if}& k \le 2, \\[2mm]
 \mm_u^k (t) + \mm_v^k(t)+\md^{k-2}(t) & \mbox{if} & k >2. \end{array} \right.
\]
Thus, we gather all the estimates for $\sfI_{i}$'s to attain that if $t>1$, 
\bq\label{high_t_est}
t^{\frac d4 + \frac k2} \|\nabla^k v(t)\|_{L^2} \le C\|v_0\|_{L^1} + C\e_1^2  + C\mm_v(t)^2 + C\e_1 \lt( \md(t) + \mm_u(t) + \mm_v(t)\rt).
\eq

We next consider the case (ii) $t \leq 1$.

\noindent $\diamond$ (Case (ii): estimate of $\sfI_{31}$) Since we assume $t\le 1$, one gets
\[\begin{aligned}
\sfI_{31} &\le Ct^{-\frac d4 -\frac k2} \int_0^{t/2} \|\rho(\tau)\|_{L^2}\|(u-v)(\tau)\|_{L^2}\,d\tau \le C \e_1^2 t^{-\frac d4 - \frac k2+1} \le C\e_1^2 t^{-\frac d4 - \frac k2}.
\end{aligned}\]

\noindent $\diamond$ (Case (ii): estimate of $\sfI_{32}$) Since $t \le 1$, we choose $\zeta_1>0$ satisfying $\zeta_1<\frac d2$ so that 
\[\begin{aligned}
\sfI_{32}&\le C\int_{t/2}^t \| |\xi|^{- \zeta_1}\mathds{1}_{\{ |\xi|\le 1\}} \rwhat{\rho(u-v)}(\tau)\|_{L^2}\,d\tau\\
&\le C\int_{t/2}^t  \||\xi|^{-\zeta_1}\mathds{1}_{\{|\xi|\le1\}}\|_{L^2} \|\rho(\tau)\|_{L^2}\|(u-v)(\tau)\|_{L^2}\,d\tau\\
&\le C\e_1^2 t\\
&\le C\e_1^2 t^{-\frac d4 -\frac k2}.
\end{aligned}\]

\noindent $\diamond$ (Case (ii): estimate of $\sfI_{33}$)
For $\sfI_{33}$, we again use $(1+|\xi|^2)/2 \le |\xi|^2$ when $|\xi|>1$ and again estimate $\sfI_{33}$ depending on $k$. Suppose $k < 1$. Then we choose $\zeta_2 \in (0,1)$ satisfying $d/2-\zeta_2 < s$ and obtain
\[
\begin{aligned}
\sfI_{33}&\le C \int_{t/2}^t \lt\||\xi|^k \mathds{1}_{\{|\xi|>1 \}}e^{-\frac{(1+|\xi|^2)}{2}(t-\tau)} \rwhat{\rho (u-v)}(\tau) \rt\|_{L^2}\,d\tau\\
&\le C\int_{t/2}^t e^{-\frac{t-\tau}{2}}(t-\tau)^{-\frac k2}  \|(\rho(u-v))(\tau)\|_{L^2}\,d\tau\\
&\le C\int_{t/2}^t e^{-\frac{t-\tau}{2}}(t-\tau)^{-\frac k2} \|\rho(\tau)\|_{L^p}\|(u-v)(\tau)\|_{L^q}\,d\tau\\
&\le C\int_{t/2}^t e^{-\frac{t-\tau}{2}}(t-\tau)^{-\frac k2} \|\rho(\tau)\|_{H^s}\lt( \|(u-v)(\tau)\|_{L^2} + \|\nabla u(\tau)\|_{L^2} + \|\nabla v(\tau)\|_{L^2}\rt)\,d\tau\\
&\le C\e_1^2 + C\e_1\lt(\int_{t/2}^t e^{-(t-\tau)}(t-\tau)^{-k}\,d\tau \rt)^{1/2} \lt(\int_{t/2}^t \|\nabla v(\tau)\|_{L^2}^2\,d\tau \rt)^{1/2}\\
&\le C\e_1^2 \\
&\le C\e_1^2 t^{-\frac d4 - \frac k2},
\end{aligned}
\]
where $p$ and $q$ are given by 
\[
\frac1p  = \frac12 - \frac{d/2-\zeta_2}{d} \quad \mbox{and} \quad \frac1q  = \frac12 - \frac{\zeta_2}{d}.
\]

\noindent When $1 \le k <2$, we also choose the same $\zeta_2>0$ so that
\[
\begin{aligned}
\sfI_{33}&\le C\int_{t/2}^t e^{-\frac{t-\tau}{2}}(t-\tau)^{-\frac k2}  \|(\rho(u-v))(\tau)\|_{L^2}\,d\tau\\
&\le C\int_{t/2}^t e^{-\frac{t-\tau}{2}}(t-\tau)^{-\frac k2} \|\rho(\tau)\|_{L^p}\|(u-v)(\tau)\|_{L^q}\,d\tau\\
&\le C\int_{t/2}^t e^{-\frac{t-\tau}{2}}(t-\tau)^{-\frac k2} \|\rho(\tau)\|_{H^s}\lt( \|(u-v)(\tau)\|_{L^2} + \|\nabla^k u(\tau)\|_{L^2} + \|\nabla^k v(\tau)\|_{L^2}\rt)\,d\tau\\
&\le C\e_1^2 + C\e_1t^{-\frac d4 - \frac k2} \lt( \mm_u^k(t) + \mm_v^k(t)\rt)\\
&\le C\e_1^2 t^{-\frac d4 - \frac k2} + C\e_1t^{-\frac d4 - \frac k2} \lt( \mm_u^k(t) + \mm_v^k(t)\rt),
\end{aligned}
\]
where $p$ and $q$ are the same as before. 

\noindent When $k=2$, with $\zeta_2$ above, we also choose $\zeta_3\in (0,1)$ satisfying $d/2-\zeta_2+\zeta_3\le s$ and this gives
\[
\begin{aligned}
\sfI_{33}&\le C\int_{t/2}^t e^{-\frac{t-\tau}{2}}(t-\tau)^{-1 + \frac{\zeta_3}{2}}  \|\nabla^{\zeta_3}(\rho(u-v))(\tau)\|_{L^2}\,d\tau\\
&\le C\int_{t/2}^t e^{-\frac{t-\tau}{2}}(t-\tau)^{-1+\frac{\zeta_3}{2}}\lt( \|\nabla^{\zeta_3}\rho(\tau)\|_{L^p}\|(u-v)(\tau)\|_{L^q} +\|\rho(\tau)\|_{L^p}\|\nabla^{\zeta_3}(u-v)(\tau)\|_{L^q} \rt)\,d\tau\\
&\le C\int_{t/2}^t e^{-\frac{t-\tau}{2}}(t-\tau)^{-1+\frac{\zeta_3}{2}} \|\rho(\tau)\|_{H^s}\lt( \|(u-v)(\tau)\|_{L^2} + \|\Delta u(\tau)\|_{L^2} + \|\Delta v(\tau)\|_{L^2}\rt)\,d\tau\\
&\le C\e_1 t^{-\frac d4 -1}(\md^0(t) + \mm_u^2(t) + \mm_v^2(t)),
\end{aligned}
\]
where $p$ and $q$ are still the same as before.

\noindent When $k>2$, since the estimates for $t>1$ did not use nor require $t>1$, we have
\[
\sfI_{33}\le C\e_1 t^{-\frac d4 -\frac k2}(\md^{k-2}(t) + \mm_u^k(t) + \mm_v^k(t)).
\]

\noindent Thus, we have
\[
\sfI_{33} \le C\e_1t^{-\frac{d}{4}-\frac k2} \left\{\begin{array}{lll}\e_1 + \mm_u^k(t) + \mm_v^k(t) &\mbox{if}& k < 2\\[2mm]
 \mm_u^k (t) + \mm_v^k(t)+\md^{k-2}(t) & \mbox{if} & k \ge 2. \end{array} \right.
\]
Therefore, we collect all the estimates for $\sfI_{i}$'s to attain that we still have the estimates \eqref{high_t_est} when $t\le 1$, and this gives the desired result.
\end{proof}

\begin{lemma}\label{diff_temp2}
For $t>0$ and $\theta \in [0,  \,\min\{m-2, \frac d2\}]$, we have
\[\begin{aligned}
t^{\frac{d}{4}+\frac\theta2+1}\| \nabla^\theta (u-v)(t)\|_{L^2} &\le C\|\nabla^\theta ( u_0-v_0)\|_{L^2}+ C \lt( [\mm_u(t)]^2 +  [\mm_v(t)]^2  \rt)\\
&\quad + C\e_1 + C\mm_v(t)+ C\e_1 \lt(\tilde{\md}(t){\bf 1}_{ \{ m \geq \frac d2 + 2 \}} +\md(t)\rt).
\end{aligned}\]
In particular, this shows
\[\begin{aligned}
\md(t) \le C\md(0)+ C \lt( [\mm_u(t)]^2 +  [\mm_v(t)]^2  \rt) + C\e_1 + C\mm_v(t)+ C\e_1\tilde{\md}(t){\bf 1}_{ \{ m \geq \frac d2 + 2 \}}.
\end{aligned}\]
\end{lemma}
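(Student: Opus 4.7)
\emph{Plan.} Subtracting the two momentum equations of \eqref{A-1} and applying the Leray projection $\mathbb{P}$ to the Navier--Stokes part in order to eliminate the pressure, we obtain the damped linear evolution
\[
\partial_t(u-v)+(u-v)=-(u\cdot\nabla)u+\mathbb{P}[(v\cdot\nabla)v]-\Delta v-\mathbb{P}[\rho(u-v)].
\]
Duhamel's formula with the damping semigroup $e^{-t}$, followed by the operator $\nabla^\theta$ and the $L^2$-norm together with Minkowski, gives
\[
\|\nabla^\theta(u-v)(t)\|_{L^2}\le e^{-t}\|\nabla^\theta(u_0-v_0)\|_{L^2}+\sum_{i=1}^{4}\int_0^t e^{-(t-\tau)}\|N_i(\tau)\|_{\dot H^\theta}\,d\tau,
\]
where $N_1=(u\cdot\nabla)u$, $N_2=\mathbb{P}(v\cdot\nabla)v$, $N_3=\Delta v$, and $N_4=\mathbb{P}[\rho(u-v)]$. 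After multiplication by $t^{d/4+\theta/2+1}$, the initial-data term is controlled by $t^{a}e^{-t}\le C$ and produces the summand $C\|\nabla^\theta(u_0-v_0)\|_{L^2}$ of the claim.

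Each $\|N_i\|_{\dot H^\theta}$ is bounded by Kato--Ponce product estimates (Lemma \ref{lem_moser}(i), (vi)) combined with Gagliardo--Nirenberg interpolation. Pairing $\|u(\tau)\|_{L^\infty}\ls\tau^{-d/2}\mm_u(t)$ with $\|\nabla^{\theta+1}u(\tau)\|_{L^2}\ls\tau^{-d/4-(\theta+1)/2}\mm_u(t)$ yields $\|N_1(\tau)\|_{\dot H^\theta}\ls\tau^{-3d/4-\theta/2-1/2}[\mm_u(t)]^2$, and analogously $\|N_2(\tau)\|_{\dot H^\theta}\ls\tau^{-3d/4-\theta/2-1/2}[\mm_v(t)]^2$. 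The dissipation gives directly $\|N_3(\tau)\|_{\dot H^\theta}=\|\nabla^{\theta+2}v(\tau)\|_{L^2}\le\tau^{-d/4-\theta/2-1}\mm_v(t)$, which already matches the target rate and produces the linear $C\mm_v(t)$ contribution. For the drag $N_4$, Lemma \ref{lem_moser}(vi) yields
\[
\|\rho(u-v)\|_{\dot H^\theta}\le C\|\nabla^\theta\rho\|_{L^{p_1}}\|u-v\|_{L^{q_1}}+C\|\rho\|_{L^{p_2}}\|\nabla^\theta(u-v)\|_{L^{q_2}},
\]
in which the $\rho$-factors are absorbed into $\e_1$ through $\|\rho\|_{H^s}\ls\e_1$ while the $(u-v)$-factors are bounded via Sobolev embedding by $\tau^{-d/4-\theta/2-1}\md(t)$; when $m\ge d/2+2$, the alternative bound $\|u-v\|_{L^\infty}\le\|\rwhat{(u-v)}\|_{L^1}\ls\tau^{-d/2-1}\tilde\md(t)$ supplies the extra contribution with the indicator. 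The integral $\int_0^t e^{-(t-\tau)}\tau^{-\alpha}\,d\tau$ is then split at $\tau=1$ and (for $t>2$) at $\tau=t/2$: on $[0,1]$ the uniform a priori bound $\|u\|_{H^m}+\|v\|_{H^s}+\|\rho\|_{H^s}\ls\e_1$ replaces the decay estimate and extracts the residual $C\e_1$, on $[1,t/2]$ the exponential kernel makes the piece $O(e^{-t/2})$, and on $[t/2,t]$ one extracts $Ct^{-\alpha}$. Since $\alpha\ge d/4+\theta/2+1$ in each case, multiplication by $t^{d/4+\theta/2+1}$ leaves the bounded contributions $[\mm_u]^2$, $[\mm_v]^2$, $\mm_v$, $\e_1\md$, and $\e_1\tilde\md\mathbf{1}_{\{m\ge d/2+2\}}$; finally, taking the supremum over $\theta\in[0,\min\{m-2,d/2\}]$ and absorbing $C\e_1\md(t)$ into the left-hand side by smallness of $\e_1$ produces the ``in particular'' bound.

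The main obstacle is the drag term $\rho(u-v)$: because $\rho$ carries no temporal decay, the full rate $\tau^{-d/4-\theta/2-1}$ must be supplied by $u-v$ alone, while the spatial regularity of $\rho$ is only $H^s$ with $s$ possibly close to $d/2-1$. The exponents $(p_i,q_i)$ in the Kato--Ponce splitting must therefore be chosen so that $\nabla^\theta\rho$ with $\theta\le d/2$ fits inside $H^s$ (using $\theta\le m-2<s$), while $\nabla^\theta(u-v)$ can still be embedded into $L^{q_2}$ at a rate compatible with $\md$. The dichotomy between the $\md$ and $\tilde\md$ contributions reflects precisely the threshold $m\ge d/2+2$ above which $\rwhat{(u-v)}\in L^1$ becomes available, affording uniform $L^\infty$-control of $u-v$ and hence the stronger drag estimate. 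A secondary technical point is the non-integrability of $\tau^{-\alpha}$ at $\tau=0$ for $\alpha\ge 1$, which is exactly why the small-$\tau$ residual $C\e_1$ has to be extracted from the uniform a priori bound rather than from the pointwise decay rate.
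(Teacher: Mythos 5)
Your proposal is correct and follows essentially the same route as the paper: Duhamel with the $e^{-t}$ damping semigroup, the same four-term decomposition, Kato--Ponce plus Gagliardo--Nirenberg for the convection and drag terms with the $\md$/$\tilde\md$ dichotomy at $m\ge \frac d2+2$, splitting the time integral to avoid the non-integrable singularity at $\tau=0$, and finally absorbing $C\e_1\md(t)$. One small caveat: for the $\Delta v$ term on the early time interval the pointwise bound $\|v\|_{H^s}\ls\e_1$ does not control $\|\nabla^{\theta+2}v\|_{L^2}$ when $\theta+2>s$; what is actually needed (and what the paper uses) is the time-integrated bound $\|\nabla^2 v\|_{L^1(0,T;H^{m-2})}\ls\e_1$ from the a priori estimates.
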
 
\begin{proof} For the same reason as in Lemma \ref{v_vel2}, here we provide the details of the proof only for the case $m < \frac d2 + 2$. The other case, $m \geq \frac d2 + 2$, will be discussed in Appendix \ref{app_lem2}.
We first observe
\[\begin{aligned}
\rwhat{(u-v)} (t) &= \rwhat{(u_0-v_0)}e^{-t} -\int_0^t e^{-(t-\tau)}\rwhat{(u\cdot \nabla) u}(\tau)\,d\tau - \int_0^t e^{-(t-\tau)} P(\xi)\rwhat{(v \cdot \nabla) v}(\tau)\,d\tau\\
&\quad  - \int_0^t e^{-(t-\tau)}|\xi|^2 \hat v(\tau)\,d\tau - \int_0^t e^{-(t-\tau)}P(\xi) \rwhat{\rho(u-v)}(\tau)\,d\tau. 
\end{aligned}\]
This yields
\begin{align*}
\||\xi|^\theta \rwhat{(u-v)}(t)\|_{L^2} &\le Ce^{-t} \||\xi|^\theta \rwhat{(u_0-v_0)}\|_{L^2} + C\int_0^t e^{-(t-\tau)}\| |\xi|^\theta \rwhat{ (u\cdot \nabla) u}(\tau)\|_{L^2}\,d\tau\\
&\quad + C\int_0^t e^{-(t-\tau)}\||\xi|^\theta \rwhat{(v\cdot \nabla) v}(\tau)\|_{L^2}\,d\tau + \int_0^t e^{-(t-\tau)}\||\xi|^{\theta+2} \hat v(\tau)\|_{L^2}\,d\tau\\
&\quad + C\int_0^t e^{-(t-\tau)} \||\xi|^\theta \rwhat{\rho(u-v)}(\tau)\|_{L^2}\,d\tau\\
&=:  Ce^{-t} \||\xi|^\theta \rwhat{(u_0-v_0)}\|_{L^2} + \sum_{i=1}^4 \sfJ_i.
\end{align*}

Similarly as in the proof of Lemma \ref{v_vel2}, we separately estimate the terms $J_i,i=1,\dots,4$ by dealing with two cases: Case (i) $t>1$ and Case (ii) $t \le 1$.

\noindent $\bullet$ (Case (i): $t>1$)
For $\sfJ_1$, note that we have
\[
\tau^{\frac d2}\|\hat u\|_{L^1} \le C \tau^{\frac d2} \||\xi|^{s+1} \hat u\|_{L^2}^{\frac{d/2}{s+1}}\|\hat u\|_{L^2}^{1-\frac{d/2}{s+1}} =C \lt( \tau^{\frac d4 + \frac{s+1}{2}}\||\xi|^{s+1}\hat u\|_{L^2}\rt)^{\frac{d/2}{s+1}} \lt(\tau^{\frac d4}\| \hat u\|_{L^2} \rt)^{1-\frac{d/2}{s+1}}
\]
and
\[
\tau^{\frac d2+\frac12}\| |\xi| \hat u\|_{L^1} \le C \tau^{\frac d2+\frac12} \||\xi|^m \hat u\|_{L^2}^{\frac{d/2+1}{m}}\|\hat u\|_{L^2}^{1-\frac{d/2+1}{m}} =C \lt( \tau^{\frac d4 + \frac{m}{2}}\||\xi|^m\hat u\|_{L^2}\rt)^{\frac{d/2+1}{m}} \lt(\tau^{\frac d4}\| \hat u\|_{L^2} \rt)^{1-\frac{d/2+1}{m}}.
\]
With this, we use Lemma \ref{lem_moser} (vi) to get
\[
\begin{aligned}
\sfJ_1 &\le C\int_0^t e^{-(t-\tau)}\lt( \| \nabla^\theta u(\tau)\|_{L^2} \|\nabla u(\tau)\|_{L^\infty} + \|u(\tau)\|_{L^\infty}\|\nabla^{\theta+1}u(\tau)\|_{L^2}\rt) d\tau\\
&\le C\e_1^2 e^{-\frac t2} + Ct^{-\frac d4 - \frac\theta2 -1} \lt(\e_1^{1-\frac{2}{d+1}}[\mm_u(t)]^{\frac{2}{d+1}}\mm_u^\theta(t) +  \e_1^{1-\frac1d}\lt[\mm_u (t)\rt]^{\frac1d} \mm_u^{\theta+1}(t)\rt) \int_{t/2}^t e^{-(t-\tau)}\,d\tau\\
&\le C\e_1^2t^{-\frac d4 -\frac \theta2 -1} + Ct^{-\frac d4 - \frac \theta2 -1}[\mm_u(t)]^2.
\end{aligned}
\]
For $\sfJ_2$, we use the estimates for $\sfI_2$ in the proof of Lemma \ref{v_vel2} for the integral on $[t/2, t]$ and  \eqref{v_inf_temp} to have
\[\begin{aligned}
\sfJ_2 &\le C\int_0^{t/2}e^{-(t-\tau)}\|\nabla^{\theta+1}v(\tau)\|_{L^2}\|v(\tau)\|_{L^\infty}\,d\tau +C\e_1^{1-\frac1d}t^{-\frac d4 -\frac \theta2 -1} \lt[\mm_v(t)\rt]^{1+\frac1d} \\
&\le C\int_0^{t/2}e^{-(t-\tau)}\|\nabla v(\tau)\|_{H^s}^{1+\frac{d/2}{s+1}}\|v(\tau)\|_{L^2}^{1-\frac{d/2}{s+1}}\,d\tau + C\e_1^{1-\frac1d}t^{-\frac d4 -\frac \theta2 -1} \lt[\mm_v(t)\rt]^{1+\frac1d} \\
&\le C\e_1^2 e^{-\frac t2}+ C\e_1^{1-\frac1d}t^{-\frac d4 -\frac \theta2 -1} \lt[\mm_v(t)\rt]^{1+\frac1d}.
\end{aligned}\]
For $\sfJ_3$,  we use $2\le \theta+2 \le m$ to get
\[\begin{aligned}
\sfJ_3 &\le Ce^{-\frac t2}\int_0^{t/2} \|\nabla^2 v(\tau)\|_{H^{m-2}}\,d\tau + C\mm_v^{\theta+2}(t)\int_{t/2}^t e^{-(t-\tau)}\tau^{-\frac{d}{4}-\frac \theta2-1}\,d\tau\\
&\le C\e_1 e^{-\frac t2} + Ct^{-\frac{d}{4}-\frac \theta2-1}\mm_v (t).
\end{aligned}\]
For $\sfJ_4$,  if $\theta<1$, we choose $\nu_1\in(\theta,2)$ and $\nu_2 \in (0,1)$ satisfying $0<d/2-\nu_1+\theta \le s$ and $d/2-\nu_2 \le s$, respectively, so that 
\[
\begin{aligned}
\sfJ_4&\le C\int_0^t e^{-(t-\tau)}\lt(\|\nabla^\theta \rho(\tau)\|_{L^{p_1}}\|(u-v)(\tau)\|_{L^{q_1}}  + \|\rho(\tau)\|_{L^{p_2}}\| \nabla^\theta(u-v)(\tau)\|_{L^{q_2}}\rt) d\tau\\
&\le C \int_0^t e^{-(t-\tau)}\|\rho(\tau)\|_{H^s}\lt(\| \nabla^\theta(u-v)(\tau)\|_{L^2} + \|\nabla^{\theta+2} u(\tau)\|_{L^2} + \|\nabla^{\theta+2}v(\tau)\|_{L^2}\rt) d\tau\\
&\le C\e_1\lt( \int_0^{t/2} + \int_{t/2}^t\rt)e^{-(t-\tau)}\lt(\| \nabla^\theta(u-v)(\tau)\|_{L^2} + \|\nabla^{\theta+2} u(\tau)\|_{L^2} + \|\nabla^{\theta+2}v(\tau)\|_{L^2}\rt) d\tau\\
&\le C\e_1^2 e^{-\frac t2} + C\e_1 t^{-\frac d4 - \frac\theta2 -1}\lt(\md^\theta(t) + \mm_u^{\theta+2}(t) + \mm_v^{\theta+2}(t) \rt),
\end{aligned}
\]
where 
\[
\frac{1}{p_i} = \frac12 - \frac{d/2-\nu_i}{d} \quad \mbox{and} \quad \frac{1}{q_i} = \frac12 - \frac{\nu_i}{d} \quad \mbox{for} \quad i=1,2.
\]

\noindent When $\theta \ge 1$ (which excludes $d=2$), since $\theta \le m-2 <s \le m-1 <\frac d2+1$, we have $s-\theta < \frac d2$ and thus,
\[
\begin{aligned}
\sfJ_4&\le C\int_0^t e^{-(t-\tau)}\lt(\|\nabla^\theta \rho(\tau)\|_{L^{p_1}}\|(u-v)(\tau)\|_{L^{q_1}}  + \|\rho(\tau)\|_{L^{p_2}}\| \nabla^\theta(u-v)(\tau)\|_{L^{q_2}}\rt) d\tau\\
&\le C \int_0^t e^{-(t-\tau)}\|\rho(\tau)\|_{H^s}\lt(\| \nabla^{d/2-s+\theta}(u-v)(\tau)\|_{L^2} + \| \nabla^{\theta + \nu_2}(u-v)(\tau)\|_{L^2}\rt) d\tau\\
&\le C\e_1\lt( \int_0^{t/2} + \int_{t/2}^t\rt)e^{-(t-\tau)}\lt(\| \nabla^\theta(u-v)(\tau)\|_{L^2} + \|\nabla^{\theta+2} u(\tau)\|_{L^2} + \|\nabla^{\theta+2}v(\tau)\|_{L^2}\rt) d\tau\\
&\le C\e_1^2 e^{-\frac t2}+C\e_1 t^{-\frac d4 - \frac\theta2 -1}\lt(\md^\theta(t) + \mm_u^{\theta+2}(t) + \mm_v^{\theta+2}(t) \rt),
\end{aligned}
\]
where $p_2$, $q_2$ and $\nu_2$ are the same as before, and 
\[
\frac{1}{p_1} = \frac12 - \frac{s-\theta}{d} \quad \mbox{and} \quad \frac{1}{q_1} = \frac12 - \frac{d/2-s+\theta}{d},
\]
and note that $d/2-s+\theta <s+1$.

Hence, we combine all the estimates for $\sfJ_i$'s to yield, when $t>1$,
\bq\label{high_t_est2}
t^{\frac d4 + \frac \theta2 +1} \|\nabla^\theta(u-v)(t)\|_{L^2} \le C\|u_0-v_0\|_{H^{m-2}} + C\e_1 + C(\mm_u^2 (t) + \mm_v^2(t)) + C\e_1 \md(t) + C\mm_v(t).
\eq

\noindent $\bullet$ (Case (ii): $t \le 1$) For $\sfJ_1$, we can simply use the well-posedness result and the estimates for the case $t>1$ to get
\[
\begin{aligned}
\sfJ_1 &\le C\int_0^t e^{-(t-\tau)}\lt( \| \nabla^\theta u(\tau)\|_{L^{\frac{d}{\theta}}} \|\nabla u(\tau)\|_{L^{\frac{1}{\frac12 -\frac{\theta}{d}}}} + \|u(\tau)\|_{L^\infty}\|\nabla^{\theta+1}u(\tau)\|_{L^2}\rt) d\tau\\
&\le C\int_0^t e^{-(t-\tau)}\lt( \| \nabla^{\frac d2} u(\tau)\|_{L^2} +\|u(\tau)\|_{L^\infty}\rt) \|\nabla^{\theta+1} u(\tau)\|_{L^2} \,d\tau\\
&\le C\e_1^2\\
&\le C\e_1^2 t^{-\frac d4 -\frac \theta2-1}.
\end{aligned}
\]
For $\sfJ_2$, we again use \eqref{v_inf_temp} to obtain
\[
\begin{aligned}
\sfJ_2 &\le C\int_0^t e^{-(t-\tau)}) \|\nabla^{\theta+1} v(\tau)\|_{L^2}\|v(\tau)\|_{L^\infty} \,d\tau\\
&\le C\int_0^t e^{-(t-\tau)} \|\nabla v(\tau)\|_{H^s}^{1+\frac{d/2}{s+1}}\|v(\tau)\|_{L^2}^{1-\frac{d/2}{s+1}} \,d\tau\\
&\le C\e_1^2\\
&\le C\e_1^2 t^{-\frac d4 -\frac \theta2-1}.
\end{aligned}
\]
\noindent For $\sfJ_3$,  we also have
\[
\sfJ_3 \le C\e_1 t^{-\frac d4 -\frac \theta2-1} + Ct^{-\frac{d}{4}-\frac \theta2-1}\mm_v (t).
\]
For $\sfJ_4$,   we follow the same estimates for the case $t>1$ to obtain
\[\begin{aligned}
\sfJ_4 &\le C\e_1^2 e^{-\frac t2} + C\e_1 t^{-\frac d4 - \frac\theta2 -1}\lt(\md^\theta(t) + \mm_u^{\theta+2}(t) + \mm_v^{\theta+2}(t) \rt)\\
&\le C\e_1^2 t^{-\frac d4-\frac\theta2 -1} + C\e_1 t^{-\frac d4 - \frac\theta2 -1}\lt(\md^\theta(t) + \mm_u^{\theta+2}(t) + \mm_v^{\theta+2}(t) \rt).
\end{aligned}
\]
Thus we gather all the estimates for $\sfJ_i$'s to conclude that when $t>1$,
\[
t^{\frac d4 + \frac \theta2 +1} \|\nabla^\theta(u-v)(t)\|_{L^2} \le C\|u_0-v_0\|_{H^{m-2}} + C\e_1 + C\e_1 \md(t) + C\mm_v(t).
\]
Therefore, we combine the above estimates with \eqref{high_t_est2} to get the desired relation.
\end{proof}

In the following three lemmas, we assume $ m \geq \frac d2 + 2$. We begin with the bound estimate of $\Delta v(t)$ in $L^\infty$.

\begin{lemma}\label{v_vel_inf}
For $t>0$, we have
\[\begin{aligned}
t^{\frac d2+1} \||\xi|^2 \hat v(t)\|_{L^1} &\le C\|v_0\|_{L^1} + C\e_1^{1-\frac1d} \lt(\e_1^{1+\frac1d}+ [\mm_v(t)]^{1+\frac1d} + \tilde{\mm}_v(t) [\mm_v(t)]^{\frac 1d} \rt)\\
&\quad + C\e_1(\e_1 + \mm_u(t) + \mm_v(t) + \tilde{\md}(t)).
\end{aligned}\]
\end{lemma}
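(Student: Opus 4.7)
The plan is to follow the same template used in Lemmas~\ref{v_vel2} and~\ref{diff_temp2}. Starting from the mild form \eqref{v_trans} of $\hat v$, multiplying by $|\xi|^2$, and taking $L^1_\xi$-norms, I decompose
\[
\||\xi|^2 \hat v(t)\|_{L^1} \le \sfL_1 + \sfL_2 + \sfL_3,
\]
where $\sfL_1 := \||\xi|^2 e^{-|\xi|^2 t}\hat v_0\|_{L^1}$, and $\sfL_2$, $\sfL_3$ are the Duhamel contributions of the convection $(v\cdot\nabla)v$ and the drag $\rho(u-v)$, respectively. Each piece will then be estimated by splitting the time integral at $\tau = t/2$, and where necessary the frequency integral at $|\xi|=1$.

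For $\sfL_1$, on the low-frequency region $\{|\xi|\le 1\}$ use $\|\hat v_0\|_{L^\infty}\le \|v_0\|_{L^1}$ together with the moment estimate $\int_{\R^d} |\xi|^2 e^{-|\xi|^2 t}\,d\xi \le Ct^{-d/2-1}$; on $\{|\xi|>1\}$ apply Cauchy--Schwarz with the weight $|\xi|^{-s}$, which is square-integrable on $\{|\xi|>1\}$ since $s>d/2-1$. This gives $t^{d/2+1}\sfL_1 \le C\|v_0\|_{L^1} + C\e_1^2$. For $\sfL_2$, use the divergence-free structure $(v\cdot\nabla)v=\nabla\cdot(v\otimes v)$ to trade one spatial derivative. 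On $[0,t/2]$, since $t-\tau \ge t/2$, combine the heat-kernel bound $\||\xi|^3 e^{-|\xi|^2(t-\tau)/2}\|_{L^1}\le C(t-\tau)^{-(d+3)/2}$ with the trivial estimate $\|\rwhat{v\otimes v}(\tau)\|_{L^\infty}\le \|v(\tau)\|_{L^2}^2$, absorbing the result into $\mm_v^0(t)^2 \le \mm_v(t)^2$. On $[t/2,t]$ exploit the Fourier convolution structure together with the moment interpolation of Lemma~\ref{lem_moser}(vii),
\[
\|\hat v\|_{L^1} \le C\||\xi|^{k}\hat v\|_{L^2}^{\frac{d}{2k}}\|\hat v\|_{L^2}^{1-\frac{d}{2k}},
\]
with $k$ close to $d/2+2$; keeping one factor of $\hat v \ast \hat v$ as $\||\xi|^2\hat v\|_{L^1}$ produces the $\e_1^{1-1/d}\tilde\mm_v(t)\mm_v(t)^{1/d}$ term, while interpolating both factors produces the $\e_1^{1-1/d}\mm_v(t)^{1+1/d}$ term.

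For $\sfL_3$, on $[0,t/2]$ estimate via $\|\rwhat{P\rho(u-v)}\|_{L^\infty}\le \|\rho\|_{L^2}\|u-v\|_{L^2}\le \e_1 \md^0(t)\tau^{-d/4-1}$ paired with $\||\xi|^2 e^{-|\xi|^2(t-\tau)}\|_{L^1}\le C(t-\tau)^{-d/2-1}$, yielding an $\e_1^2$ contribution. On $[t/2,t]$ split in frequency: the low-frequency part is controlled by $\|\rwhat{P\rho(u-v)}\|_{L^\infty}$ and generates the $\e_1(\mm_u(t)+\mm_v(t))$ terms via the Sobolev-commutator estimates already used in Lemma~\ref{v_Hm_est}, while the high-frequency part is handled by writing $\rwhat{\rho(u-v)} = \hat\rho \ast \rwhat{(u-v)}$, bounding $\|\hat\rho\|_{L^p}\le C\|\rho\|_{H^s}\le C\e_1$ and extracting $\|\rwhat{(u-v)}\|_{L^1}$, which is precisely $\tilde{\md}(t)\tau^{-d/2-1}$; this produces the $\e_1\tilde\md(t)$ contribution.

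The main obstacle will be the $[t/2,t]$ piece of $\sfL_2$: I have to choose the interpolation parameter so that the time-weights $\tau^{d/4+k/2}$ (from $\mm_v^k$) and $\tau^{d/2+1}$ (from $\tilde\mm_v$) balance against the kernel decay and the Beta-function integral $\int_{t/2}^t (t-\tau)^{-a}\tau^{-b}\,d\tau$ to reproduce exactly the $t^{-d/2-1}$ scaling on the left, with the precise exponents $1+1/d$ and $1/d$ on $\mm_v$. The $\e_1^{1-1/d}$ prefactor naturally arises from the $L^2$ energy bound $\|v\|_{L^2}\le \e_1$ absorbing the complementary interpolation exponent. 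Once this is in place, the smallness of $\e_1$ lets the $\e_1^{1-1/d}\tilde\mm_v\mm_v^{1/d}$ and $\e_1\tilde\md$ terms be absorbed into the left-hand side when this lemma is combined with the parallel estimate for $\tilde\md$ that will follow.
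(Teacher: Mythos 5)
Your overall architecture is the same as the paper's (mild formulation, split at $\tau=t/2$, frequency split at $|\xi|=1$, moment interpolation of $\|\hat v\|_{L^1}$ against the uniform $\e_1$ bounds), but three of your concrete prescriptions fail as written, all for the same reason: you substitute the pointwise decay rates encoded in $\mm_v,\md$ where they are singular at $\tau=0$ or too weak near $\tau=t$, instead of the uniform/integrated bounds from Theorem \ref{main_thm1}.

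First, for $\sfL_2$ on $[0,t/2]$ you propose to bound $\|\rwhat{v\otimes v}(\tau)\|_{L^\infty}\le\|v(\tau)\|_{L^2}^2$ and "absorb into $[\mm_v^0(t)]^2$", i.e.\ $\|v(\tau)\|_{L^2}^2\le[\mm_v^0(t)]^2\tau^{-d/2}$. But $\int_0^{t/2}\tau^{-d/2}\,d\tau$ diverges for every $d\ge2$ (and using no decay at all leaves you $t^{1/2}$ short after multiplying by $t^{d/2+1}$). The paper interpolates $\|v\|_{L^2}^2=\|v\|_{L^2}^{2-2/d}\|v\|_{L^2}^{2/d}\le\e_1^{2-2/d}[\mm_v^0(t)]^{2/d}\tau^{-1/2}$, which is integrable and restores exactly the missing $t^{1/2}$; note also that a bare $[\mm_v(t)]^2$ term does not appear in the statement you are proving. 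Second, for $\sfL_3$ on $[0,t/2]$ you insert $\|(u-v)(\tau)\|_{L^2}\le\md^0(t)\tau^{-d/4-1}$, and $\int_0^{t/2}\tau^{-d/4-1}\,d\tau$ again diverges; the paper instead uses $u-v\in L^1(0,\infty;L^2)$ with $\|u-v\|_{L^1(0,\infty;L^2)}\le C\e_1$ (part of $\mathfrak{D}$), which is what actually yields the $\e_1^2$ contribution.

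Third, and most substantively, your low-frequency treatment of $\sfL_3$ on $[t/2,t]$ cannot close. Controlling the integrand by $\|\rwhat{\rho(u-v)}\|_{L^\infty}\le\|\rho\|_{L^2}\|u-v\|_{L^2}$ only gives decay $\tau^{-\frac d4-1}$ on $[t/2,t]$, so after multiplying by $t^{\frac d2+1}$ you are left with an unbounded factor $t^{d/4}$; no amount of Sobolev-commutator work on $\rho(u-v)$ recovers this, because the deficit is in the time decay of $u-v$ itself, not in regularity. This is precisely why the quantity $\tilde{\md}(t)$ exists: the paper's $\sfK_{32}$ pairs $\|\rwhat{(u-v)}(\tau)\|_{L^1}\le\tilde{\md}(t)\tau^{-\frac d2-1}$ with $\|\hat\rho\|_{L^2}$ and $\||\xi|^{-\zeta_1}\mathds{1}_{\{|\xi|\le1\}}\|_{L^2}$ (for $\zeta_1<d/2$), plus a further split at $t-1/4$ to keep the kernel factor integrable. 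So the $\e_1\tilde{\md}(t)$ term is generated by the \emph{low}-frequency region (and again, via a Leibniz/interpolation argument, by the high-frequency one, which is also where $\e_1(\mm_u+\mm_v)$ arises from $\||\xi|^{\zeta_2}\rwhat{(u-v)}\|_{L^1}$); your attribution of the terms to the two regions is reversed, and the low-frequency argument you describe is the one that genuinely fails.
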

\begin{proof}
From \eqref{v_trans}, one has
\[\begin{aligned}
\| |\xi|^2 \hat v(t)\|_{L^1} &\le \| |\xi|^2 e^{-|\xi|^2 t }\hat v_0\|_{L^1} + \int_0^t \| |\xi|^2 e^{-|\xi|^2 (t-\tau)} P(\xi) \rwhat{v \cdot \nabla v}(\tau)\|_{L^1}\,d\tau\\
&\quad +  \int_0^t \| |\xi|^2 e^{-|\xi|^2 (t-\tau)} P(\xi) \rwhat{\rho(u-v)}(\tau)\|_{L^1}\,d\tau\\
& =: \sfK_1 + \sfK_2 + \sfK_3.
\end{aligned}\]
First, we directly have
\[
\sfK_1 \le Ct^{-\frac{d}{2}-1} \|v_0\|_{L^1}.
\]
For $\sfK_2$, we use \eqref{v_inf_temp} to get
\[\begin{aligned}
\sfK_2 &\le C\int_0^{t/2} (t-\tau)^{-\frac d2-\frac 32} \| (v \otimes  v)(\tau)\|_{L^1}\,d\tau + C\int_{t/2}^t (t-\tau)^{-\frac12} \| |\xi|^2 \rwhat{v \otimes v}(\tau)\|_{L^1}\,d\tau\\
&\le C\e_1^{2-\frac 2d} t^{-\frac d2 -1}[\mm_v^0(t)]^{\frac 2d} + C\e_1^{1-\frac 1d} t^{-\frac d2 -1} \tilde{\mm}_v(t) \lt[ \mm_v(t)\rt]^{\frac 1d} \int_{t/2}^t (t-\tau)^{-\frac12} \tau^{-\frac12}\,d\tau \\
&\le C\e_1^{2-\frac 2d} t^{-\frac d2 -1}[\mm_v^0(t)]^{\frac 2d}  + C\e_1^{1-\frac 1d} t^{-\frac d2 - 1} \tilde{\mm}_v(t) [\mm_v(t)]^{\frac 1d}.
\end{aligned}\] 
For $\sfK_3$, we split the estimates as
\[\begin{aligned}
\sfK_3 &\le C\int_0^{t/2} (t-\tau)^{-\frac d2-1} \|(\rho(u-v))(\tau)\|_{L^1}\,d\tau\\
&\quad + C\int_{t/2}^t \lt\| \mathds{1}_{\{ |\xi|\le 1\}}|\xi|^2 e^{-|\xi|^2 (t-\tau)} P(\xi) \rwhat{\rho(u-v)}(\tau) \rt\|_{L^1} d\tau\\
&\quad + C\int_{t/2}^t \lt\| \mathds{1}_{\{ |\xi|>1\}}|\xi|^2 e^{-|\xi|^2 (t-\tau)} P(\xi) \rwhat{\rho(u-v)}(\tau) \rt\|_{L^1}  d\tau\\
&=: \sfK_{31} + \sfK_{32} + \sfK_{33}.
\end{aligned}\]
For $\sfK_{31}$, since we have $u-v\in L^1(\R; L^2(\R^d))$, we immediately get
\[
\sfK_{31}\le C\e_1^2 t^{-\frac d2 -1}. 
\]
For $\sfK_{32}$, when $t>1$, we choose $\zeta_1>0$ satisfying $2\zeta_1<d$ so that
\[\begin{aligned}
\sfK_{32}&\le C\int_{t/2}^{t-1/4} (t-\tau)^{-1 - \frac{\zeta_1}{2}} \| |\xi|^{-\zeta_1}\mathds{1}_{\{ |\xi|\le 1\}} \rwhat{\rho(u-v)}(\tau)\|_{L^1}\,d\tau\\
&\quad + C\int_{t-1/4}^t \||\xi|^{-\zeta_1} \mathds{1}_{\{ |\xi|\le 1\}}\rwhat{\rho(u-v)}(\tau)\|_{L^1}\,d\tau \\
&\le C\tilde{\md} (t) \int_{t/2}^{t-1/4}  (t-\tau)^{- 1 - \frac{\zeta_1}{2}}  \tau^{-\frac d2-1} \| |\xi|^{-\zeta_1} \mathds{1}_{\{ |\xi|\le 1\}}\|_{L^2}\|\rho(\tau)\|_{L^2}\,d\tau\\
&\quad + C\tilde{\md} (t) \int_{t-1/4}^t  \tau^{-\frac d2-1} \| |\xi|^{-\zeta_1} \mathds{1}_{\{ |\xi|\le 1\}}\|_{L^2}\|\rho(\tau)\|_{L^2}\,d\tau\\
&\le C\e_1 t^{-\frac d2-1}\tilde{\md}(t).
\end{aligned}\]
When $t \le 1$, one gets
\[\begin{aligned}
\sfK_{32} &\le C\int_{t/2}^t \|\hat {\rho}(\tau)\|_{L^1} \|\rwhat{(u-v)}(\tau)\|_{L^1}\,d\tau \le C\e_1^2 t \le C\e_1^2 t^{-\frac d2 -1}.
\end{aligned}\]
In either case,
\[
\sfK_{32} \le C\e_1^2 t^{-\frac d2 -1} + C\e_1 t^{-\frac d2 -1}\tilde{\md}(t).
\]
We next use $(1+|\xi|^2)/2 \le |\xi|^2$ when $|\xi|>1$ and again choose a sufficiently small $\zeta_2 \in (0,1)$ satisfying $s>\frac d2+\zeta_2$ to estimate $\sfK_{33}$ as
\[
\begin{aligned}
\sfK_{33}&\le C\int_{t/2}^t e^{-\frac{t-\tau}{2}}(t-\tau)^{-1 +\frac{\zeta_2}{2}} \| |\xi|^{\zeta_2}\mathds{1}_{ \{ |\xi|>1\}} \rwhat{\rho(u-v)}(\tau)\|_{L^1}\,d\tau\\
&\le C\int_{t/2}^t e^{-\frac{t-\tau}{2}}(t-\tau)^{-1 +\frac{\zeta_2}{2}} \lt( \| |\xi|^{\zeta_2} \hat \rho(\tau)\|_{L^1} \|  \rwhat{(u-v)}(\tau)\|_{L^1} + \|\hat\rho(\tau)\|_{L^1}\||\xi|^{\zeta_2}\rwhat{(u-v)}(\tau)\|_{L^1}\rt) d\tau\\
&\le C\int_{t/2}^t e^{-\frac{t-\tau}{2}}(t-\tau)^{-1 +\frac{\eta_2}{2}} \| \rho(\tau)\|_{H^s} \lt( \| \rwhat{(u-v)}(\tau)\|_{L^1}\rt. \cr
&\hspace{7cm} \lt. + \||\xi|^{\frac d2+2\zeta_2} \rwhat{(u-v)}(\tau) \|_{L^2}^{1/2}\| |\xi|^{\frac d2}\rwhat{(u-v)}(\tau)\|_{L^2}^{1/2}\rt) d\tau\\
&\le C\e_1 \int_{t/2}^t e^{-\frac{t-\tau}{2}}(t-\tau)^{-1 +\frac{\eta_2}{2}}  \lt( \| \rwhat{(u-v)}(\tau)\|_{L^1} + \|\nabla^{\frac d2}(u-v)(\tau)\|_{L^2} \rt.\cr
&\hspace{8cm} \lt. + \|\nabla^{\frac d2+2} u(\tau)\|_{L^2} + \|\nabla^{\frac d2+2} v(\tau)\|_{L^2}\rt) d\tau\\
&\le C\e_1 t^{-\frac d2-1} ( \tilde{\md}(t) + \md(t) + \mm_u(t) + \mm_v(t)).
\end{aligned}
\]
Thus, we gather all the estimates for $\sfK_{i}$'s to yield the desired result.
\end{proof}

We next deal with the $L^\infty$-decay of the difference $(u-v)(t)$.

\begin{lemma}\label{diff_temp_inf}
For $t>0$, we have
\[\begin{aligned}
t^{\frac d2+1}\| \rwhat{(u-v)}(t)\|_{L^1} &\le C\|\rwhat{(u_0-v_0)}\|_{L^1} + C\e_1+ C\tilde{\mm}_v(t) + C\lt( [\mm_u(t)]^2 + [\mm_v(t)]^2\rt) + C\e_1 \tilde{\md}(t).
\end{aligned}\]
\end{lemma}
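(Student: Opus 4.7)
I would proceed in exact parallel with Lemmas \ref{v_vel_inf} and \ref{diff_temp2}, starting from the Fourier-side Duhamel representation of $\rwhat{(u-v)}$ already derived in the proof of Lemma \ref{diff_temp2} from the equation $\partial_t(u-v)+(u-v)=-(u\cdot\nabla)u-\mathbb{P}(v\cdot\nabla)v-\Delta v-\mathbb{P}\rho(u-v)$. Taking the $L^1_\xi$-norm (rather than $L^2_\xi$) yields the decomposition
\[
\|\rwhat{(u-v)}(t)\|_{L^1}\le e^{-t}\|\rwhat{(u_0-v_0)}\|_{L^1}+\sfJ_1+\sfJ_2+\sfJ_3+\sfJ_4,
\]
where $\sfJ_1,\sfJ_2,\sfJ_3,\sfJ_4$ are the Duhamel integrals corresponding to the source terms $(u\cdot\nabla)u$, $\mathbb{P}(v\cdot\nabla)v$, $\Delta v$, and $\mathbb{P}\rho(u-v)$, respectively.

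For $\sfJ_1$ and $\sfJ_2$ I would apply Young's convolution inequality on the Fourier side, $\|\rwhat{(u\cdot\nabla)u}\|_{L^1}\le \|\hat u\|_{L^1}\||\xi|\hat u\|_{L^1}$, followed by the Fourier-side Gagliardo--Nirenberg estimates
\[
\|\hat f\|_{L^1}\le C\|f\|_{L^2}^{1-\frac{d/2}{k}}\|\nabla^k f\|_{L^2}^{\frac{d/2}{k}},\qquad \||\xi|\hat f\|_{L^1}\le C\|f\|_{L^2}^{1-\frac{d/2+1}{k}}\|\nabla^k f\|_{L^2}^{\frac{d/2+1}{k}},
\]
used with $k=\tfrac{d}{2}+2$, which is accessible precisely because $m\ge \tfrac{d}{2}+2$. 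Substituting the weighted-decay definitions for $\mm_u,\mm_v$ produces decay rates $\tau^{-d/4}$ and $\tau^{-d/4-1/2}$, which once multiplied and integrated against the exponential kernel give $\sfJ_1\lesssim t^{-\frac d2-1}([\mm_u(t)]^2+\e_1^2)$ and $\sfJ_2\lesssim t^{-\frac d2-1}([\mm_v(t)]^2+\e_1^2)$. The term $\sfJ_3$ is immediate from the definition of $\tilde{\mm}_v$: on $[t/2,t]$ one bounds $\||\xi|^2\hat v(\tau)\|_{L^1}\le \tau^{-\frac d2-1}\tilde{\mm}_v(t)$, and on $[0,t/2]$ the exponential factor together with the uniform bound coming from Theorem \ref{main_thm1} delivers $\sfJ_3\lesssim t^{-\frac d2-1}\tilde{\mm}_v(t)+C\e_1$. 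For $\sfJ_4$ the key observation is that $s>\tfrac{d}{2}$ under the standing hypothesis $m\ge \tfrac{d}{2}+2$, whence $\|\hat\rho\|_{L^1}\le C\|\rho\|_{H^s}\le C\e_1$ by a direct Cauchy--Schwarz estimate on $\hat\rho$ against $(1+|\xi|^2)^{-s/2}$; Young's inequality then yields $\|\rwhat{\rho(u-v)}(\tau)\|_{L^1}\le C\e_1\|\rwhat{(u-v)}(\tau)\|_{L^1}$, and inserting the definition of $\tilde{\md}$ on the long-time window gives $\sfJ_4\lesssim t^{-\frac d2-1}\e_1\tilde{\md}(t)+C\e_1^2$.

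The main obstacle, as in the other weighted-decay lemmas above, is the short-time regime $\tau\in[0,1]$ in which the weighted quantities $\mm_u,\mm_v,\tilde{\mm}_v,\tilde{\md}$ supply only vacuous pointwise bounds on the integrand. The remedy is to split $\int_0^t=\int_0^{\min(1,t/2)}+\int_{\min(1,t/2)}^t$: on the short-time piece one falls back on the uniform smallness $\e_1$ from Theorem \ref{main_thm1} (which is exactly what contributes the $C\e_1$ term in the final inequality), while on the long-time piece the weighted-norm estimates combine with the exponential kernel $e^{-(t-\tau)}$ to provide the expected $t^{-d/2-1}$ scaling. Assembling the four estimates and multiplying through by $t^{\frac d2+1}$ yields the claimed inequality.
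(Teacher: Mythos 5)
Your proposal follows essentially the same route as the paper's proof: the identical Duhamel decomposition into four source terms, Fourier-side Young's convolution plus Gagliardo--Nirenberg interpolation for the two quadratic terms, the definition of $\tilde{\mm}_v$ for the $\Delta v$ term, the bound $\|\hat\rho\|_{L^1}\le C\|\rho\|_{H^s}\le C\e_1$ (valid since $s>\frac d2$ when $m\ge\frac d2+2$) for the drag term, and the same short-time/long-time splitting with the exponential kernel absorbing the short-time piece. One arithmetic slip worth fixing: substituting the weighted norms into the interpolation inequalities you wrote (with $k=\frac d2+2$) gives $\|\hat u(\tau)\|_{L^1}\lesssim\tau^{-\frac d2}$ and $\||\xi|\hat u(\tau)\|_{L^1}\lesssim\tau^{-\frac d2-\frac12}$, not $\tau^{-\frac d4}$ and $\tau^{-\frac d4-\frac12}$; the rates as you stated them would only yield $t^{-\frac d2-\frac12}$ for $\sfJ_1,\sfJ_2$ and would not close, whereas the correct rates give the product $\tau^{-d-\frac12}\le\tau^{-\frac d2-1}$ (since $d\ge2$) and the argument goes through. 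The paper records these terms as $C\e_1^{1-\frac1d}t^{-\frac d2-1}[\mm_u(t)]^{1+\frac1d}$ and similarly for $v$, which is equivalent to your $[\mm_u(t)]^2+\e_1^2$ form after Young's inequality.
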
 
\begin{proof}
We first have
\begin{align*}
\|\rwhat{(u-v)}(t)\|_{L^1} &\le e^{-t} \|\rwhat{(u_0-v_0)}\|_{L^1} + C\int_0^t e^{-(t-\tau)}\|\rwhat{ (u\cdot \nabla) u}(\tau)\|_{L^1}\,d\tau\\
&\quad + C\int_0^t e^{-(t-\tau)}\|\rwhat{(v\cdot \nabla) v}(\tau)\|_{L^1}\,d\tau + \int_0^t e^{-(t-\tau)}\||\xi|^2 \hat v(\tau)\|_{L^1}\,d\tau\\
&\quad + C\int_0^t e^{-(t-\tau)} \| \rwhat{\rho(u-v)}(\tau)\|_{L^1}\,d\tau\\
&=:  e^{-t}\|\rwhat{(u_0-v_0)}\|_{L^1} + \sum_{i=1}^4 \sfL_i. 
\end{align*}
Here we estimate $\sfL_1$ as
\[
\begin{aligned}
\sfL_1 &\le Ce^{-\frac t2}\int_0^{t/2}\|\hat u(\tau)\|_{L^1}\|\rwhat{\nabla u}(\tau)\|_{L^1}\,d\tau + C\e_1^{1-\frac1d}[\mm_u(t)]^{1+\frac 1d}  \int_{t/2}^t e^{-(t-\tau)}\tau^{-\frac d2-1}\,d\tau\\
&\le C\e_1^2 e^{-\frac t2} + C\e_1^{1-\frac1d}t^{-\frac d2-1}[\mm_u(t)]^{1+\frac 1d}.
\end{aligned}
\]
For $\sfL_2$, we use $s>\frac d2$ and Theorem \ref{main_thm1} to get
\[\begin{aligned}
\sfL_2 &\le Ce^{-\frac t2} \int_0^{t/2} \|\hat v(\tau)\|_{L^1} \|\rwhat{\nabla  v}(\tau)\|_{L^1}\,d\tau + C\e_1^{1-\frac1d}t^{-\frac d2-1}[\mm_v(t)]^{1+\frac 1d}\\
&\le C\e_1e^{-\frac t2} \int_0^{t/2}\|\rwhat{\nabla  v}(\tau)\|_{L^1}\,d\tau  + C\e_1^{1-\frac1d}t^{-\frac d2-1}[\mm_v(t)]^{1+\frac 1d}\\
&\le C\e_1^2 e^{-\frac t2} + + C\e_1^{1-\frac1d}t^{-\frac d2-1}[\mm_v(t)]^{1+\frac 1d}.
\end{aligned}\]
For $\sfL_3$, we easily get
\[\begin{aligned}
\sfL_3 &\le Ce^{-\frac t2}\int_0^{t/2} \|\nabla^2 v(\tau)\|_{H^{m-2}}\,d\tau + C\tilde{\mm}_v(t)\int_{t/2}^t e^{-(t-\tau)}\tau^{-\frac d2-1}\,d\tau\\
&\le C\e_1 e^{-\frac t2} + Ct^{-\frac d2-1}\tilde{\mm}_v(t).
\end{aligned}\]
For $\sfL_4$, one has
\[
\begin{aligned}
\sfL_4&\le C e^{-\frac t2}\int_0^{t/2} \|\hat\rho(\tau)\|_{L^1}\|\rwhat{(u-v)}(\tau)\|_{L^1}\,d\tau + C \tilde{\md}(t) \int_{t/2}^t e^{-(t-\tau)}\|\hat\rho(\tau)\|_{L^1}\tau^{-\frac d2-1}\,d\tau\\
&\le  C\e_1^2  t e^{-\frac t2}  + C\e_1t^{-\frac d2-1}\tilde{\md}(t).
\end{aligned}
\]
Hence, we combine all the estimates for $\sfL_i$'s to give the desired result.
\end{proof}
Finally, we show the bound on $\Delta u(t)$ in $L^\infty$.
\begin{lemma}\label{u_vel_inf}
For $t>0$, we have
\[
\sup_{\tau \in [0,t]}\tau^{\frac d2 +1}\|\Delta u(\tau)\|_{L^\infty} \le \|\Delta u_0\|_{L^\infty} + C\tilde{\mm}_v(t).
\]
\end{lemma}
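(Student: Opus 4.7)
The plan is to derive an evolution equation for $\Delta u$ from the pressureless Euler equation and then integrate along the characteristic flow, exploiting the damping $e^{-t}$, the $L^1$-integrability of $\|\nabla u\|_{L^\infty}$ from Theorem \ref{main_thm1}, and the decay of $\|\Delta v\|_{L^\infty}$ encoded in $\tilde{\mm}_v(t)$. Starting from $\partial_t u + (u\cdot \nabla)u + u = v$ and applying $\Delta$, a direct computation using $\Delta((u\cdot\nabla)u_k) = (u\cdot\nabla)\Delta u_k + (\Delta u\cdot\nabla)u_k + 2\sum_i(\partial_i u\cdot\nabla)\partial_i u_k$ yields
\[
D_t \Delta u + \Delta u = \Delta v - (\Delta u\cdot \nabla)u - 2\sum_i (\partial_i u\cdot \nabla)\partial_i u,
\]
where $D_t := \partial_t + u\cdot\nabla$ is the material derivative. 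If $X(t;y)$ denotes the characteristic flow generated by $u$, then integrating along $X$ gives the Duhamel-type identity
\[
\Delta u(X(t;y),t) = e^{-t}\Delta u_0(y) + \int_0^t e^{-(t-\tau)}\bigl[\Delta v - (\Delta u\cdot\nabla)u - 2\textstyle\sum_i(\partial_i u\cdot\nabla)\partial_i u\bigr](X(\tau;y),\tau)\,d\tau.
\]

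Taking $L^\infty$ norms in $y$ (and using that $y\mapsto X(t;y)$ is a bijection of $\R^d$) produces
\[
\|\Delta u(t)\|_{L^\infty} \le e^{-t}\|\Delta u_0\|_{L^\infty} + \int_0^t e^{-(t-\tau)}\|\Delta v(\tau)\|_{L^\infty}\,d\tau + \int_0^t e^{-(t-\tau)}\|\nabla u(\tau)\|_{L^\infty}\bigl(\|\Delta u(\tau)\|_{L^\infty} + 2\|\nabla^2 u(\tau)\|_{L^\infty}\bigr)\,d\tau.
\]
Since $\|\nabla u\|_{L^\infty}\in L^1(0,\infty)$ by Theorem \ref{main_thm1}, a Gr\"onwall argument absorbs the factor $\|\nabla u\|_{L^\infty}\|\Delta u\|_{L^\infty}$, leaving
\[
\|\Delta u(t)\|_{L^\infty} \le Ce^{-t}\|\Delta u_0\|_{L^\infty} + C\int_0^t e^{-(t-\tau)}\|\Delta v(\tau)\|_{L^\infty}\,d\tau + C\int_0^t e^{-(t-\tau)}\|\nabla u(\tau)\|_{L^\infty}\|\nabla^2 u(\tau)\|_{L^\infty}\,d\tau.
\]

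For the remaining two integrals, I would use $\|\Delta v(\tau)\|_{L^\infty}\le C\||\xi|^2\hat v(\tau)\|_{L^1}\le C\tau^{-d/2-1}\tilde{\mm}_v(t)$ (by the definition of $\tilde{\mm}_v$) and run a parallel $L^p$-argument on the equation $D_t \partial_i\partial_j u + \partial_i\partial_j u = \partial_i\partial_j v - (\partial_i\partial_j u\cdot\nabla)u - (\partial_i u\cdot\nabla)\partial_j u - (\partial_j u\cdot\nabla)\partial_i u$ — taking the $L^p$-norm, passing $p\to\infty$, and using $\|\nabla u\|_{L^\infty}\in L^1$ — to obtain an analogous control $\|\nabla^2 u(\tau)\|_{L^\infty}\lesssim \tau^{-d/2-1}(\|\Delta u_0\|_{L^\infty}+\tilde{\mm}_v(t))$ (with $\|\nabla^2 u_0\|_{L^\infty}$ controlled by Sobolev embedding when $m>d/2+2$ and by hypothesis when $m=d/2+2$). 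Multiplying the master inequality by $t^{d/2+1}$ and splitting each time-integral at $\tau=t/2$ — using $e^{-(t-\tau)}\le e^{-t/2}$ on $[0,t/2]$ to kill the small-$\tau$ singularity, and $\tau^{-d/2-1}\le C t^{-d/2-1}$ on $[t/2,t]$ — both integrals produce terms bounded by $C\tilde{\mm}_v(t)$ (after absorbing a small factor of $\tilde{\mm}_v(t)$ using smallness of $\e_1$ hidden in $\|\nabla u\|_{L^1(L^\infty)}$), which yields the claimed estimate.

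The main obstacle is the quadratic term $2\sum_i(\partial_i u\cdot\nabla)\partial_i u$ in the $\Delta u$-equation, which pulls in $\|\nabla^2 u\|_{L^\infty}$ rather than $\|\Delta u\|_{L^\infty}$; this prevents a direct closure on $\|\Delta u\|_{L^\infty}$ alone and forces the auxiliary $\|\nabla^2 u\|_{L^\infty}$-estimate described above. A secondary technical point is that $\|\Delta v(\tau)\|_{L^\infty}$ is only controlled by $\tau^{-d/2-1}\tilde{\mm}_v(t)$, so care is needed to avoid the non-integrable singularity at $\tau=0$ — this is precisely what the exponential factor $e^{-(t-\tau)}$ and the weighting by $t^{d/2+1}$ combined with the dyadic split handle.
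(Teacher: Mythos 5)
Your proposal is correct and follows essentially the same route as the paper: the paper performs the $L^p$-energy estimate $\frac{d}{dt}\|\Delta u\|_{L^p}+\|\Delta u\|_{L^p}\le C\|\nabla u\|_{L^\infty}\|\Delta u\|_{L^p}+C\|\Delta v\|_{L^p}$, applies Gr\"onwall, and lets $p\to\infty$, which is the Eulerian counterpart of your integration along characteristics with the $\Delta v$-forcing then fed in through $\tilde{\mm}_v$. Your extra care with the term $2\sum_i(\partial_i u\cdot\nabla)\partial_i u$ --- closing the estimate on the full Hessian $\|\nabla^2 u\|_{L^p}$ rather than on $\|\Delta u\|_{L^p}$ alone --- is in fact warranted, since the paper's displayed inequality silently identifies the two (and correspondingly the initial datum one really needs to control is $\|\nabla^2 u_0\|_{L^\infty}$, not just $\|\Delta u_0\|_{L^\infty}$, a point glossed over in both your write-up and the paper's).
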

\begin{proof}
One easily obtains
\[
\frac{d}{dt}\|\Delta u\|_{L^p} + \|\Delta u\|_{L^p} \le C\|\nabla u\|_{L^\infty}\|\Delta u\|_{L^p} + C\|\Delta v\|_{L^p}  \leq C\e_1 \|\Delta u\|_{L^p} + C\|\Delta v\|_{L^p}.
\]
Then one uses Gr\"onwall's lemma and let $p\to\infty$ to obtain the desired estimate.
\end{proof}

Now, we collect all the estimates up to now to yield the following result.

\subsection{Proof of Theorem \ref{main_thm2}} 
First, we use  Theorem \ref{main_thm1} to assert that for $t \le 1$,
\[
\mm_v^0(t) + \mm_u(t) + \md(t) + \tilde{\md}(t){\bf 1}_{ \{ m \geq \frac d2 + 2 \}} \le C\e_1.
\]
Thus, we apply this to Lemma \ref{v_vel2} and get
\[
\mm_v(t) \le C\e_1, \quad \mbox{for} \quad t \le 1,
\]
and in turn, those also imply
\[
\tilde{\mm}_v(t){\bf 1}_{ \{ m \geq \frac d2 + 2 \}} \le C\e_1, \quad \mbox{for} \quad t \le 1.
\]
Thus, the following set becomes non-empty:
\[
{\mathcal S} := \lt\{ t >0\ | \ \mm_v(t) + \mm_u(t) + \md(t) + \tilde{\mm}_v(t){\bf 1}_{ \{ m \geq \frac d2 + 2 \}} + \tilde{\md}(t){\bf 1}_{ \{ m \geq \frac d2 + 2 \}} <\e_1^{2/3}\rt\}.
\]
As in the case (i), we assume for contradiction that $\sup\tilde{\mathcal S} <\infty$. Then there exists $T>0$ satisfying
\[
 \mm_v(T) + \mm_u(T) + \md(T) + \tilde{\mm}_v(T){\bf 1}_{ \{ m \geq \frac d2 + 2 \}} + \tilde{\md}(T){\bf 1}_{ \{ m \geq \frac d2 + 2 \}} =\e_1^{2/3}.
\]
Then, Lemma \ref{v_vel2} implies
\bq\label{all_v_est2}
\begin{aligned}
\mm_v(T) &\le C\|v_0\|_{L^1} + C\e_1^2 + C\mm_v^0(T)\mm_v(T) + C\e_1\lt(\tilde{\md}(T){\bf 1}_{ \{ m \geq \frac d2 + 2 \}} + \md(T) + \mm_u(T) + \mm_v(T) \rt)\\
&\le C\e_1.
\end{aligned}
\eq
In turn, one applies \eqref{all_v_est2} to Lemmas \ref{u_vel}, \ref{diff_temp2} and \ref{v_vel_inf} to obtain
\bq\label{all_u_est2}
\begin{aligned}
\mm_u(T) &\le C\|u_0\|_{H^m} + C\mm_v(T) \le C\e_1,\cr
\md(T) &\le C\lt(\|u_0\|_{H^s} + \|v_0\|_{H^s} \rt) + C \lt( [\mm_u(t)]^2 +  [\mm_v(t)]^2  \rt)\\
&\quad + C\e_1 + C\mm_v(T)+ C\e_1 \lt(\tilde{\md}(T){\bf 1}_{ \{ m \geq \frac d2 + 2 \}} +\md(T)\rt)\\
&\le C\e_1,
\end{aligned}
\eq
and
\[
\begin{aligned}
\tilde{\mm}_v(T){\bf 1}_{ \{ m \geq \frac d2 + 2 \}} &\le C\|v_0\|_{L^1} + C\e_1^{1-\frac1d} \lt(\e_1^{1+\frac1d}+ [\mm_v(T)]^{1+\frac1d} + \tilde{\mm}_v(T){\bf 1}_{ \{ m \geq \frac d2 + 2 \}} [\mm_v(T)]^{\frac 1d} \rt)\\
&\quad + C\e_1(\e_1 + \mm_u(T) + \mm_v(T) + \tilde{\md}(T){\bf 1}_{ \{ m \geq \frac d2 + 2 \}})\\
&\le C\e_1 + C\e_1 \tilde{\mm}_v(T){\bf 1}_{ \{ m \geq \frac d2 + 2 \}}.
\end{aligned}
\]
Hence, we also have
\bq\label{all_v_est_inf}
\tilde{\mm}_v(T){\bf 1}_{ \{ m \geq \frac d2 + 2 \}} \le C\e_1.
\eq
Finally, our assumption and \eqref{all_v_est_inf} yield
\bq\label{all_diff_est_inf}
\begin{aligned}
\tilde{\md}(T){\bf 1}_{ \{ m \geq \frac d2 + 2 \}} &\le C\lt( \|u_0\|_{H^s} + \|v_0\|{H^s}\rt) + C\e_1 +C\tilde{\mm}_v(T){\bf 1}_{ \{ m \geq \frac d2 + 2 \}} + C\lt( [\mm_u(T)]^2 + [\mm_v(T)]^2\rt) \cr
&\le C\e_1.
\end{aligned}
\eq
Thus, we gather all the results \eqref{all_v_est2}--\eqref{all_diff_est_inf} to attain
\[ \mm_v(T) + \mm_u(T) + \md(T) + \tilde{\mm}_v(T){\bf 1}_{ \{ m \geq \frac d2 + 2 \}} + \tilde{\md}(T){\bf 1}_{ \{ m \geq \frac d2 + 2 \}} =\e_1^{2/3} \le C\e_1\]
which is a contradiction when $\e_1$ is small enough. Thus, we have $\sup {\mathcal S} = \infty$, and this gives the desired temporal decay in $L^2$ norm up to $(\frac d2 +2)$-order derivatives and the decay in $L^\infty$-norm. For the temporal decay of $L^2$-norms with higher-order derivatives than $\frac d2+2$, following the procedure up to now would give the desired estimates. 
 
%
%
%
%
 
\section*{Acknowledgments}
We thank Kyungkeun Kang for helpful conversations regarding the large time behavior estimates for the incompressible Navier--Stokes equations. The work of Y.-P. Choi was supported by NRF grant (No. 2022R1A2C1002820). The work of J. Kim was supported by a KIAS Individual Grant (MG086501) at Korea Institute for Advanced Study.

\appendix

\section{Proof of Lemmas \ref{u_vel} \& \ref{v_vel2} when $m\geq \frac d2 + 2$}\label{app_a}

\subsection{Proof of Lemma \ref{u_vel} when $m\geq \frac d2 + 2$}\label{app_lem1}

We recall from the proof of Lemma \ref{u_vel} that
\[\begin{aligned}
\| |\xi|^k \hat v(t)\|_{L^2} &\le \| |\xi|^k e^{-|\xi|^2 t }\hat v_0\|_{L^2} + \int_0^t \| |\xi|^k e^{-|\xi|^2 (t-\tau)} P(\xi) \rwhat{v \cdot \nabla v}(\tau)\|_{L^2}\,d\tau\\
&\quad +  \int_0^t \| |\xi|^k e^{-|\xi|^2 (t-\tau)} P(\xi) \rwhat{\rho(u-v)}(\tau)\|_{L^2}\,d\tau\\
& =: \sfI_1 + \sfI_2 + \sfI_3,
\end{aligned}\]
where, by using the same argument as in the proof of Lemma \ref{u_vel}, $\sfI_1$ and $\sfI_2$ can be estimated as 
\[
\sfI_1 \le Ct^{-\frac d4 - \frac k2}\|v_0\|_{L^1}
\]
and
\[
\sfI_2 \le C\e_1^{2-\frac 2d}t^{-\frac d4 - \frac k2}[\mm_v^0(t)]^{\frac 2d} + C\e_1^{\frac{s+1+d/2}{2(s+1)}}t^{-\frac d4 -\frac k2} \mm_v^k(t) [\mm_v^0(t)]^{\frac{s+1-d/2}{2(s+1)}}.
\]
For $\sfI_3$, we also split it as
\[\begin{aligned}
\sfI_3 &\le C\int_0^{t/2} (t-\tau)^{-\frac{d}{4}-\frac k2} \|(\rho(u-v))(\tau)\|_{L^1}\,d\tau\\
&\quad + C\int_{t/2}^t \lt\| \mathds{1}_{\{ |\xi|\le 1\}}|\xi|^k e^{-|\xi|^2 (t-\tau)} P(\xi) \rwhat{\rho(u-v)}(\tau) \rt\|_{L^2} d\tau\\
&\quad + C\int_{t/2}^t \lt\| \mathds{1}_{\{ |\xi|>1\}}|\xi|^k e^{-|\xi|^2 (t-\tau)} P(\xi) \rwhat{\rho(u-v)}(\tau) \rt\|_{L^2} d\tau\\
&=: \sfI_{31} + \sfI_{32} + \sfI_{33}.
\end{aligned}\]
Then still the same estimates imply
\[
\sfI_{31} \le C\e_1^2 t^{-\frac d4 - \frac k2} + C\e_1 t^{-\frac d4 - \frac k2} \md^0(t).
\]
For $\sfI_{32}$, the estimates for $k\le 2$ are the same as before. When $k>2$ and $t>1$, we obtain
\[\begin{aligned}
\sfI_{32}&\le C\int_{t/2}^{t-1/4} (t-\tau)^{-\frac k2}\|(\rho(u-v))(\tau)\|_{L^2}\,d\tau + C\int_{t-1/4}^t \|(\rho(u-v))(\tau)\|_{L^2}\,d\tau\\
&\le   C\int_{t/2}^{t-1/4} (t-\tau)^{-\frac k2}\| \rho(\tau)\|_{L^2}\|(u-v)(\tau)\|_{L^\infty}\,d\tau + C\int_{t-1/4}^t \|\rho(\tau)\|_{L^2}\|(u-v)(\tau)\|_{L^\infty}\,d\tau\\
&\le C\e_1 t^{-\frac d4 - \frac k2} \tilde{\md}(t),
\end{aligned}\] 
and when $k>2$ and $t\le 1$, one uses $s > m-2 \ge \frac d2$ and Theorem \ref{main_thm1} to get
\[\begin{aligned}
\sfI_{32}&\le C\int_{t/2}^t \|(\rho(u-v))(\tau)\|_{L^2}\,d\tau \le C\int_{t/2}^t \|\rho(\tau)\|_{L^2}\|(u-v)(\tau)\|_{L^\infty}\,d\tau \le C\e_1^2 t\le C\e_1^2 t^{-\frac d4 -\frac k2}.
\end{aligned}\]
In either case, we have
\[
\sfI_{32} \le C\e_1^2 t^{-\frac d4 - \frac k2} + C\e_1 t^{-\frac d4 - \frac k2}\lt(\md(t) + \tilde{\md}(t) \rt).
\]
For $\sfI_{33}$, we can also use the same estimates as in the proof of Lemma \ref{u_vel} when $k< \frac d2+2$. When $k=\frac d2+2$, we choose $\eta\in(0,1)$ satisfying $s>\frac d2 +\eta$ so that
\[\begin{aligned}
\sfI_{33} &\le C\int_{t/2}^t e^{-\frac {(t-\tau)}{2}}(t-\tau)^{-1+\frac \eta2} \|\nabla^{\frac d2+\eta}(\rho(u-v))(\tau)\|_{L^2}\,d\tau\\
&\le C\int_{t/2}^t e^{-\frac {(t-\tau)}{2}}(t-\tau)^{-1+\frac \eta2} \lt(\|\nabla^{\frac d2+\eta}\rho(\tau)\|_{L^2}\|(u-v)(\tau)\|_{L^\infty} + \|\rho(\tau)\|_{L^\infty}\|\nabla^{\frac d2+\eta}(u-v)(\tau)\|_{L^2} \rt) d\tau\\
&\le C\int_{t/2}^t e^{-\frac {(t-\tau)}{2}}(t-\tau)^{-1+\frac \eta2}\|\rho(\tau)\|_{H^s} \lt(\|(u-v)(\tau)\|_{L^\infty} + \|\nabla^{\frac d2}(u-v)(\tau)\|_{L^2} \rt.\cr
&\hspace{8cm} \lt. + \|\nabla^{\frac d2+2} u(\tau)\|_{L^2} + \|\nabla^{\frac d2+2} v(\tau)\|_{L^2} \rt) d\tau\\
&\le C\e_1 t^{-\frac d2 -1} \lt(\tilde{\md}(t) + \md^{\frac d2}(t) + \mm_u^{\frac d2+2}(t) + \mm_v^{\frac d2+2}(t) \rt),
\end{aligned}\]
and hence, in either case, we have
\[
\sfI_{33} \le C\e_1^2 t^{-\frac d4 -\frac k2} + C\e_1t^{-\frac d4 - \frac k2}\lt(\tilde{\md}(t) + \md(t) + \mm_u(t) + \mm_v(t)  \rt).
\]
Thus, we gather all the estimates to yield the desired relation.

\subsection{Proof of Lemma \ref{v_vel2} when $m\geq \frac d2 + 2$}\label{app_lem2} We recall from the proof of Lemma \ref{diff_temp2} that
\begin{align*}
\||\xi|^\theta \rwhat{(u-v)}(t)\|_{L^2} &\le Ce^{-t} \||\xi|^\theta \rwhat{(u_0-v_0)}\|_{L^2} + C\int_0^t e^{-(t-\tau)}\| |\xi|^\theta \rwhat{ (u\cdot \nabla) u}(\tau)\|_{L^2}\,d\tau\\
&\quad + C\int_0^t e^{-(t-\tau)}\||\xi|^\theta \rwhat{(v\cdot \nabla) v}(\tau)\|_{L^2}\,d\tau + \int_0^t e^{-(t-\tau)}\||\xi|^{\theta+2} \hat v(\tau)\|_{L^2}\,d\tau\\
&\quad + C\int_0^t e^{-(t-\tau)} \||\xi|^\theta \rwhat{\rho(u-v)}(\tau)\|_{L^2}\,d\tau\\
&=:  Ce^{-t} \||\xi|^\theta \rwhat{(u_0-v_0)}\|_{L^2} + \sum_{i=1}^4 \sfJ_i. 
\end{align*}
For $\sfJ_1$, $\sfJ_2$ and $\sfJ_3$, we can still obtain the same bound estimates:
\[\begin{aligned}
\sfJ_1 &\le C\e_1^2 t^{-\frac d4 -\frac \theta2 -1} + Ct^{-\frac d4 -\frac \theta2 -1}[\mm_u(t)]^2,\\
\sfJ_2 &\le C\e_1^2 t^{-\frac d4 -\frac \theta2 -1} + Ct^{-\frac d4 -\frac \theta2 -1}[\mm_v(t)]^2, \quad \mbox{and}\\
\sfJ_3 &\le C\e_1t^{-\frac d4 - \frac\theta2 -1} + Ct^{-\frac d4 -\frac\theta2 -1}\mm_v(t).
\end{aligned}\]
For $\sfJ_4$, when $t>1$, we use $s>m-2\ge \frac d2$ to get
\[
\begin{aligned}
\sfJ_4 &\le C\int_0^t e^{-(t-\tau)}\lt( \|\nabla^\theta \rho(\tau)\|_{L^2}\|(u-v)(\tau)\|_{L^\infty} + \|\rho(\tau)\|_{L^\infty}\|\nabla^\theta(u-v)(\tau)\|_{L^2}\rt) d\tau\\
&\le C\e_1 \lt(\int_0^{t/2} + \int_{t/2}^t \rt) e^{-(t-\tau)}\lt( \|(u-v)(\tau)\|_{L^\infty} + \|\nabla^\theta(u-v)(\tau)\|_{L^2}\rt) d\tau\\
&\le C\e_1^2 e^{-\frac t2} + C\e_1 t^{-\frac d4 - \frac\theta2 -1}\lt( \tilde{\md}(t) + \md^\theta(t)\rt),
\end{aligned}
\]
and when $t\le 1$,
\[
\begin{aligned}
\sfJ_4 &\le C\int_0^t e^{-(t-\tau)}\lt( \|\nabla^\theta \rho(\tau)\|_{L^2}\|(u-v)(\tau)\|_{L^\infty} + \|\rho(\tau)\|_{L^\infty}\|\nabla^\theta(u-v)(\tau)\|_{L^2}\rt) d\tau\\
&\le C\e_1^2 \le C\e_1^2 t^{-\frac d4 - \frac\theta2 -1}. 
\end{aligned}
\]
In either case, we have
\[
\sfJ_4 \le C\e_1^2 t^{-\frac d4 -\frac\theta2 -1} + C\e_1 t^{-\frac d4 -\frac \theta2-1}\lt( \tilde{\md}(t) + \md(t)\rt).
\]
Thus, we combine all the estimates for $\sfJ_i$'s to conclude the desired result.

%
%
%
%
%
%
%
%

\end{document}